\documentclass{amsart}

\usepackage{amssymb,amsmath,esint}
\usepackage[all]{xy}
\usepackage{graphicx}
\usepackage{enumerate}

\usepackage{colortbl}
\usepackage{cite}
\usepackage{xcolor}

\usepackage{amsrefs}

\usepackage{verbatim}

\usepackage{geometry}
 \geometry{
 left=25mm,
 right=25mm,
 top=20mm,
 }

\usepackage{comment}
\usepackage{graphicx,caption,subcaption}
\usepackage{tikz}
\usepackage{multirow}


\usepackage[title,titletoc]{appendix}

\colorlet{linkequation}{blue}
\usepackage[colorlinks,plainpages=true,pdfpagelabels,hypertexnames=true,colorlinks=true,pdfstartview=FitV,linkcolor=blue,citecolor=red,urlcolor=black]{hyperref}
\usepackage{hyperref}

\PassOptionsToPackage{unicode}{hyperref}
\PassOptionsToPackage{naturalnames}{hyperref}



\definecolor{dgreen}{rgb}{0,0.5,0}
\definecolor{violet}{rgb}{0.5,0,0.5}
\definecolor{dred}{rgb}{0.7,0,0}
\definecolor{ddred}{rgb}{0.5,0,0}
\definecolor{dblue}{rgb}{0,0,0.5}
\definecolor{ddblue}{rgb}{0,0,0.3}
\definecolor{llgray}{rgb}{0.9,0.9,0.9}
\definecolor{lgray}{rgb}{0.7,0.7,0.7}
\definecolor{dgray}{rgb}{0.2,0.2,0.2}


\newtheorem{defn}{Definition}[section]
\newtheorem{definition}[defn]{Definition}
\newtheorem{lemma}[defn]{Lemma}

\newtheorem{theorem}[defn]{Theorem}

\newtheorem{remark}[defn]{Remark}
\newtheorem{example}[defn]{Example}
\numberwithin{equation}{section}

\newcommand{\bq}{\begin{equation}}
\newcommand{\eq}{\end{equation}}

\newcommand{\R}{{ \mathbb{R}  }}

\newcommand{\abs}[1]{\left| #1 \right|}





\bibliographystyle{amsplain}

\AtBeginDocument{%
   \def\MR#1{}
}
\nocite{*}

\begin{document}

\title[Existence of PME with measure and drifts]{Existence of weak solutions for nonlinear drift-diffusion equations with measure data}
%

\author[S. Hwang]{Sukjung Hwang}
\address{S. Hwang: Department of Mathematics Education, Chungbuk National University, Cheongju 28644, Republic of Korea}
\email{sukjungh@cbnu.ac.kr}

\author[K. Kang]{Kyungkeun Kang}
\address{K. Kang: Department of Mathematics, Yonsei University,  Seoul 03722, Republic of Korea}
\email{kkang@yonsei.ac.kr}

\author[H.K. Kim]{Hwa Kil Kim}
\address{H.K. Kim: Department of Mathematics Education, Hannam University, Daejeon 34430, Republic of Korea}
\email{hwakil@hnu.kr}

\author[J.-T. Park]{Jung-Tae Park}
\address{J.-T. Park: School of Liberal Arts, Korea University of Technology and Education, Cheonan 31253, Republic of Korea; and Korea Institute for Advanced Study, Seoul 02455, Republic of Korea}
\email{jungtae.park@koreatech.ac.kr}

\thanks{
S. Hwang's work is partially supported by RS-2025-24523482. K. Kang is supported by NRF grant nos. RS-2024-00336346 and RS-2024-00406821. H. Kim's work is partially supported by NRF-2021R1F1A1048231. J.-T. Park is supported by Education and Research promotion program of KOREATECH in 2024 and RS-2025-25433258.}

\date{May 28, 2026}

\makeatletter
\@namedef{subjclassname@2020}{%
  \textup{2020} Mathematics Subject Classification}
\makeatother
\subjclass[2020]{35A01, 35K55, 35R06}

\keywords{porous medium equation, fast diffusion equation, weak solution, drift term, measure data}


\begin{abstract}
We consider nonlinear drift-diffusion equations (both porous medium equations and fast diffusion equations) with measure data. We establish the existence of nonnegative weak solutions satisfying gradient estimates, provided that the drift term belongs to a sub-scaling class relevant to the $L^1$ space. When the drift is divergence-free, this requirement can be relaxed: the drift may belong to a class that is supercritical with respect to $L^1$-scaling class, and the admissible range of the diffusion exponent $m$ is enlarged as well. By handling both the measure data and the drift, we obtain a new type of energy estimate. We also discuss sharpness by constructing counterexamples showing that the general-drift range cannot be improved under the corresponding integrability scale without the divergence-free cancellation. As an application, we construct weak solutions for a specific type of nonlinear diffusion equation with measure data coupled to the incompressible Navier-Stokes equations.
\end{abstract}
\maketitle




\section{Introduction}
In this paper, we study the following form of nonlinear diffusion equations, which include a drift term in divergence form and measure data:
\begin{equation}\label{PME}
	u_t - \Delta u^m + \nabla \cdot \left( u \, V \right)= \mu, \ \text{ for } \  m>0 \ \text{ on } \ \Omega_{T},
\end{equation}
where \(\mu\) is a nonnegative finite Radon measure in $\Omega_T$, and \(\Omega_T := \Omega \times (0,T)\) for a bounded domain \(\Omega \subset \mathbb{R}^d\), \(d \geq 2\). We impose zero boundary conditions on both the initial and the lateral boundaries. We assume that $\mu$ is defined in $\mathbb{R}^{d+1}$ by considering the zero extension to $\mathbb{R}^{d+1}$.
The nonlinear diffusion equation is called the porous medium equation when $m>1$, heat equation when $m=1$, and fast diffusion equation when $0<m<1$. These equations are important to describe various natural and physical phenomena, see \cite{DK07, DGV12, Vaz06, Vaz07} and references therein.

Our main objective is to find sufficient conditions on the drift vector field
\(V\) which guarantee the existence of nonnegative weak solutions to
\eqref{PME}. The main feature of the problem is that it combines two
different sources of difficulty: the low regularity of the measure-valued
forcing term and the presence of the drift term. Each of these difficulties
has been studied separately, but their simultaneous presence changes the
compactness mechanism in an essential way.

We first recall the drift-diffusion problem without measure data. If
\(\mu=0\), then \eqref{PME} reduces to
\begin{equation}\label{PME_drift}
	u_t-\Delta u^m+\nabla\cdot(uV)=0.
\end{equation}
This type of equation appears in drift-diffusion models, including
Keller--Segel-type systems
\cite{BedHe2017,Freitag,KK-SIMA,Win15,CCY2019}. Critical and subcritical
conditions on \(V\) also play an important role in fluid dynamics and related
problems; see \cite{SVZ13,SSSZ,Z11}. Various existence and regularity results
for weak solutions have been obtained under suitable assumptions on \(V\);
see, for instance,
\cites{KZ18,KZ2021,HZ21,HKK01,HKK02,HKK_FDE}.

When a measure-valued forcing term is present, however, the approaches used
for \eqref{PME_drift} are no longer directly applicable. Standard energy
estimates cannot be used in the same form, and one has to work with
truncated energy estimates adapted to finite Radon measures. As a
consequence, the admissible classes of drift fields become more restrictive
than in the case without measure data; see Remarks~\ref{R:T:PME} and
\ref{R:T:divfree}.

On the other hand, when the drift is absent, there is a well-developed
existence theory for nonlinear diffusion equations with measure-valued
right-hand sides. Lukkari first proved existence results for the model
equation
\(
u_t-\Delta u^m=\mu
\)
in \cite{Luk10} for the porous medium case \(m>1\), and in \cite{Luk12} for
the fast diffusion case
\(
\left(1-\frac{2}{d}\right)_+<m<1.
\)
We also refer to \cite{KLLP19}. Later, B\"ogelein, Duzaar, and Gianazza
extended these results to more general nonlinear diffusion problems; see
\cite{BDG13} for the degenerate case \(m>1\), and \cite{BDG15} for the
singular case
\(
\left(1-\frac{2}{d}\right)_+<m<1.
\)
Further boundedness and continuity results via parabolic Riesz potentials
can be found in \cite{BDG13,BDG14,LS13,Stu15} for \(m>1\), and in
\cite{BDG15,BDG16} for
\(
\left(1-\frac{2}{d}\right)_+<m<1.
\)

The lower bound
\(
m>\left(1-\frac{2}{d}\right)_+
\)
is not merely technical in the measure data setting. Even in the drift-free case \(V=0\), Barenblatt solutions show that this threshold is intrinsic to nonlinear diffusion equations with finite-measure right-hand sides. More precisely, the Barenblatt solution associated with
\(
\partial_t u-\Delta u^m=\delta_{(0,0)},
\)
where \(\delta_{(0,0)}\) denotes the Dirac mass at the space-time origin, is available as a solution precisely in the range corresponding
to
\(
m>\left(1-\frac{2}{d}\right)_+;
\)
see \cite{BDG13,BDG15}. Thus the lower bound appearing in the measure data theory is already optimal in the absence of drift.

The equation studied in the present paper lies at the interface of the two theories above. It contains both a divergence-form drift term and a measure-valued forcing term. Therefore, it is not a straightforward extension of either the drift-diffusion theory without measures or the measure data theory without drift. In the general drift case, the local truncation estimates contain additional terms involving the product \(uV\). These terms cannot be controlled by the measure-data compactness method alone, and the compactness methods of \cite{BDG13,BDG15} cannot be applied directly.

For this reason, in the general drift case we use a direct compactness argument based on estimates for \(u\). This argument naturally leads to the range
\[
1-\frac{1}{d}<m\le2.
\]
The lower bound on \(m\) provides enough Sobolev compactness to apply the Aubin--Lions lemma. The upper restriction \(m\le2\) is connected with the same direct compactness mechanism: when one tries to recover compactness of \(u\) from estimates on \(\nabla u^{\frac{m}{2}}\), the factor
\(
u^{1-\frac{m}{2}}
\)
appears, and this factor becomes singular near the zero set when \(m>2\). Under suitable assumptions on \(V\), this gives the existence result stated in Theorem~\ref{T:PME}.

We also show that the lower threshold
\(
m>1-\frac{1}{d}
\)
is sharp for general, non-divergence-free drifts. In
Section~\ref{S:sharpness-divfree}, we construct, in dimension \(d=2\), a
non-divergence-free drift field satisfying the same isotropic integrability
scale as in the divergence-free theory, together with a nonnegative finite
Radon measure on the right-hand side, for which the gradient estimate fails
when
\(
\left(1-\frac{2}{d}\right)_+<m\le 1-\frac{1}{d}.
\)
Thus, below the general-drift threshold, the estimates obtained in the
divergence-free case cannot be expected for arbitrary drift fields under the
same integrability assumptions on \(V\).

The same section also records a further obstruction in the highly degenerate range \(m>2\). It shows that without the divergence-free structure, a general drift may cancel the diffusion term and destroy the gradient estimate for \(u^{\frac{m}{2}}\), even when the drift belongs to an admissible isotropic integrability scale. This illustrates why the upper restriction \(m\le2\) appears naturally in the direct compactness argument for general drifts, and why the divergence-free cancellation is essential in the range \(m>2\) as well.

The situation improves substantially when \(V\) is divergence-free. In this case, the drift term has a cancellation structure in the local truncation estimates. More precisely, the troublesome drift contribution can be reduced to a lower-order cutoff term, which can be controlled locally. This allows us to adapt the compactness methods of \cite{BDG13,BDG15} to the present equation with divergence-free drift. Consequently, the assumptions on \(V\) are relaxed, and the lower bound on \(m\) is improved to
\(
m>\left(1-\frac{2}{d}\right)_+;
\)
see Theorem~\ref{T:divfree}. This distinction between the general drift case and the divergence-free case is one of the main contributions of the paper.

In this way, the two lower bounds have different meanings. The lower bound
\(
m>\left(1-\frac{2}{d}\right)_+
\)
comes from the measure data problem itself, already in the drift-free case. On the other hand, the stricter condition
\(
m>1-\frac{1}{d}
\)
is sharp for general drift fields. The divergence-free condition is precisely the structural assumption that restores the optimal measure-data range in the presence of a drift term. Moreover, in the highly
degenerate range \(m>2\), the divergence-free cancellation also prevents the
drift from destroying the gradient estimate by canceling the diffusion.

As an application of our existence theory, we study a simplified Keller--Segel--fluid type system with nonlinear diffusion and measure data. In this application, the velocity field is divergence-free, and our divergence-free existence theorem applies naturally. We construct weak solutions satisfying a global energy inequality; see Section~\ref{S:Application}.

We now describe the organization of the paper. In Section~\ref{S: Main}, we state the main existence theorems, classified according to the nonlinear diffusion regime and the assumptions on the drift field. In Section~\ref{S:Preliminaries}, we recall preliminary results used throughout the paper. A priori estimates are derived in Section~\ref{S:apriori-est}. The existence of weak solutions is proved in Section~\ref{S:Exist}. In Section~\ref{S:sharpness-divfree}, we show the sharpness of the general drift threshold by constructing counterexamples in dimension \(d=2\). Finally, in Section~\ref{S:Application}, we study the application to a nonlinear diffusion equation with measure data coupled to a fluid equation.

\subsection{Main results}\label{S: Main}

Before stating our main existence results, we introduce the notion of weak solutions to \eqref{PME} and the classes of vector fields $V$ used in the construction. We then classify the existence results according to the diffusion regime and the assumptions on the drift.

\begin{definition}\label{D:WS}
Let $m>0$, $d\geq 2$, and $V:\Omega_{T} \to \mathbb{R}^d$ be a measurable vector field. We say that $u$ is a nonnegative weak solution of \eqref{PME} in $\Omega_{T}$ with zero initial data and zero lateral boundary data if the following hold:
\begin{enumerate}[(i)]
  \item It holds that
	\[
	u,\ \nabla u^m,\ uV \in L_{x,t}^{1}(\Omega_T).
	\]
  \item For every $\varphi \in C^{\infty} \left(\overline{\Omega} \times [0, T]\right)$ which vanishes on $\partial \Omega \times (0,T)$ and $\varphi(\cdot,T)=0$, we have
	\[
	\int_{\Omega_{T}} \left\{ - u \varphi_t + \nabla u^m \cdot\nabla \varphi - u \, V\cdot \nabla \varphi \right\} \,dxdt = \int_{\Omega_{T}} \varphi \,d\mu.
\]
\end{enumerate}
\end{definition}

We now introduce the suitable function classes for \(V\) that will be employed in the construction of weak solutions. In the divergence-free case, we adopt less restrictive classes.
\begin{definition}\label{Def:V}
For $m>0$ and $d\ge 2$, we define the following spaces, for $q_1, \, q_2 \in [1, \infty]$:
\begin{enumerate}[(i)]
  \item Let $m > 1-\frac{1}{d}$. The scaling invariant class is defined as
	\begin{equation}\label{S}
		\mathcal{S}_{m}^{(q_1, q_2)} := \left\{ V: \ \|V\|_{L_{x,t}^{q_1, q_2}(\Omega_T)} < \infty, \text{ where } \frac{d}{q_1} + \frac{2+d(m-1)}{q_2} = 1+ d(m-1)\right\},
	\end{equation}
	and the scaling subclass is defined as
	\begin{equation}\label{sub_S}
		\mathfrak{S}_{m}^{(q_1, q_2)} := \left\{ V: \ \|V\|_{L_{x,t}^{q_1, q_2}(\Omega_T)} < \infty, \text{ where } \frac{d}{q_1} + \frac{2+d(m-1)}{q_2} < 1+ d(m-1)\right\}.
	\end{equation}
	 Moreover, let us denote $\|V\|_{\mathcal{S}_{m}^{(q_1,q_2)}}$ and $\|V\|_{\mathfrak{S}_{m}^{(q_1, q_2)}}$ as the norms corresponding to each space.
  \item Let $m > (1-\frac{2}{d})_{+}$. The $\sigma$-class is defined as
	\begin{equation}\label{S_divfree}
		\mathcal{S}_{m,\sigma}^{(q_1, q_2)} := \left\{ V: \ \nabla\cdot V = 0 \ \text{ and } \ \|V\|_{L_{x,t}^{q_1, q_2}(\Omega_T)} < \infty, \text{ where } \frac{d}{q_1} + \frac{2+d(m-1)}{q_2} = 2+ d(m-1)\right\},
	\end{equation}
	and the $\sigma$-subclass is defined as
	\begin{equation}\label{sub_S_divfree}
		\mathfrak{S}_{m,\sigma}^{(q_1, q_2)} := \left\{ V: \ \nabla\cdot V = 0 \ \text{ and } \ \|V\|_{L_{x,t}^{q_1, q_2}(\Omega_T)} < \infty, \text{ where } \frac{d}{q_1} + \frac{2+d(m-1)}{q_2} < 2+ d(m-1)\right\}.
	\end{equation}
	Moreover, let us denote $\|V\|_{\mathcal{S}_{m, \sigma}^{(q_1, q_2)}}$ and $\|V\|_{\mathfrak{S}_{m,\sigma}^{(q_1, q_2)}}$ as the norms corresponding to each space.
\end{enumerate}
\end{definition}

Let us remark on the classes of \(V\) introduced in Definition~\ref{Def:V}:
\begin{remark}
\begin{enumerate}[(i)]
  \item The class \(\mathcal{S}_{m}^{(q_1, q_2)}\) arises from the \(L^1\)-scaling invariance of \eqref{PME}, see $(1.3)$ of \cite[Definition~1.1]{HKK01}, when \(m > 1 - \tfrac{1}{d}\), under the scaling structure
    \[
    u_{r}(x,t) = r^{d} \, u\left(rx, r^{2 + d(m-1)}t\right),
    \quad
    V_{r}(x,t) = r^{1 + d(m-1)} \, V\left(rx, r^{2 + d(m-1)}t\right).
    \]
  \item The \(\sigma\)-class \(\mathcal{S}_{m,\sigma}^{(q_1,q_2)}\) is similarly tied to the \(L^1\)-scaling invariance of \eqref{PME} as in $(1.4)$ of \cite[Definition~1.1]{HKK01}. Moreover, for the classes \(\mathcal{S}_{m,\sigma}^{(q_1,q_2)}\) and \(\mathfrak{S}_{m,\sigma}^{(q_1,q_2)}\), one needs \(m > \left(1-\tfrac{2}{d}\right)_+\) to ensure \(2 + d(m-1) > 0\).
\end{enumerate}
\end{remark}

First, we establish existence results for $1 - \frac{1}{d} < m \leq 2$.
We state the following theorem separately for the porous medium case $1\leq m \leq 2$, and the fast diffusion case $1-\frac{1}{d}< m <1$, respectively, assuming the appropriate functional classes for $V$.

\begin{theorem}\label{T:PME}
Let $m>0$, $d\geq 2$, and $V:\Omega_{T} \to \mathbb{R}^d$ be a measurable vector field. We recall that $\mathcal{S}_{m}^{(q_1,q_2)}$ and $\mathfrak{S}_{m}^{(q_1,q_2)}$ are defined in \eqref{S} and \eqref{sub_S}, respectively.
\begin{enumerate}[(i)] 
  \item (Porous medium case) If $1\leq m < 2$, assume that either
\begin{equation}\label{T:V:PME}
\begin{gathered}
	V \in \mathfrak{S}_{m}^{(q_1, q_2)} \ \text{ for } \
		 q_1 > \frac{md}{(2-m)+d(m-1)} \ \text{ and } \ q_2 \geq 2,	\\
\text{or } \quad V \in \mathcal{S}_{m}^{(q_1,q_2)} \ \text{ for } \ 
\begin{cases}
   (q_1, q_2) = (\frac{2}{m-1}, 2), \ & \text{ if }\ 1<m < 2 \\
  (q_1, q_2) = (\infty, 2), & \text{ if } \ m=1.
\end{cases}
\end{gathered}
\end{equation}

If $m=2$, assume that either
\begin{equation}\label{T:V:PME_2}
	V \in \mathfrak{S}_{2}^{(q_1, q_2)} \ \text{ for } \
		 q_1 \geq 2 \ \text{ and } \ q_2 \geq 2,	
\ \text{ or } \  V \in \mathcal{S}_{2}^{(2,2)}.
\end{equation}
Then for any $\alpha \in (0,2)$, there exists a nonnegative weak solution $u$ of \eqref{PME} such that
\begin{equation}\label{T:Energy}
 \int_{\Omega_{T}} \left|\nabla u^{\frac{m}{2}}\right|^{\alpha} \,dxdt\leq  C\left(\alpha, m, d, |\Omega_T|, \|V\|_{L_{x,t}^{q_1,q_2}}, \mu(\Omega_{T})\right).
\end{equation}
  \item (Fast diffusion case) Let $1-\frac{1}{d} < m <1$. Assume that
\begin{equation}\label{T:V:FDE}
	V \in \mathfrak{S}_{m}^{(q_1, q_2)} \ \text{ for } \ q_1 > \frac{d}{1+d(m-1)} \ \text{ and } \ q_2>\frac{2+d(m-1)}{1+d(m-1)}.
\end{equation}
Then for any $\alpha \in (0,2)$, there exists a nonnegative weak solution $u$ of \eqref{PME} such that \eqref{T:Energy} holds.
\end{enumerate}
\end{theorem}

\begin{figure}
\centering

\begin{tikzpicture}[domain=0:16]

\fill[fill= gray]
(0, 4)--(0.9, 4)-- (3.5,1.1) -- (3.5, 0)--(0, 0) ;


\draw[->] (0,0) node[left] {\scriptsize $O$} -- (6,0) node[right] {\scriptsize $\frac{1}{q_1}$};
\draw[->] (0,0) -- (0,6) node[left] { \scriptsize $\frac{1}{q_2}$};

\draw[thick, dotted] (0,5) node{\scriptsize $\times$} node[left] {\scriptsize $\frac{1}{2}+\frac{d(m-1)}{2[2+d(m-1)]}$} -- (4.5, 0) node {\scriptsize $\times$} node[below] {\scriptsize $\frac{1+d(m-1)}{d}$} ;
\draw (2.4, 2.4) node[right] {\scriptsize $\mathcal{S}_{m}^{(q_1, q_2)}$} ;
\draw[thick] (3.5,1.1)  circle(0.05) node[right]{\scriptsize $B$};
\draw[thick] (3.5,0)  circle(0.05) node[below]{\scriptsize $B'$};
\draw[dotted] (3.5,1.1) -- (3.5, 0);
\draw[dotted] (3.5,1.1) -- (0, 1.1) node{\scriptsize $\times$} node[left]{\scriptsize $\frac{m-1}{m}$};

\draw (0,4) -- (2.8,0) node{\scriptsize $+$} node[below]{\scriptsize $\frac{1}{d}$};
\draw (2.4, 0.6) node[right] {\scriptsize $\mathcal{S}_{1}^{(q_1, q_2)}$} ;

\draw[thick] (0.9,4) node{\scriptsize $\bullet$} node[right] {\scriptsize $A=(\frac{m-1}{2}, \frac{1}{2})$};
\draw[thick] (0,4) node{\scriptsize $\bullet$} node[left] {\scriptsize $A'$};
\draw[thick] (0.9,0) node{\scriptsize $\bullet$} node[below] {\scriptsize $A''$};
\draw[dotted] (0.9, 0) -- (0.9, 4);
\draw (0, 4) -- (0.9, 4);

\draw (5,5) node[right]{\scriptsize $\bullet$ The shaded region $\mathcal{R}(OA'ABB')$, excluding segments $\overline{AB}$ and $\overline{BB'}$,};
\draw (5.5,4.5) node[right]{\scriptsize belongs to $\mathfrak{S}_{m}^{(q_1,q_2)}$ and satisfies \eqref{T:V:PME}.};
\draw (5,4) node[right]{\scriptsize $\bullet$ Point $A$ on $\mathcal{S}_{m}^{(q_1, q_2)}$ satisfies \eqref{T:V:PME}.};

\draw (5,3) node[right]{\scriptsize $\bullet$ On $\mathcal{S}_{m}^{(q_1, q_2)}$: $\frac{d}{q_1} + \frac{2+d(m-1)}{q_2}=1+d(m-1)$,} ;
\draw (5.5,2.5) node[right]{\scriptsize where $A = (\frac{m-1}{2}, \frac{1}{2})$ and $B= (\frac{(2-m)+d(m-1)}{md}, \frac{m-1}{m})$.} ;
\draw (5,2) node[right]{\scriptsize $\bullet$ When $m=2$, $A=B$; thus $\mathcal{R}(OA'AA'')$ is the valid region for \eqref{T:V:PME_2}.} ;

\draw (5,1.5) node[right]{\scriptsize $\bullet$ On $\mathcal{S}_{1}^{(q_1, q_2)}$: $\frac{d}{q_1} + \frac{2}{q_2}=1$ and $A' = (0, \frac{1}{2})$.} ;
\draw (5,1) node[right]{\scriptsize $\bullet$ In the limit $m\to 1$, $B' \to (\frac{1}{d}, 0)$ and $A \to A'$.} ;

\end{tikzpicture}
\caption{\footnotesize  The condition on $V$ in Theorem~\ref{T:PME} (i) when $1\leq m \leq 2$.}
\label{F:PME01}
\end{figure}

\begin{figure}
\centering

\begin{tikzpicture}[domain=0:16]

\fill[fill= gray](0,3.6)-- (0,0) -- (2.5,0);

\draw[->] (0,0)node[left] {\scriptsize $O$} -- (5.5,0) node[right] {\scriptsize $\frac{1}{q_1}$};
\draw[->] (0,0) -- (0,5.5) node[left] { \scriptsize $\frac{1}{q_2}$};


\draw[gray] (0,5) node{\scriptsize $+$} node[left] {\scriptsize $\frac{1}{2}$}
-- (4.5, 0) node {\scriptsize $+$} node[below] {\scriptsize $\frac{1}{d}$} ;
\draw[gray] (4.2, 1) node {\scriptsize $\mathcal{S}_{1}^{(q_1,q_2)}$} ;

\draw[thick, dotted] (0,3.6)--(2.5, 0) ;
 \draw[thick] (0,3.6)  circle(0.05) node[left]{\scriptsize $A$};
 \draw[thick] (2.5,0)  circle(0.05) node[below]{\scriptsize $B$};
\draw (1.8, 1) node[right]{\scriptsize $\mathcal{S}_{m}^{(q_1, q_2)}$};


\draw(5,5) node[right]{\scriptsize $\bullet$ The shaded region $\mathcal{R}(OAB)$, excluding the segment $\overline{AB}$, };
\draw(5.5,4.5) node[right]{\scriptsize belongs to $\mathfrak{S}_{m}^{(q_1,q_2)}$ and satisfies \eqref{T:V:FDE}. };

\draw(5,3.5) node[right]{\scriptsize $\bullet$ On $\mathcal{S}_{m}^{(q_1, q_2)}$: $\frac{d}{q_1} + \frac{2+d(m-1)}{q_2}=1+d(m-1),$};
\draw(5.5,3) node[right]{\scriptsize where $A=(0,\frac{1}{2}+\frac{d(m-1)}{2[2+d(m-1)]})$ and $B=(\frac{1+d(m-1)}{d},0)$.};
\draw(5,2.5) node[right]{\scriptsize $\bullet$ In the limit as $m \to 1$, $A \to (0,\frac{1}{2})$ and $B \to (\frac{1}{d},0)$. };
\draw(5,2) node[right]{\scriptsize $\bullet$ In the limit as $m \to 1-\frac{1}{d}$, both $A$ and $B$ approach the origin. };

\end{tikzpicture}
\caption{\footnotesize  The condition on V in Theorem~\ref{T:PME} (ii) when $1-\frac{1}{d} < m < 1$.}
\label{F:FDE01}
\end{figure}

Here are a few remarks concerning the above theorem.


\begin{remark}\label{m=1}
Theorem~\ref{T:PME} also holds for the linear diffusion case ($m=1$). While this is likely known to experts, we found no explicit reference. For $\mu=0$, weak solutions are known to exist whenever $V\in\mathcal{S}_{1}^{(q_{1},q_{2})}$. In the presence of measure data, our result covers the entire subcritical class $\mathfrak{S}_{1}^{(q_{1},q_{2})}$ and the single endpoint $(q_{1},q_{2})=(\infty,2)$ of the critical line $\mathcal{S}_{1}^{(q_1,q_2)}$. The rest of $\mathcal{S}_{1}^{(q_{1},q_{2})}$, including the opposite endpoint $(d,\infty)$, remains uncovered. We conjecture this restriction is optimal, leaving the remaining critical cases as an open problem.
\end{remark}

\begin{remark}\label{R:T:PME}
\begin{enumerate}[(i)]
    \item
    In \cite{HKK02,HKK_FDE}, the equation \eqref{PME_drift} (without measure data) was investigated on a bounded domain \(\Omega_{T}\) subject to no-flux boundary conditions.
    When the initial data \(\rho_0\) satisfies $\int_{\Omega} \rho_0 \log \rho_0 \, dx < \infty$,
    \(L^1\)-weak solutions were constructed under similar conditions on \(V\) (see \cite[Theorem~2.4]{HKK02} for \(1 \le m \le 2\), and \cite[Theorem~2.5]{HKK_FDE} for \(1 - \tfrac{1}{d} < m < 1\)).
    Unlike the strict inequalities required by \eqref{T:V:PME} and \eqref{T:V:FDE}, those earlier results allow equalities in both the structural class and the ranges of \(q_1\) and \(q_2\).
    
    \item
    The structure of \(V\) is illustrated in Fig.~\ref{F:PME01} for the porous medium case and Fig.~\ref{F:FDE01} for the fast diffusion case.
\end{enumerate}
\end{remark}

\begin{remark} 
The estimates in Theorem~\ref{T:PME} can be generalized as follows.
	Let $1\leq q \le \min\{m+1, 3-m\}$ and $d\geq 2$, suppose that $V$ satisfies either \eqref{T:V:PME} for $1\leq m \leq 2$ or \eqref{T:V:FDE} for $1-\frac{1}{d}< m < 1$. Then for any
	$0 < \alpha < \frac{2(2+md)}{2+d(m+q-1)}$, the following estimate holds
    \begin{equation}\label{Estimate05}
\int_{\Omega_T} \left|\nabla u^{\frac{m+q-1}{2}}\right|^{\alpha} \,dxdt
\leq  C\left(\alpha, m, d, q,  |\Omega_T|, \|V\|_{L_{x,t}^{q_1,q_2}}, \mu(\Omega_{T})\right).
	\end{equation}	
The above estimate can be obtained by following a similar argument as in the proof of Theorem~\ref{T:final_est}, based on \eqref{Estimate03} in Lemma~\ref{comparison estimates}.  When $q=1$, the result coincides with Theorem~\ref{T:PME}.
\end{remark}

In the divergence-free case, existence results can be obtained under significantly weaker assumptions on both \(V\) and \(m\).
This is because a priori estimates in Lemma~\ref{L:divfree:apriori}, which are essential for determining the suitable functional classes for the solution, are derived independently of \(V\).
However, due to the compactness argument in Lemma~\ref{time derivative}, an additional restriction on $V \in \mathcal{S}_{m,\sigma}^{(q_1,q_2)}$ is required, which belongs to the supercritical region of \(\mathcal{S}_{m}^{(q_1,q_2)}\).
Moreover, the following theorem extends the range of admissible \(m\) to $m > (1 - \frac{2}{d})_{+}$.

\begin{theorem}\label{T:divfree} (Divergence-free case) Let $m>0$, $d\geq 2$, and $V:\Omega_{T} \to \mathbb{R}^d$ be a measurable vector field with  $\nabla \cdot V =  0$ in the sense of distribution. We recall that $\mathcal{S}_{m,\sigma}^{(q_1,q_2)}$ and $\mathfrak{S}_{m,\sigma}^{(q_1,q_2)}$ are defined in \eqref{S_divfree} and \eqref{sub_S_divfree}, respectively.

\begin{enumerate}[(i)]
  \item (Porous medium case) Let $m \geq 1$. Furthermore, assume either
\begin{equation}\label{T:V:divfree:PME}
\begin{gathered}
	V\in \mathfrak{S}_{m,\sigma}^{(q_1, q_2)}\  \text{ for } \
	q_1 > \frac{md}{2+d(m-1)} \ \text{ and } \ q_2 > 1, \\
	\text{or } \quad V\in \mathcal{S}_{m,\sigma}^{(q_1, q_2)}\  \text{ for } \
	(q_1, q_2) = (\infty,1).
\end{gathered}
\end{equation}
Then for any $\alpha \in (0,2)$, there exists a nonnegative weak solution of \eqref{PME} such that
\begin{equation}\label{T:Energy:divfree}
\int_{\Omega_{T}} \left|\nabla u^{\frac{m}{2}}\right|^{\alpha} \,dxdt \leq C\left(\alpha, m, d, |\Omega_T|, \mu(\Omega_{T})\right).
\end{equation}
  \item (Fast diffusion case) Let $(1-\frac{2}{d})_{+} < m < 1$.  Furthermore, assume either
\begin{equation}\label{T:V:divfree:FDE}
\begin{gathered}
	V\in \mathfrak{S}_{m,\sigma}^{(q_1, q_2)}
	\ \text{ for } \ q_1 > \frac{d}{2+d(m-1)} \ \text{ and } \ q_2>1,\\
	\text{or } \quad V\in \mathcal{S}_{m,\sigma}^{(q_1, q_2)}\  \text{ for } \
	(q_1, q_2) = (\infty,1).	
\end{gathered}
\end{equation}
Then for any $\alpha \in (0,2)$, there exists a nonnegative weak solution of \eqref{PME} such that \eqref{T:Energy:divfree} holds.
\end{enumerate}	
\end{theorem}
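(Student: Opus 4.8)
The plan is to construct $u$ as a limit of solutions to regularized problems and to pass to the limit using the $V$-independent a priori estimates of Lemma~\ref{L:divfree:apriori} together with the compactness input of Lemma~\ref{time derivative}. First I would regularize on three levels simultaneously: mollify the measure $\mu$ to $\mu_\ep \in C_c^\infty(\Omega_T)$ with $\mu_\ep \ge 0$ and $\mu_\ep(\Omega_T) \le \mu(\Omega_T)$ and $\mu_\ep \to \mu$ weakly-$*$; truncate/smooth the drift to $V_\ep$ (for instance $V_\ep = V * \eta_\ep$, which preserves $\nabla \cdot V_\ep = 0$ and keeps the $L^{q_1,q_2}_{x,t}$ norm, or, in the subcritical case, truncate in modulus to stay in $L^\infty$); and regularize the nonlinearity by replacing $u^m$ with $(u+\ep)^m - \ep^m$ or by adding $\ep \Delta u$, so that each approximate problem
$$
\partial_t u_\ep - \Delta u_\ep^m + \nabla \cdot (u_\ep V_\ep) = \mu_\ep
$$
is a nondegenerate uniformly parabolic equation with smooth data, for which existence of a nonnegative smooth (or energy) solution $u_\ep$ is classical. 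The sign condition $u_\ep \ge 0$ follows from $\mu_\ep \ge 0$, the zero boundary and initial data, and the maximum principle (here the divergence-free hypothesis is convenient because $\nabla \cdot (u V_\ep) = V_\ep \cdot \nabla u$ is a genuine transport term with no zeroth-order part).

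Next I would extract uniform bounds. Since $\nabla \cdot V_\ep = 0$, testing with truncations of (powers of) $u_\ep$ makes the drift contribution vanish — this is precisely the mechanism by which Lemma~\ref{L:divfree:apriori} produces estimates \emph{independent of $V$}. Concretely, the conservation of mass $\int_\Omega u_\ep(\cdot,t)\,dx \le \mu(\Omega_T)$ and the truncated energy estimates give, for every $\alpha \in (0,2)$,
$$
\int_{\Omega_T} \left| \nabla u_\ep^{\frac m2} \right|^{\alpha} dx\,dt \le C\bigl(\alpha, m, d, |\Omega_T|, \mu(\Omega_T)\bigr),
$$
and correspondingly uniform control of $u_\ep$ in some $L^{p_0}_{x,t}$ and of $u_\ep^m$ in a space $L^1(0,T;W^{1,s})$ with $s>1$. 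The range $m > (1-\tfrac 2d)_+$ is what makes $2 + d(m-1) > 0$, hence the scaling exponents meaningful and the interpolation underlying these estimates valid. From the equation I then read off a uniform bound on $\partial_t u_\ep$ in a negative-order space; this is where the extra hypothesis $V \in \mathcal{S}_{m,\sigma}^{(q_1,q_2)}$ (supercritical relative to $\mathcal{S}_m^{(q_1,q_2)}$) enters, since to bound $\nabla \cdot (u_\ep V_\ep)$ in, say, $L^1(0,T;W^{-1,1})$ one needs $u_\ep V_\ep$ controlled in $L^1_{x,t}$, and Lemma~\ref{time derivative} supplies exactly the Hölder pairing between the a priori integrability of $u_\ep$ and the class of $V$ that closes this. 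With a spatial bound on $u_\ep^m$ and a temporal bound on $u_\ep$, an Aubin--Lions--Simon argument yields a subsequence with $u_\ep \to u$ strongly in $L^1_{x,t}$ (and a.e.), $u_\ep^m \to u^m$, $\nabla u_\ep^m \rightharpoonup \nabla u^m$ weakly in $L^1$, and $u_\ep^{m/2} \rightharpoonup u^{m/2}$ in the relevant reflexive space so that \eqref{T:Energy:divfree} passes to the limit by weak lower semicontinuity.

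Finally I would pass to the limit in the weak formulation. For a fixed test function $\varphi \in C_c^\infty(\overline\Omega \times [0,T))$ vanishing on $\partial\Omega$: the term $\int u_\ep \varphi_t$ converges by strong $L^1$ convergence of $u_\ep$; the term $\int \nabla u_\ep^m \cdot \nabla \varphi$ converges by weak $L^1$ convergence of $\nabla u_\ep^m$; the drift term $\int u_\ep V_\ep \cdot \nabla \varphi$ converges because $u_\ep \to u$ strongly in $L^{q_1'}_{x}L^{q_2'}_{t}$-type spaces while $V_\ep \to V$ (strongly, being a mollification) in the dual scale, a product-convergence fact already packaged in Lemma~\ref{time derivative}; and $\int \varphi\, d\mu_\ep \to \int \varphi\, d\mu$ by weak-$*$ convergence of the measures. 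One also checks $u, \nabla u^m, uV \in L^1_{x,t}$ so that Definition~\ref{D:WS}(i) holds, which follows from the same uniform bounds and Fatou. The porous-medium case $m \ge 1$ and the fast-diffusion case $(1-\tfrac 2d)_+ < m < 1$ are handled in parallel; the only differences are the precise interpolation exponents and the form of the nondegenerate regularization (one may keep $+\ep$ inside the nonlinearity in both cases). The subcritical alternatives in \eqref{T:V:divfree:PME}--\eqref{T:V:divfree:FDE} with $q_2>1$ follow a fortiori, since $\mathfrak{S}_{m,\sigma}^{(q_1,q_2)} \hookrightarrow$ a space to which the $\mathcal{S}_{m,\sigma}$ argument applies after using $|\Omega_T| < \infty$. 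I expect the main obstacle to be the drift term in the limit passage: ensuring that the a priori integrability of $u_\ep$ coming from the $V$-independent estimates is genuinely strong enough, in the exact supercritical range of $q_1,q_2$ stated, to pair with $V_\ep$ and give both the $\partial_t u_\ep$ bound needed for compactness and the convergence $u_\ep V_\ep \to uV$ — this is the content of Lemma~\ref{time derivative} and the reason the divergence-free theorem still cannot reach the full critical class $\mathcal{S}_m^{(q_1,q_2)}$.
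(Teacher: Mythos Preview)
Your proposal is correct and follows essentially the same approach as the paper: regularize the data to obtain a sequence of regular solutions, invoke the $V$-independent a priori estimates of Lemma~\ref{L:divfree:apriori} (packaged through Theorem~\ref{T:final_est}\,(iii)) for the uniform energy bound, use Lemma~\ref{time derivative} to control $\partial_t u_n$ in $L^1(0,T;W^{-1,1})$, apply Aubin--Lions for strong compactness, and pass to the limit. You have in fact spelled out more of the mechanics (the three-level regularization, the product convergence $u_\ep V_\ep \to uV$, the role of $|\Omega_T|<\infty$ for the subcritical embedding) than the paper itself, which refers to \cite{HKK01} for the approximate solutions and calls the remainder ``rather standard''; the only cosmetic difference is that the paper truncates $V$ to $L^\infty$ rather than mollifying, but this does not affect the argument.
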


\begin{figure}
\centering

\begin{tikzpicture}[domain=0:16]


\fill[fill= lgray] (0, 4.5)-- (0,0) --(4,0)-- (4,1.25) --(0, 4.5) ;

\fill[fill= gray] (0,4.5) --(0,0) --(1.5,0) ;

\draw[->] (0,0) node[left] {\scriptsize $O$} -- (7,0) node[right] {\scriptsize $\frac{1}{q_1}$};
\draw[->] (0,0) -- (0,6) node[left] { \scriptsize $\frac{1}{q_2}$};

\draw[thick, dotted] (0,4.5) node{\scriptsize $\bullet$} node[left] {\scriptsize $A$} -- (5.5, 0) ;
\draw (1.5, 3.3) node[right] {\scriptsize $\mathcal{S}_{m>1,\sigma}^{(q_1, q_2)}$} ;

\draw[thick] (4,1.25)  circle(0.05) node[right]{\scriptsize $B$};
\draw[thick] (4,0)  circle(0.05) node[below]{\scriptsize $B'$};
\draw[dotted] (4, 1.25) -- (4, 0);
\draw[dotted] (4, 1.25) -- (0, 1.25) node{\scriptsize $\times$} node[left]{\scriptsize $\frac{m-1}{m}$};

\draw[thick] (5.5,0)  circle(0.05) node[below]{\scriptsize $\frac{2+d(m-1)}{d}$};

\draw[very thin] (0, 4.5)  -- (3.2, 0) node{\scriptsize $\times$} node[below]{\scriptsize $\frac{2}{d}$};
\draw[dgray] (1.8, 2) node[right]{\scriptsize $\mathcal{S}_{1, \sigma}^{(q_1, q_2)}$};

\draw[thick, dotted] (0,4.5) -- (1.5, 0) ;
\draw[thick] (1.5,0)  circle(0.05) node[below]{\scriptsize $C$};
\draw (0.2, 0.3) node[right] {\scriptsize $\mathcal{S}_{m<1,\sigma}^{(q_1, q_2)}$} ;

\draw[dashed, very thick] (0, 3) node{$\ast$}node[left]{\scriptsize $\frac{1+d(m-1)}{2+d(m-1)}$} -- (2.4, 0)  node{$\ast$} node[below]{\scriptsize $\frac{1+d(m-1)}{d}$};
\draw (1.6, 1) node[right]{\scriptsize $\mathcal{S}_{m>1}^{(q_1, q_2)}$};


\draw (5,6.5) node[right]{\scriptsize $\star$ Porous medium case ($m \geq 1$): } ;
\draw (5.5,6) node[right]{\scriptsize $\bullet$ The line $\overline{AB}$ is $\mathcal{S}_{m,\sigma}^{(q_1,q_2)}$: $\frac{d}{q_1} + \frac{2+d(m-1)}{q_2} = 2+d(m-1)$, } ;
\draw (6,5.5) node[right]{\scriptsize where $A=(0,1)$, $B=(\frac{2+d(m-1)}{md},\frac{m-1}{m})$, and $B'=(\frac{2+d(m-1)}{md},0)$. } ;
\draw (5.5,5) node[right]{\scriptsize $\bullet$ The region $\mathcal{R}(OABB')$, excluding segments $\overline{AB}$ and $\overline{BB'}$,} ;
\draw (6,4.5) node[right]{\scriptsize belongs to $\mathfrak{S}_{m,\sigma}^{(q_1,q_2)}$ and satisfies \eqref{T:V:divfree:PME}.} ;
\draw (5.5, 4) node[right]{\scriptsize $\bullet$ The point $A=(0,1)$ on $\mathcal{S}_{m,\sigma}^{(q_1, q_2)}$ satisfies \eqref{T:V:divfree:PME}.};

\draw (5,3.5) node[right]{\scriptsize $\star$ Fast diffusion case ($(1-\frac{2}{d})_{+} < m < 1$): } ;
\draw (5.5,3) node[right]{\scriptsize $\bullet$ The line $\overline{AC}$ is $\mathcal{S}_{m,\sigma}^{(q_1,q_2)}$ where $A=(0,1)$ and $C= (\frac{2+d(m-1)}{d}, 0)$.} ;
\draw (5.5,2.5) node[right]{\scriptsize $\bullet$ The region $\mathcal{R}(OAC)$, excluding the segment $\overline{AC}$,} ;
\draw (6,2) node[right]{\scriptsize belongs to $\mathfrak{S}_{m,\sigma}^{(q_1,q_2)}$ and satisfies \eqref{T:V:divfree:FDE}.} ;
\draw (5.5,1.5) node[right]{\scriptsize $\bullet$ The point $A=(0,1)$ on $\mathcal{S}_{m,\sigma}^{(q_1, q_2)}$ satisfies \eqref{T:V:divfree:FDE}.};

\draw (5,1) node[right]{\scriptsize $\bullet$ \textbf{Remark:} The standard critical line $\mathcal{S}_{m}^{(q_1,q_2)}$ (blue, dashed) strictly } ;
\draw (5.5,0.5) node[right]{\scriptsize lies inside the divergence-free subcritical region $\mathfrak{S}_{m,\sigma}^{(q_1,q_2)}$. } ;

\end{tikzpicture}
\caption{\footnotesize  The condition of $V$ in Theorem~\ref{T:divfree} in case $\nabla \cdot V = 0$ when $m> (1-\frac{2}{d})_{+}$.}
\label{F:divfree}
\end{figure}

Here, we state a few remarks concerning Theorem~\ref{T:divfree}:
\begin{remark}\label{R:T:divfree}
\begin{enumerate}[(i)]
	\item In the divergence-free case, the energy estimate in Theorem~\ref{T:final_est}(iii) is obtained independently of $V$. The conditions on $V$ in \eqref{T:V:divfree:PME} and \eqref{T:V:divfree:FDE} follow directly from \eqref{V:uV01} and \eqref{V:uV02} in Lemma~\ref{time derivative}, which are necessary to guarantee that $uV \in L^{1}(\Omega_T)$, as required by the definition of weak solutions in Definition~\ref{D:WS}.  
	
    \item In \cite{HKK02,HKK_FDE}, the authors studied equation \eqref{PME_drift} without the measure-type forcing term in the bounded domain $\Omega_{T}$. In the divergence-free case, when the initial data $\rho_0$ satisfies $\int_{\Omega} \rho_0 \log \rho_0 \,dx < \infty$, one can construct $L^1$-weak solutions under conditions on $V$ that allow for equality in both the class and the range of $(q_1, q_2)$, compared to \eqref{T:V:divfree:PME} for $m \ge 1$ and \eqref{T:V:divfree:FDE} for $(1-\frac{2}{d})_{+} < m < 1$.
	 
     \item In Fig.~\ref{F:divfree}, the admissible region for $V$ satisfying \eqref{T:V:divfree:PME} is shown as the dark-shaded region $\mathcal{R}(OABB')$, while the region satisfying \eqref{T:V:divfree:FDE} is illustrated as the shaded region $\mathcal{R}(OAC)$. The figure also indicates that the class $\mathcal{S}_{m,\sigma}^{(q_1,q_2)}$ strictly lies above $\mathcal{S}_{m}^{(q_1,q_2)}$.
         
\end{enumerate}
\end{remark}


Let us make a remark for different type of initial data.
\begin{remark}\label{L0 rmk}
When the initial datum is a nonnegative finite measure $\mu_0$ on $\Omega$, that is, $u(\cdot,0)=\mu_0$ in the sense of measures, the corresponding results remain valid with $\mu(\Omega_T)$ replaced by $\mu(\Omega_T)+\mu_0(\Omega)$ in \eqref{T:Energy}, \eqref{Estimate05}, and \eqref{T:Energy:divfree}; see Remarks~\ref{L1 rmk} and \ref{L2 rmk}.
\end{remark}

We remark comparing our results with those obtained in the absence of the drift term, $V=0$.
\begin{remark}\label{R:divfree-q-est}
The gradient estimate in Theorem~\ref{T:divfree} can be generalized as follows. Let $d\geq 2$, $m>\left(1-\frac{2}{d}\right)_+$, and $1<q\le m+1$. Suppose that $V$ satisfies either \eqref{T:V:divfree:PME} for $m\geq 1$ or \eqref{T:V:divfree:FDE} for $\left(1-\frac{2}{d}\right)_+<m<1$. Then for any $0<\alpha<
\frac{2(2+md)}{2+d(m+q-1)}$, the weak solution constructed in Theorem~\ref{T:divfree} satisfies
\begin{equation*}\label{Estimate05:divfree}
\int_{\Omega_T}
\left|\nabla u^{\frac{m+q-1}{2}}\right|^\alpha\,dxdt
\le
C\left(\alpha,m,d,q,|\Omega_T|,\mu(\Omega_T)\right),
\end{equation*}
where the constant $C$ is independent of $V$.
In particular, when $V=0$, the choice $q=2$
for $m>1$ recovers the corresponding estimate of \cite{BDG13}, while the choice $q=m+1$ for $\left(1-\frac{2}{d}\right)_+<m<1$ recovers the corresponding estimate of \cite{BDG15}.
\end{remark}


\subsection{Preliminaries}\label{S:Preliminaries}

In this subsection, we collect several known results that will be used repeatedly throughout the paper.
We begin with a parabolic embedding theorem.

\begin{lemma}[See {\cite[Propositions~I.3.1 \& I.3.2]{DB93}}]\label{T:pSobolev}
Let $v \in L^{\infty}(0,T;L^{q}(\Omega)) \cap L^{p}(0,T;W_0^{1,p}(\Omega))$ for some $1 \leq p < \infty$, and let $0 < q < \infty$.
Then there exists a constant $c=c(d,p,q)\ge1$ such that
\[
\int_{\Omega_T} |v(x,t)|^{\frac{p(d+q)}{d}}\,dxdt
\leq
c \left(\sup_{t\in(0,T)}\int_{\Omega} |v(x,t)|^{q}\,dx \right)^{\frac{p}{d}}
\int_{\Omega_T} |\nabla v(x,t)|^{p}\,dxdt.
\]
\end{lemma}

The following interpolation lemma is similar to \cite[Lemma~3.4]{HKK02} and \cite[Proposition~3.3]{DB93}.

\begin{lemma}\label{L:V}
	Let $m>0$, $d\geq 2$, and $\alpha \in \left[ \max\left\{1, \frac{2d}{2+md}\right\}, \, 2 \right)$. Suppose that $v\geq 0$ satisfies 
	\[
	v \in L^{\infty}(0, T; L^1 (\Omega)) \quad \text{ and } \quad
	v^{\frac{m}{2}} \in L^{\alpha} (0, T; W_{0}^{1, \alpha}(\Omega)).
	\]
	Then, 
	\[
	v \in L^{r_1, r_2}_{x,t}(\Omega_{T})
	\]
	for any pair $(r_1, r_2)$ satisfying
	\begin{equation}\label{Lr1r2}
	\begin{aligned}
	\frac{d}{r_1} + \frac{ \alpha (2+ md) - 2d}{2r_2} = d, \quad 
	\text{ for } \ 1 \leq r_1 \leq \frac{\alpha md}{2(d-\alpha)}, \quad  \max\left\{1, \frac{\alpha m}{2}\right\}\leq r_2 \leq \infty.
	\end{aligned}
	\end{equation}
	Furthermore, we have the estimate
	\begin{equation}\label{Lr1r2_norm}
	\|v\|_{L^{r_1, r_2}_{x,t}} \leq c\left(\sup_{t\in(0,T)} \int_{\Omega} v \,dx\right)^{1-\frac{\alpha m}{2r_2}} \left(\int_{\Omega_T} \left|\nabla v^{\frac{m}{2}}\right|^{\alpha}\,dxdt\right)^{\frac{1}{r_2}},
	\end{equation}
	where $c=c(\alpha, m, d)>0$ is a constant.
\end{lemma}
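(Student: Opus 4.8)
The plan is to obtain \eqref{Lr1r2}--\eqref{Lr1r2_norm} from two elementary ingredients: the spatial Sobolev inequality applied to $u^{m/2}(\cdot,t)$, and a pointwise-in-$t$ Lyapunov (log-convexity) interpolation between the $L^{1}_{x}$ norm and the Sobolev-conjugate Lebesgue norm. The parabolic embedding of Lemma~\ref{T:pSobolev} is not actually needed here: the two ``corner'' exponents $(r_{1},r_{2})=(1,\infty)$ and $(r_{1},r_{2})=\bigl(\tfrac{md\alpha}{2(d-\alpha)},\tfrac{m\alpha}{2}\bigr)$ already span the whole stated family.

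\emph{Step 1 (spatial Sobolev).} Since $\alpha<2\le d$, put $\alpha^{*}:=\tfrac{d\alpha}{d-\alpha}$ and $\beta:=\tfrac{m\alpha^{*}}{2}=\tfrac{md\alpha}{2(d-\alpha)}$; note that the hypothesis $\alpha\ge\tfrac{2d}{2+md}$ is precisely $\beta\ge1$. Applying the Gagliardo--Nirenberg--Sobolev inequality $\|w\|_{L^{\alpha^{*}}(\Omega)}\le C(d,\alpha)\|\nabla w\|_{L^{\alpha}(\Omega)}$ to $w=u^{m/2}(\cdot,t)\in W^{1,\alpha}_{0}(\Omega)$, raising to the power $\alpha$, integrating in $t$, and using the identity $\|u^{m/2}(t)\|_{L^{\alpha^{*}}}^{\alpha}=\|u(t)\|_{L^{\beta}}^{m\alpha/2}$, we obtain
\[
\int_{0}^{T}\|u(\cdot,t)\|_{L^{\beta}(\Omega)}^{\frac{m\alpha}{2}}\,dt\ \le\ C(d,\alpha)^{\alpha}\int_{\Omega_{T}}\bigl|\nabla u^{\frac m2}\bigr|^{\alpha}\,dxdt .
\]

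\emph{Step 2 (interpolation and integration in time).} Fix $r_{1},r_{2}$ in the stated ranges and set $\theta:=\tfrac{m\alpha}{2r_{2}}\in[0,1]$; the bound $\theta\le1$ is exactly $r_{2}\ge\tfrac{m\alpha}{2}$, and as $\theta$ runs over $[0,1]$ the relation $\tfrac1{r_{1}}=(1-\theta)+\tfrac{\theta}{\beta}$ pins $r_{1}$ to $[1,\beta]=[1,\tfrac{md\alpha}{2(d-\alpha)}]$ and, after substituting $\beta$, reproduces exactly the hyperplane $\tfrac d{r_{1}}+\tfrac{\alpha(2+md)-2d}{2r_{2}}=d$ of \eqref{Lr1r2}. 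For a.e.\ $t$, log-convexity of $L^{p}$ norms gives $\|u(t)\|_{L^{r_{1}}}\le\|u(t)\|_{L^{1}}^{1-\theta}\|u(t)\|_{L^{\beta}}^{\theta}$; raising to the power $r_{2}$, bounding $\|u(t)\|_{L^{1}}^{(1-\theta)r_{2}}\le(\sup_{t}\int_{\Omega}u\,dx)^{(1-\theta)r_{2}}$ pointwise, integrating in $t$, observing that $\theta r_{2}=\tfrac{m\alpha}{2}$ so the surviving time integral is the left-hand side of Step~1, and taking the $r_{2}$-th root yields \eqref{Lr1r2_norm} up to the dimensional factor $C(d,\alpha)^{\alpha/r_{2}}$, which we absorb as in the statement. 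All steps after the pointwise interpolation are inequalities between integrals, so the argument remains valid even in the quasi-norm range $r_{2}<1$.

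The proof is essentially exponent bookkeeping; the only two points requiring care are (i) matching the single interpolation parameter $\theta$ simultaneously to $r_{1}$ and $r_{2}$, so that the one-parameter curve of admissible pairs coincides with the stated hyperplane together with the endpoint ranges for $r_{1}$ and $r_{2}$, and (ii) recognizing that the admissibility threshold $\beta\ge1$ for the Sobolev step is exactly the lower endpoint $\alpha=\tfrac{2d}{2+md}$ of the hypothesis. There is no genuine analytic obstacle.
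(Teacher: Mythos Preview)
Your proof is correct and follows essentially the same route as the paper's: a pointwise-in-$t$ Sobolev inequality for $u^{m/2}(\cdot,t)$, then an $L^1$--$L^\beta$ interpolation with parameter $\theta=\tfrac{m\alpha}{2r_2}$, followed by integration in time with the $L^1_x$ factor pulled out as a supremum. The paper's argument is identical in structure (it is written with $m+q-1$ in place of $m$, a leftover from a more general version, but the steps match yours line for line), so there is nothing further to compare.
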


\begin{proof}
Since $\alpha<d$, the Sobolev embedding theorem yields
\begin{equation*}\label{Sobolev}
\|v^{\frac{m}{2}}\|_{L_{x}^{\frac{\alpha d}{d-\alpha}}} \leq c \|\nabla v^{\frac{m}{2}}\|_{L_{x}^{\alpha}}.
\end{equation*}
Let $0 \leq \theta \leq 1$, and define $r_1$ by
	\begin{equation}\label{L:V00}
	\frac{1}{r_1} = (1-\theta) + \frac{2 \theta  (d-\alpha)}{\alpha md}.
	\end{equation}
By interpolation between $L_{x}^{1}$ and $L_{x}^{\frac{\alpha m d}{2(d-\alpha)}}$, we obtain 
\begin{equation}\label{L:V01}
			\|v\|_{L^{r_1}_{x}} \leq
			\|v\|_{L^1_x}^{1-\theta} \|v\|^{\theta}_{L_{x}^{\frac{\alpha md}{2(d-\alpha)}}}
			= \|v\|_{L^1_x}^{1-\theta} \|v^{\frac{m}{2}}\|^{\frac{2\theta}{m}}_{L_{x}^{\frac{\alpha d}{d-\alpha}}}
			\leq c \|v\|_{L^1_x}^{1-\theta} \|\nabla v^{\frac{m}{2}}\|^{\frac{2\theta}{m}}_{L_{x}^{\alpha}}.
\end{equation}
If $\theta =0$, then $r_1 = 1$ and $r_2 = \infty$, then 
\[
\|v\|_{L^{1,\infty}_{x,t}} \leq \sup_{t\in(0,T)} \int_{\Omega} v \,dx.
\]	
Assume that $\theta \in (0, 1]$, and define $r_2$ by
\begin{equation}\label{L:V000}
\frac{1}{r_2} = \frac{2\theta}{\alpha m}.
\end{equation}
Taking the $L_{t}^{r_2}$-norm on both sides of \eqref{L:V01} gives 	
\begin{equation}\label{L:V02}
\|v\|_{L^{r_1,r_2}_{x,t}}
	\leq c\left( \int_{0}^{T} \|v\|_{L^1_x}^{(1-\theta)r_2} \|\nabla v^{\frac{m}{2}}\|^{\frac{2\theta r_2}{m}}_{L_{x}^{\alpha}}
 \,dt\right)^{\frac{1}{r_2}}
 \leq c\left(\sup_{t\in (0, T)}\|v\|_{L^1_x}\right)^{(1-\theta)} \|\nabla v^{\frac{m}{2}}\|_{L_{x,t}^{\alpha}}^{\frac{\alpha}{r_2}}.
	\end{equation}
Since $\theta = \frac{\alpha m}{2 r_2}$, this directly gives \eqref{Lr1r2_norm}. Finally, combining \eqref{L:V00} and \eqref{L:V000} with the condition $\theta \in [0,1]$ yields the relations in \eqref{Lr1r2}.  
\end{proof}

Finally, we recall the Aubin--Lions lemma.

\begin{lemma}[See \cite{Sim87}]\label{AL}
Let $X_0$, $X$, and $X_1$ be Banach spaces such that $X_0$ is compactly embedded in $X$ and $X$ is continuously embedded in $X_1$.
Let $1\leq p,r \leq \infty$.
For $T>0$, define
\[
W = \left\{ v\in L^{p}(0,T;X_0) \,:\, \partial_t v\in L^{r}(0,T;X_1)\right\},
\]
where $\partial_t v$ is understood in the sense of distributions.
If $p<\infty$, then the embedding of $W$ into $L^{p}(0,T;X)$ is compact.
If $p=\infty$ and $r>1$, then the embedding of $W$ into $C([0,T];X)$ is compact.
\end{lemma}


\section{A priori estimates}\label{S:apriori-est}

In this section, we derive energy-type estimates for regular solutions to \eqref{PME} under the auxiliary assumptions $V \in C^\infty(\Omega_{T})\cap L^\infty(\Omega_T)$ and $\mu \in C^\infty(\Omega_{T})\cap L^\infty(\Omega_T)$ with $\mu \ge0$.
We begin by introducing the notion of a regular solution.
\begin{definition}\label{reg sol}
Let $m>0$. A nonnegative function $u$ is called a regular solution of \eqref{PME} with zero lateral boundary data and zero initial data if
\[
u \in C([0,T];L^{m+1}(\Omega)), \quad u^{m} \in L^2(0,T;W_0^{1,2}(\Omega)),
\]
and
\begin{equation}\label{reg sol1}
\int_{\Omega_T} \left\{ - u \varphi_t + \nabla u^m \cdot \nabla \varphi - uV \cdot \nabla \varphi \right\}\,dxdt
=
\int_{\Omega_T} \mu \varphi\,dxdt
\end{equation}
for any $\varphi \in C^{\infty} \left(\overline{\Omega} \times [0, T]\right)$ which vanishes on $\partial \Omega \times (0,T)$ and $\varphi(\cdot,T)=0$.
\end{definition}


\begin{remark}
\begin{enumerate}[(i)]
  \item For $\alpha,\beta>0$, we interpret $\nabla u^\alpha$ by
  \[
  \nabla u^\alpha
  :=
  \frac{\alpha}{\beta}\chi_{\{u>0\}}u^{\alpha-\beta}\nabla u^\beta,
  \]
  whenever the right-hand side is well defined.
  \item Let $1 \le q \le m+1$ and $\delta>0$, and set $u_\delta := \max\{u, \delta\}$. If $\nabla u^m \in L^2(\Omega_T)$, then, by (i),
  \[
  \nabla u_\delta^{\frac{m+q-1}{2}} \in L^2(\Omega_T)
  \qquad\text{and}\qquad
  \nabla u_\delta^{q-1} \in L^2(\Omega_T).
  \]
\end{enumerate}
\end{remark}


We first derive the following estimates by using truncated test functions.
\begin{lemma}\label{L1}
Let $d\geq 2$ and $1-\frac{1}{d} < m \leq 2$.
Suppose that $u$ is a regular solution of \eqref{PME}.
Then it follows that
\begin{equation}\label{Estimate01}
	\sup_{t\in (0, T)} \int_{\Omega} u(x, t) \,dx \leq \mu (\Omega_{T}),
\end{equation}
where $\mu(\Omega_T): = \int_{\Omega_T} \mu(x,t) \, dxdt$. 
Moreover, for any $A>0$, $\xi > 1$, and $1 < q \le m+1$, we have
\begin{equation}\label{Estimate02}
	\int_{\Omega_T} \frac{\left|\nabla u^{\frac{m+q-1}{2}}\right|^2}{\left(A^{q-1} + u^{q-1}\right)^{\xi}} \,dxdt		
	\leq \frac{(m+q-1)^2 A^{(q-1)(1-\xi)}}{m(q-1)(\xi-1)} \left[ 2\mu(\Omega_{T}) + \frac{(q-1)(\xi-1)}{3m}\int_{\Omega_T} u^{2-m} |V|^2 \,dxdt\right].
\end{equation}
\end{lemma}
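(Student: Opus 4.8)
The plan is to test the (Steklov-averaged) weak formulation \eqref{reg sol2} against two carefully chosen test functions and then let $h\to 0$.

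\medskip

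\textbf{Step 1: the $L^\infty_t L^1_x$ bound \eqref{Estimate01}.} I would use a test function that approximates the constant $1$ in order to kill the diffusion and drift terms. Concretely, choose $\varphi=\varphi_\delta$ to be a Lipschitz cutoff that equals $1$ on $\Omega$ except near $\partial\Omega$ (so that $\varphi_\delta\in W_0^{1,r}(\Omega)$ is admissible) and let $\delta\to 0$; alternatively, since $u^{\frac{m+q-1}{2}}\in L^2(0,T;W_0^{1,2}(\Omega))$ one can use a truncation argument. Integrating \eqref{reg sol2} in $t$ from $0$ to some $s<T-h$, the terms $\int \left[\nabla u^m\right]_h\cdot\nabla\varphi_\delta$ and $\int [uV]_h\cdot\nabla\varphi_\delta$ vanish in the limit $\delta\to 0$ because $\nabla\varphi_\delta$ is supported in a shrinking neighbourhood of $\partial\Omega$ and $\nabla u^m, uV\in L^1$; the time term gives $\int_\Omega [u]_h(x,s)\,dx$ (the initial term vanishes by the zero initial condition), and the right side converges to $[\mu]_h(\Omega\times(0,s))$. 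Letting $h\to 0$ via Lemma~\ref{steklov property} and using $\mu\ge 0$ yields $\int_\Omega u(x,s)\,dx\le \mu(\Omega_T)$ for a.e.\ $s$, and then for all $s$ by $u\in C^0(0,T;L^q(\Omega))$.

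\medskip

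\textbf{Step 2: the weighted gradient estimate \eqref{Estimate02}.} Here the idea is to test with a bounded, increasing, concave-type function of $u$ so that the measure term on the right stays controlled by $\mu(\Omega_T)$. A natural choice is
\[
\varphi = \Psi(u), \qquad \Psi'(u) = \frac{1}{\left(A^{q-1}+u^{q-1}\right)^{\xi}},
\]
so that $\Psi$ is bounded (since $\xi>1$ and $q>1$ force $(q-1)(\xi-1)>0$) with $0\le\Psi(u)\le \Psi(\infty)=\frac{A^{(q-1)(1-\xi)}}{(q-1)(\xi-1)}$ up to a constant. This is an admissible test function after the usual Steklov regularization because $\Psi(u)\in L^\infty\cap L^r(0,T;W_0^{1,r}(\Omega))$ (its spatial gradient is $\Psi'(u)\nabla u$, controlled since $\Psi'$ is bounded and $\nabla u^{(m+q-1)/2}\in L^2$). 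Plugging in:
\begin{itemize}
\item[(a)] The time term $\int_{\Omega_T}\partial_t u\,\Psi(u)\,dxdt$ becomes $\int_\Omega \Phi(u(x,T))\,dx - 0 \ge 0$ where $\Phi'=\Psi$, $\Phi(0)=0$, so it is nonnegative and can be dropped (or, being bounded by $\Psi(\infty)\cdot\mu(\Omega_T)$ via Step~1, carried along — dropping is cleaner).
\item[(b)] The diffusion term gives $\int_{\Omega_T}\nabla u^m\cdot\nabla\Psi(u)\,dxdt = \int_{\Omega_T} m u^{m-1}\Psi'(u)|\nabla u|^2\,dxdt$. Rewriting $|\nabla u^{(m+q-1)/2}|^2 = \left(\frac{m+q-1}{2}\right)^2 u^{m+q-3}|\nabla u|^2$ shows this equals $\frac{4m}{(m+q-1)^2}\int_{\Omega_T} \frac{|\nabla u^{(m+q-1)/2}|^2}{(A^{q-1}+u^{q-1})^{\xi}}\cdot\frac{u^{q-1}}{u^{q-1}}\cdot u^{?}$ — I would match the powers carefully so that, after using $u^{q-1}\le A^{q-1}+u^{q-1}$ in the numerator where needed, this term dominates a constant multiple of the left-hand side of \eqref{Estimate02}.
\item[(c)] The drift term $\int_{\Omega_T} uV\cdot\nabla\Psi(u)\,dxdt = \int_{\Omega_T} u\,\Psi'(u)\,V\cdot\nabla u\,dxdt$ is estimated by Young's inequality, absorbing the resulting $|\nabla u|^2$-type quantity into (b) at the cost of a term $\lesssim \int_{\Omega_T} u^{2-m}|V|^2\,dxdt$ (the power $2-m$ is exactly what comes out after balancing weights, and it is finite for $m\le 2$).
\item[(d)] The right-hand side is $\int_{\Omega_T}\mu\,\Psi(u)\,dxdt \le \Psi(\infty)\,\mu(\Omega_T) \lesssim \frac{A^{(q-1)(1-\xi)}}{(q-1)(\xi-1)}\mu(\Omega_T)$.
\end{itemize}
Collecting (a)--(d), rearranging, and tracking the explicit constants $(m+q-1)^2$, $m$, $(q-1)$, $(\xi-1)$ produces \eqref{Estimate02}. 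As in Step~1, the Steklov averages are removed by Lemma~\ref{steklov property} at the end (the concavity/monotonicity of $\Psi$ and $\Phi$ makes the limit passage in the time term safe).

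\medskip

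\textbf{Main obstacle.} The delicate point is step (b)--(c): matching the exponents of $u$ so that the diffusion term, after the substitution to $\nabla u^{(m+q-1)/2}$, exactly produces the weighted quantity on the left of \eqref{Estimate02} with the stated sharp constant, while simultaneously leaving enough room to absorb the drift contribution via Young's inequality and have its leftover be precisely $\int_{\Omega_T} u^{2-m}|V|^2$. The restriction $m\le 2$ enters here (to keep $2-m\ge 0$ so that $u^{2-m}$ does not blow up as $u\to 0$), and one must also verify that all intermediate integrals are finite for a regular solution so that the formal computation is justified; this is where the admissibility check on $\varphi=\Psi(u)$ as a test function in Definition~\ref{reg sol} does the real work. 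The $L^1_x$ bound from Step~1 is then what guarantees the right-hand side is genuinely controlled by $\mu(\Omega_T)$ rather than by some norm of $u$.
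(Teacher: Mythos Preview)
Your Step~2 contains a genuine error in the choice of test function. With $\Psi'(u)=\bigl(A^{q-1}+u^{q-1}\bigr)^{-\xi}$, the diffusion term produces
\[
\int_{\Omega_T} m u^{m-1}\Psi'(u)|\nabla u|^2\,dxdt
=\frac{4m}{(m+q-1)^2}\int_{\Omega_T}\frac{u^{\,2-q}\,\bigl|\nabla u^{\frac{m+q-1}{2}}\bigr|^2}{\bigl(A^{q-1}+u^{q-1}\bigr)^{\xi}}\,dxdt,
\]
which carries an extra factor $u^{2-q}$ that you cannot remove by the inequality $u^{q-1}\le A^{q-1}+u^{q-1}$ you propose (that would go in the wrong direction). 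Worse, your bound on $\Psi(\infty)$ is false: $\int_0^\infty (A^{q-1}+s^{q-1})^{-\xi}\,ds$ converges only when $(q-1)\xi>1$, not for all $q>1,\ \xi>1$, so the measure term is not controlled in general. The exponent you wrote, $A^{(q-1)(1-\xi)}/((q-1)(\xi-1))$, is the value of a \emph{different} antiderivative, namely $\int_0^u s^{q-2}(A^{q-1}+s^{q-1})^{-\xi}\,ds$.

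The fix is exactly that: the right test function has derivative proportional to $u^{q-2}(A^{q-1}+u^{q-1})^{-\xi}$, i.e.\ its primitive is $(A^{q-1}+u^{q-1})^{1-\xi}$ up to affine normalization. This is precisely what the paper uses: it tests against $\eta_{2,\varepsilon}=\min\{1,u^{q-1}/\varepsilon^{q-1}\}\,\phi\,(A^{q-1}+u^{q-1})^{-(\xi-1)}$ and then lets $\varepsilon\to0$, the truncation serving to enforce the zero boundary values and to justify the computation on the degenerate set $\{u\approx 0\}$. The second piece of $\nabla\eta_{2,\varepsilon}$ (called $II_2$ in the paper) then generates the desired weighted integral with the correct constants, while the first piece $II_1$ and the drift contributions are controlled using the auxiliary bound \eqref{comp1} already obtained from the test function $\eta_{1,\varepsilon}$. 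Your outline of (c) and (d) is morally right once the test function is corrected; the obstacle you flag (``matching exponents'') is real, and your proposed $\Psi$ does not match them.

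Your Step~1 is also not quite the paper's argument. The paper does not use a spatial cutoff $\varphi_\delta\to1$ but rather tests with $\eta_{1,\varepsilon}=\min\{1,u^{q-1}/\varepsilon^{q-1}\}\,\phi(t)$, which is automatically admissible (it vanishes with $u$ on $\partial\Omega$), makes the diffusion term nonnegative rather than ``vanishing'', and confines the drift contribution to the shrinking set $\{0<u<\varepsilon\}$ where it tends to zero as $\varepsilon\to0$. Your cutoff-in-$x$ route would require controlling $\int |\nabla u^m|\,|\nabla\varphi_\delta|$ with $|\nabla\varphi_\delta|\sim\delta^{-1}$ on a strip of width $\delta$; this does not follow from $\nabla u^m\in L^1$ alone and needs an additional trace/Hardy argument that you have not supplied.
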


\begin{proof}
We first note that, since $d\ge2$ and $m>1-\frac1d$, we have
$2-m\le m+1$. Hence, from
$u\in C([0,T];L^{m+1}(\Omega))$ and $V\in L^\infty(\Omega_T)$, it follows that $u^{2-m}|V|^2\in L^1(\Omega_T)$.

We justify the nonlinear test functions by a standard smooth approximation argument. More precisely, we first perform the following computations for smooth non-degenerate approximating solutions $u_\kappa$ with zero lateral boundary data and zero initial data. For such solutions, all chain rules and integrations by parts below are classical, and the nonlinear test functions are admissible. The estimates obtained are independent of the approximation parameter $\kappa$. Hence, passing to the limit $\kappa\to0$, using the strong convergence of the approximations and the lower semicontinuity of the quadratic gradient terms, yields the estimates for the original regular solution. To simplify notation, we omit the subscript $\kappa$ below.



\smallskip
\noindent
\underline{\emph{Step 1: proof of \eqref{Estimate01}.}}

Fix $\varepsilon>0$ and $0<\delta<\varepsilon$. Define
\[
F_{\varepsilon,\delta}(s):=
\begin{cases}
0, & 0\le s\le \delta,\\[1mm]
\dfrac{s^{q-1}-\delta^{q-1}}
{\varepsilon^{q-1}-\delta^{q-1}},
& \delta<s<\varepsilon,\\[3mm]
1, & s\ge \varepsilon,
\end{cases} 
\quad \text{ and }
\quad 
\Psi_{\varepsilon,\delta}(s)
:=
\int_0^sF_{\varepsilon,\delta}(r)\,dr.
\]
Let $\phi\in C^\infty([0,T])$ satisfy $0\le \phi\le1$, $-\phi_t\ge0$, $\phi(T)=0$.
We use
\[
\eta_1:=F_{\varepsilon,\delta}(u)\phi
\]
as a test function. Since $u=0$ on $\partial\Omega\times(0,T)$ and
$F_{\varepsilon,\delta}(0)=0$, we have $\eta_1=0$ on $\partial\Omega\times(0,T)$.
Thus $\eta_1$ is admissible.

Testing the equation by $\eta_1$, we obtain
\[
I+II=III+IV,
\]
where
\[
I:=
\int_{\Omega_T}
u_tF_{\varepsilon,\delta}(u)\phi\,dxdt,
\qquad
II:=
\int_{\Omega_T}
\nabla u^m\cdot\nabla F_{\varepsilon,\delta}(u)\phi\,dxdt,
\]
\[
III:=
\int_{\Omega_T}
\mu F_{\varepsilon,\delta}(u)\phi\,dxdt,
\qquad
IV:=
\int_{\Omega_T}
uV\cdot\nabla F_{\varepsilon,\delta}(u)\phi\,dxdt.
\]

Since $\partial_t\Psi_{\varepsilon,\delta}(u) = u_tF_{\varepsilon,\delta}(u)$, 
integrating by parts in time gives
\[
I
=
\int_{\Omega_T}
\Psi_{\varepsilon,\delta}(u)(-\phi_t)\,dxdt 
-\int_\Omega
\Psi_{\varepsilon,\delta}(u(x,0))\phi(0)\,dx
+
\int_\Omega
\Psi_{\varepsilon,\delta}(u(x,T))\phi(T)\,dx.
\]
The terminal term is zero because $\phi(T)=0$. The initial term is also zero,
because $u(\cdot,0)=0$ and 
$\Psi_{\varepsilon,\delta}(0)=0$.
Therefore
\begin{equation}\label{I01-smooth-u}
I
=
\int_{\Omega_T}
\Psi_{\varepsilon,\delta}(u)(-\phi_t)\,dxdt
\ge0.
\end{equation}

Moreover, since $0\le F_{\varepsilon,\delta}\le1$ and $0\le\phi\le1$,
\begin{equation}\label{III01-smooth-u}
0\le III\le \mu(\Omega_T).
\end{equation}

Next,
\[
\nabla F_{\varepsilon,\delta}(u)
=
\frac{(q-1)u^{q-2}}
{\varepsilon^{q-1}-\delta^{q-1}}
\chi_{\{\delta<u<\varepsilon\}}
\nabla u.
\]
Hence
\[
\begin{aligned}
II
&=
\frac{m(q-1)}
{\varepsilon^{q-1}-\delta^{q-1}}
\int_{\{\delta<u<\varepsilon\}}
u^{m+q-3}
|\nabla u|^2\phi\,dxdt  \\
&=
\frac{4m(q-1)}
{(m+q-1)^2(\varepsilon^{q-1}-\delta^{q-1})}
\int_{\{\delta<u<\varepsilon\}}
\left|\nabla u^{\frac{m+q-1}{2}}\right|^2\phi\,dxdt.
\end{aligned}
\]
In particular, $II\ge0$. Letting $\delta\to0$ and using Fatou's lemma, we get
\begin{equation}\label{II01-smooth-u}
\liminf_{\delta\to0}II
\ge
\frac{4m(q-1)}
{(m+q-1)^2\varepsilon^{q-1}}
\int_{\{0<u<\varepsilon\}}
\left|\nabla u^{\frac{m+q-1}{2}}\right|^2\phi\,dxdt.
\end{equation}

For the drift term, Young's inequality gives
\[
\begin{aligned}
IV
&=
\frac{q-1}{\varepsilon^{q-1}-\delta^{q-1}}
\int_{\{\delta<u<\varepsilon\}}
u^{q-1}V\cdot\nabla u\,\phi\,dxdt  \\
&\le
\frac12 II
+
\frac{q-1}{2m(\varepsilon^{q-1}-\delta^{q-1})}
\int_{\{\delta<u<\varepsilon\}}
u^{q+1-m}|V|^2\phi\,dxdt.
\end{aligned}
\]
On $\{\delta<u<\varepsilon\}$, we have
$
u^{q+1-m}
=
u^{2-m}u^{q-1}
\le
\varepsilon^{q-1}u^{2-m}$.
Therefore
\begin{equation}\label{IV01-smooth-u}
IV
\le
\frac12 II
+
\frac{(q-1)\varepsilon^{q-1}}
{2m(\varepsilon^{q-1}-\delta^{q-1})}
\int_{\{\delta<u<\varepsilon\}}
u^{2-m}|V|^2\phi\,dxdt.
\end{equation}

Combining \eqref{I01-smooth-u}--\eqref{IV01-smooth-u}, and then letting
$\delta\to0$, we obtain
\begin{equation}\label{comp1-smooth-u}
\begin{aligned}
&\int_{\Omega_T}
\left[
\int_0^u
\min\left\{1,\frac{s^{q-1}}{\varepsilon^{q-1}}\right\}\,ds
\right]
(-\phi_t)\,dxdt
+
\frac{2m(q-1)}
{(m+q-1)^2\varepsilon^{q-1}}
\int_{\{0<u<\varepsilon\}}
\left|\nabla u^{\frac{m+q-1}{2}}\right|^2\phi\,dxdt  \\
&\qquad\le
\mu(\Omega_T)
+
\frac{q-1}{2m}
\int_{\{0<u<\varepsilon\}}
u^{2-m}|V|^2\phi\,dxdt.
\end{aligned}
\end{equation}

Letting $\varepsilon\to0$, the last term on the right-hand side tends to
zero because $u^{2-m}|V|^2\in L^1(\Omega_T)$. Also,
\[
\int_0^u
\min\left\{1,\frac{s^{q-1}}{\varepsilon^{q-1}}\right\}\,ds
\longrightarrow u
\qquad\text{in }L^1(\Omega_T).
\]
Thus
\[
\int_{\Omega_T}u(-\phi_t)\,dxdt
\le
\mu(\Omega_T)
\]
for every smooth nonincreasing $\phi$ satisfying
$0\le\phi\le1$ and $\phi(T)=0$.

Now fix $\tau\in(0,T)$. Choose nonnegative functions
$\rho_j\in C_c^\infty(0,T)$ such that
\[
\int_0^T\rho_j(t)\,dt=1,
\qquad
\rho_j\rightharpoonup\delta_\tau
\]
in the sense of measures, and set
\[
\phi_j(t):=\int_t^T\rho_j(s)\,ds.
\]
Then $0\le\phi_j\le1$, $-(\phi_j)_t=\rho_j$, and $\phi_j(T)=0$.
Since $u\in C([0,T];L^{m+1}(\Omega))$, the function
\[
t\mapsto\int_\Omega u(x,t)\,dx
\]
is continuous. Passing to the limit $j\to\infty$ gives
\[
\int_\Omega u(x,\tau)\,dx
\le
\mu(\Omega_T).
\]
Taking the supremum over $\tau\in(0,T)$ proves \eqref{Estimate01}.

\bigskip
\noindent
\underline{\emph{Step 2: proof of \eqref{Estimate02}.}}

Let $A>0$ and $\xi>1$. We again fix $\varepsilon>0$ and
$0<\delta<\varepsilon$, and use the test function
\[
\eta_2:=
\frac{F_{\varepsilon,\delta}(u)\phi}
{\left(A^{q-1}+u^{q-1}\right)^{\xi-1}},
\]
where $\phi\in C^\infty([0,T])$ satisfies $0\le\phi\le1$, $ -\phi_t\ge0$, and $\phi(T)=0$.
Since $F_{\varepsilon,\delta}(0)=0$, the function $\eta_2$ vanishes on
$\partial\Omega\times(0,T)$, and hence it is admissible.

Testing the equation by $\eta_2$ gives
\[
I+II_1+II_2=III+IV_1+IV_2,
\]
where
\[
I:=
\int_{\Omega_T}
u_t
\frac{F_{\varepsilon,\delta}(u)\phi}
{\left(A^{q-1}+u^{q-1}\right)^{\xi-1}}
\,dxdt,
\qquad
II_1:=
\int_{\Omega_T}
\frac{
\nabla u^m\cdot\nabla F_{\varepsilon,\delta}(u)
}
{\left(A^{q-1}+u^{q-1}\right)^{\xi-1}}
\phi\,dxdt,
\]
\[
II_2:=
(1-\xi)
\int_{\Omega_T}
\frac{
F_{\varepsilon,\delta}(u)
\nabla u^m\cdot\nabla u^{q-1}
}
{\left(A^{q-1}+u^{q-1}\right)^\xi}
\phi\,dxdt,
\qquad
III:=
\int_{\Omega_T}
\mu
\frac{F_{\varepsilon,\delta}(u)\phi}
{\left(A^{q-1}+u^{q-1}\right)^{\xi-1}}
\,dxdt,
\]
\[
IV_1:=
\int_{\Omega_T}
\frac{
uV\cdot\nabla F_{\varepsilon,\delta}(u)
}
{\left(A^{q-1}+u^{q-1}\right)^{\xi-1}}
\phi\,dxdt,
\qquad
IV_2:=
(1-\xi)
\int_{\Omega_T}
\frac{
F_{\varepsilon,\delta}(u)
uV\cdot\nabla u^{q-1}
}
{\left(A^{q-1}+u^{q-1}\right)^\xi}
\phi\,dxdt.
\]

Since $F_{\varepsilon,\delta}(u)=0$ on $\{u\le\delta\}$, all terms
involving $\nabla u^{q-1}$ are supported in $\{u>\delta\}$. Hence no
singularity occurs at $u=0$. Moreover,
\[
\nabla u^m\cdot\nabla u^{q-1}
=
m(q-1)u^{m+q-3}|\nabla u|^2
\ge0
\qquad\text{on }\{u>\delta\}.
\]
Since $1-\xi<0$, we have
\[
II_2\le0.
\]
Therefore
\begin{equation}\label{basic-smooth-u}
-II_2
\le
|I|+II_1+III+|IV_1|+|IV_2|.
\end{equation}

Define
\[
\Theta_{\varepsilon,\delta}(s)
:=
\int_0^s
\frac{F_{\varepsilon,\delta}(r)}
{\left(A^{q-1}+r^{q-1}\right)^{\xi-1}}
\,dr.
\]
Then
\[
I
=
\int_{\Omega_T}
\partial_t\Theta_{\varepsilon,\delta}(u)\phi\,dxdt.
\]
As in Step~1, the initial and terminal boundary terms vanish, and hence
\[
I
=
\int_{\Omega_T}
\Theta_{\varepsilon,\delta}(u)(-\phi_t)\,dxdt.
\]
Moreover,
\[
0\le
\Theta_{\varepsilon,\delta}(u)
\le
A^{(q-1)(1-\xi)}u.
\]
Using \eqref{Estimate01}, we get
\begin{equation}\label{I2-smooth-u}
|I|
\le
A^{(q-1)(1-\xi)}\mu(\Omega_T).
\end{equation}
Similarly,
\[
0\le
\frac{F_{\varepsilon,\delta}(u)}
{\left(A^{q-1}+u^{q-1}\right)^{\xi-1}}
\le
A^{(q-1)(1-\xi)},
\]
so
\begin{equation}\label{III2-smooth-u}
0\le III
\le
A^{(q-1)(1-\xi)}\mu(\Omega_T).
\end{equation}

For $II_1$, we have
\[
\begin{aligned}
II_1
&=
\frac{m(q-1)}
{\varepsilon^{q-1}-\delta^{q-1}}
\int_{\{\delta<u<\varepsilon\}}
\frac{
u^{m+q-3}|\nabla u|^2\phi
}
{\left(A^{q-1}+u^{q-1}\right)^{\xi-1}}
\,dxdt \\
&\le
A^{(q-1)(1-\xi)}
\frac{m(q-1)}
{\varepsilon^{q-1}-\delta^{q-1}}
\int_{\{\delta<u<\varepsilon\}}
u^{m+q-3}|\nabla u|^2\phi
\,dxdt.
\end{aligned}
\]
The last integral is precisely the diffusion term from Step~1. Hence, using
\eqref{comp1-smooth-u} before letting $\varepsilon\to0$, we obtain
\begin{equation}\label{II1-smooth-u}
\begin{aligned}
\limsup_{\delta\to0}II_1
&\le
2A^{(q-1)(1-\xi)}
\left[
\mu(\Omega_T)
+
\frac{q-1}{2m}
\int_{\{0<u<\varepsilon\}}
u^{2-m}|V|^2\phi\,dxdt
\right].
\end{aligned}
\end{equation}

Next, by Fatou's lemma,
\begin{equation}\label{II2-smooth-u}
\liminf_{\varepsilon\to0}
\liminf_{\delta\to0}
(-II_2) \ge
\frac{4m(q-1)(\xi-1)}
{(m+q-1)^2}
\int_{\Omega_T}
\frac{
\left|\nabla u^{\frac{m+q-1}{2}}\right|^2\phi
}
{\left(A^{q-1}+u^{q-1}\right)^\xi}
\,dxdt.
\end{equation}

For $IV_1$, Young's inequality gives
\[
\begin{aligned}
|IV_1|
&=
\left|
\frac{q-1}{\varepsilon^{q-1}-\delta^{q-1}}
\int_{\{\delta<u<\varepsilon\}}
\frac{
u^{q-1}V\cdot\nabla u\,\phi
}
{\left(A^{q-1}+u^{q-1}\right)^{\xi-1}}
\,dxdt
\right| \\
&\le
II_1 +
\frac{q-1}{4m(\varepsilon^{q-1}-\delta^{q-1})}
\int_{\{\delta<u<\varepsilon\}}
\frac{
u^{q+1-m}|V|^2\phi
}
{\left(A^{q-1}+u^{q-1}\right)^{\xi-1}}
\,dxdt.
\end{aligned}
\]
Since
$
\left(A^{q-1}+u^{q-1}\right)^{1-\xi}
\le
A^{(q-1)(1-\xi)}
$
and
$
u^{q+1-m}
=
u^{2-m}u^{q-1}
\le
\varepsilon^{q-1}u^{2-m}
$ 
on $\{\delta<u<\varepsilon\}$,
we infer
\begin{equation}\label{IV1-smooth-u}
\limsup_{\delta\to0}|IV_1|
\le
\limsup_{\delta\to0}II_1
+
o_\varepsilon(1),
\end{equation}
where $o_\varepsilon(1)\to0$ as $\varepsilon\to0$.

For $IV_2$, Young's inequality yields
\[
|IV_2|
\le
\frac14(-II_2)
+
\frac{(q-1)(\xi-1)}{m}
\int_{\Omega_T}
\frac{
F_{\varepsilon,\delta}(u)u^{q+1-m}|V|^2\phi
}
{\left(A^{q-1}+u^{q-1}\right)^\xi}
\,dxdt.
\]
Using
$
\left(A^{q-1}+u^{q-1}\right)^\xi
\ge
A^{(q-1)(\xi-1)}u^{q-1}
$ on $\{u>0\}$,
we obtain
\[
\frac{u^{q+1-m}}
{\left(A^{q-1}+u^{q-1}\right)^\xi}
\le
A^{(q-1)(1-\xi)}u^{2-m}.
\]
Therefore
\begin{equation}\label{IV2-smooth-u}
|IV_2|
\le
\frac14(-II_2)
+
\frac{(q-1)(\xi-1)A^{(q-1)(1-\xi)}}{m}
\int_{\Omega_T}u^{2-m}|V|^2\phi\,dxdt.
\end{equation}

Combining
\eqref{basic-smooth-u}--\eqref{IV2-smooth-u}, then letting
$\delta\to0$ and $\varepsilon\to0$, and using
\[
\int_{\{0<u<\varepsilon\}}
u^{2-m}|V|^2\phi\,dxdt
\to0
\qquad\text{as }\varepsilon\to0,
\]
we get
\[
\int_{\Omega_T}
\frac{
\left|\nabla u^{\frac{m+q-1}{2}}\right|^2\phi
}
{\left(A^{q-1}+u^{q-1}\right)^\xi}
\,dxdt \le
\frac{(m+q-1)^2A^{(q-1)(1-\xi)}}
{m(q-1)(\xi-1)}
\left[
2\mu(\Omega_T)
+
\frac{(q-1)(\xi-1)}{3m}
\int_{\Omega_T}u^{2-m}|V|^2\phi\,dxdt
\right].
\]

Finally, choose $\{\phi_k\}_{k=1}^\infty\subset C^\infty([0,T])$ such that
\[
0\le\phi_k\le1,\qquad -(\phi_k)_t\ge0,\qquad \phi_k(T)=0,
\]
\[
\phi_k(t)=1
\quad\text{for }0\le t\le T-\frac1k,
\qquad
\phi_k\nearrow1
\quad\text{pointwise in }(0,T).
\]
Applying the previous estimate with $\phi=\phi_k$ and then letting
$k\to\infty$, the monotone convergence theorem gives \eqref{Estimate02}, which completes the proof.
\end{proof}

\begin{remark}\label{L1 rmk}
Let the initial datum be a finite nonnegative measure $\mu_0$ on $\Omega$. Although such data are not covered directly by Definition~\ref{reg sol}, the estimates in Lemma~\ref{L1} remain valid at the approximation level. More precisely, if one approximates $\mu_0$ by a sequence of nonnegative smooth functions $\{u_{0,\kappa}\}$ such that
\[
u_{0,\kappa} \to \mu_0 \quad \text{in the sense of measures}
\qquad\text{and}\qquad
\int_\Omega u_{0,\kappa}\,dx \to \mu_0(\Omega),
\]
then the corresponding regularized solutions satisfy the same estimates as in Lemma~\ref{L1}, with $\mu(\Omega_T)$ replaced by
\[
\nu(\Omega_T):=\mu(\Omega_T)+\mu_0(\Omega).
\]
Passing to the limit yields the same bounds for solutions with measure-valued initial data.
\end{remark}

In the divergence-free case, we obtain the following analogue of Lemma~\ref{L1}.
\begin{lemma}\label{L:divfree:apriori}
(Divergence-free case)
Let $d\geq 2$ and $m>0$. Assume that $\nabla\cdot V=0$, and suppose that $u$ is a regular solution of \eqref{PME}. Then \eqref{Estimate01} holds. Moreover, for any $A>0$, $\xi>1$, and $1<q\le m+1$, we have
\begin{equation}\label{Estimate02:divfree}
\int_{\Omega_T}
\frac{\left|\nabla u^{\frac{m+q-1}{2}}\right|^2}
{\left(A^{q-1}+u^{q-1}\right)^\xi}
\,dxdt
\le
\frac{(m+q-1)^2A^{(q-1)(1-\xi)}\mu(\Omega_T)}
{2m(q-1)(\xi-1)}.
\end{equation}
\end{lemma}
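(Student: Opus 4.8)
The plan is to follow the proof of Lemma~\ref{L1} step by step; the decisive simplification is that, when $\dv V = 0$, the drift contribution to both test computations vanishes \emph{identically} rather than merely being absorbed by Cauchy's inequality. This is precisely why the quantity $\int_{\Omega_T} u^{2-m}|V|^2\,dxdt$ drops out of the right-hand sides, leaving only $\mu(\Omega_T)$.

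\emph{Vanishing of the drift term.} Both truncated test functions, $\eta_{1,\varepsilon}$ in \eqref{eta1} and $\eta_{2,\varepsilon}$ in \eqref{eta2}, have the form $\eta_{j,\varepsilon}(x,t) = g_j\bigl(u(x,t)\bigr)\,\phi(t)$ with $g_j$ Lipschitz on $[0,\infty)$ and $g_j(0)=0$ (here $g_1(s) = \min\{1,s^{q-1}/\varepsilon^{q-1}\}$ and $g_2(s) = g_1(s)(A^{q-1}+s^{q-1})^{-(\xi-1)}$). Hence $u\,V\cdot\nabla\eta_{j,\varepsilon} = \phi\,V\cdot\nabla G_j(u)$ where $G_j(s) := \int_0^s \sigma g_j'(\sigma)\,d\sigma$ satisfies $G_j(0)=0$; since $u=0$ on $\partial\Omega$ and $\dv V = 0$, integrating by parts in $x$ gives $\int_\Omega u\,V\cdot\nabla\eta_{j,\varepsilon}\,dx = -\phi\int_\Omega G_j(u)\,\dv V\,dx = 0$. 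Thus the term ``$IV$'' in the testings of \eqref{PME} by $\eta_{1,\varepsilon}$ and by $\eta_{2,\varepsilon}$ is zero. To make this rigorous at the regularity of a regular solution I would work on the Steklov-averaged identity \eqref{reg sol2}, where the time dependence is smooth, carry out the spatial integration by parts there (legitimate since $V\in L^\infty(\Omega_T)$ and $G_j$ is Lipschitz), and then pass $h\to0$ using Lemma~\ref{steklov property}.

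\emph{The two estimates.} With $IV=0$, the computation proving \eqref{Estimate01} goes through unchanged and in fact yields the sharper inequality, for each $\varepsilon>0$ and each admissible $\phi$,
\begin{equation*}
\int_{\Omega_T}\Bigl[\int_0^u\min\{1,s^{q-1}/\varepsilon^{q-1}\}\,ds\Bigr](-\phi_t)\,dxdt + \frac{4m(q-1)}{(m+q-1)^2\varepsilon^{q-1}}\int_E\Bigl|\nabla u^{\frac{m+q-1}{2}}\Bigr|^2\phi\,dxdt \le \mu(\Omega_T),
\end{equation*}
call it $(\star)$; letting $\varepsilon\to0$ and $\phi\nearrow\chi_{(0,T)}$ recovers \eqref{Estimate01}. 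For \eqref{Estimate02:divfree}, test \eqref{PME} with $\eta_{2,\varepsilon}$ and decompose $II = II_1 + II_2$ exactly as in Lemma~\ref{L1}; since $IV=0$ one has $II_2 = -I - II_1 + III$, and as $II_2\le0$ and $III\ge0$ this gives $|II_2| = I + II_1 - III \le I + II_1$. Using $(A^{q-1}+u^{q-1})^{\xi-1}\ge A^{(q-1)(\xi-1)}$ together with $(\star)$, both $I$ and $II_1$ are bounded by $A^{(q-1)(1-\xi)}$ times the respective terms of $(\star)$, so $I + II_1 \le A^{(q-1)(1-\xi)}\mu(\Omega_T)$. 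Sending $\varepsilon\to0$ — so that $II_2 \to -\tfrac{4m(q-1)(\xi-1)}{(m+q-1)^2}\int_{\Omega_T}(A^{q-1}+u^{q-1})^{-\xi}|\nabla u^{(m+q-1)/2}|^2\phi\,dxdt$ — and then $\phi\nearrow1$ on $(0,T)$ by monotone convergence yields \eqref{Estimate02:divfree} (in fact with a strictly smaller constant than stated).

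The only genuinely delicate point is the justification that the spatial integration by parts in the vanishing-of-drift step is valid at the regularity available for a regular solution; this is dealt with by passing through the Steklov-averaged formulation \eqref{reg sol2} as indicated above, and the limiting procedures ($\varepsilon\to0$, $\phi\nearrow1$) are the same as in Lemma~\ref{L1}. Everything else is a direct specialization of the arguments there with the drift terms set to zero.
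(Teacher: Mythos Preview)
Your proposal is correct and follows essentially the same approach as the paper: both show that the drift contribution $IV$ vanishes identically when $\dv V=0$ (the paper first integrates by parts to write $-\int V\cdot\nabla u\,\eta_{j,\varepsilon}=-\int V\cdot\nabla H(u)$ and then uses $\dv V=0$, while you equivalently write $uV\cdot\nabla\eta_{j,\varepsilon}=\phi\,V\cdot\nabla G_j(u)$ directly), and then rerun the Lemma~\ref{L1} computation with the $V$-terms removed. One small inaccuracy worth noting: $g_j$ is not Lipschitz when $1<q<2$ (since $s^{q-1}$ has unbounded derivative at $0$), but your $G_j$ with $G_j'(s)=s\,g_j'(s)$ \emph{is} locally Lipschitz, which is all you actually use; your observation that the argument yields a strictly smaller constant than stated in \eqref{Estimate02:divfree} is also correct.
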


\begin{proof}
We argue as in the proof of Lemma~\ref{L1}, using the same approximation
argument, notation, and truncated test functions. It remains only to observe
that the drift term vanishes.

Let $\Phi:[0,\infty)\to\mathbb{R}$ be Lipschitz with $\Phi(0)=0$, and let
\[
\eta=\Phi(u)\phi(t),
\]
where $\phi\in C^\infty([0,T])$, $0\le\phi\le1$, $-\phi_t\ge0$, and
$\phi(T)=0$. Define
\[
H_\Phi(s):=\int_0^s r\Phi'(r)\,dr.
\]
Then
\[
\nabla H_\Phi(u)=u\Phi'(u)\nabla u.
\]
Since $u=0$ on $\partial\Omega\times(0,T)$, we have $H_\Phi(u)=0$ on the
lateral boundary. Hence, using $\nabla\cdot V=0$,
\begin{equation}\label{divfree-cancel}
\int_{\Omega_T}uV\cdot\nabla\eta\,dxdt
=
\int_{\Omega_T}\phi V\cdot\nabla H_\Phi(u)\,dxdt
=
-\int_{\Omega_T}\phi(\nabla\cdot V)H_\Phi(u)\,dxdt
=0.
\end{equation}

For \eqref{Estimate01}, we take
\[
\eta_1=F_{\varepsilon,\delta}(u)\phi
\]
as in Lemma~\ref{L1}. By \eqref{divfree-cancel}, the drift term vanishes,
and the identity in Step~1 of Lemma~\ref{L1} reduces to
\[
I+II=III.
\]
The estimates of $I,II,$ and $III$ are the same as in Lemma~\ref{L1}, except
that no drift contribution appears. 
In particular,
\[
I\ge0,\qquad 0\le III\le\mu(\Omega_T),
\]
and hence
\[
I+II\le\mu(\Omega_T).
\]
Letting $\delta\to0$, then $\varepsilon\to0$, and finally approximating a
Dirac mass at any $\tau\in(0,T)$ by $-\phi_t$, exactly as in
Lemma~\ref{L1}, gives
\[
\int_\Omega u(x,\tau)\,dx\le\mu(\Omega_T).
\]
Taking the supremum over $\tau\in(0,T)$ proves \eqref{Estimate01}.

For \eqref{Estimate02:divfree}, we take
\[
\eta_2
=
\frac{F_{\varepsilon,\delta}(u)\phi}
{\left(A^{q-1}+u^{q-1}\right)^{\xi-1}}.
\]
Again, by \eqref{divfree-cancel}, the drift terms vanish. Therefore the
identity in Step~2 of Lemma~\ref{L1} reduces to
\[
I+II_1+II_2=III.
\]
Since $II_2\le0$ and $III\ge0$, we have
\[
-II_2\le I+II_1.
\]
As in Lemma~\ref{L1},
\[
I\le A^{(q-1)(1-\xi)}\mu(\Omega_T).
\]
Moreover,
\[
II_1
\le
A^{(q-1)(1-\xi)}
\int_{\Omega_T}\nabla u^m\cdot\nabla F_{\varepsilon,\delta}(u)\phi\,dxdt.
\]
The last integral is precisely the diffusion term $II$ from Step~1. Since
the divergence-free Step~1 gives $II\le\mu(\Omega_T)$, we obtain
\[
II_1\le A^{(q-1)(1-\xi)}\mu(\Omega_T).
\]
Thus
\[
-II_2\le2A^{(q-1)(1-\xi)}\mu(\Omega_T).
\]
Letting $\delta\to0$ and $\varepsilon\to0$, Fatou's lemma gives, exactly as
in Lemma~\ref{L1},
\[
\frac{4m(q-1)(\xi-1)}
{(m+q-1)^2}
\int_{\Omega_T}
\frac{\left|\nabla u^{\frac{m+q-1}{2}}\right|^2\phi}
{\left(A^{q-1}+u^{q-1}\right)^\xi}
\,dxdt
\le
2A^{(q-1)(1-\xi)}\mu(\Omega_T).
\]
Therefore,
\[
\int_{\Omega_T}
\frac{\left|\nabla u^{\frac{m+q-1}{2}}\right|^2\phi}
{\left(A^{q-1}+u^{q-1}\right)^\xi}
\,dxdt
\le
\frac{(m+q-1)^2A^{(q-1)(1-\xi)}}
{2m(q-1)(\xi-1)}
\mu(\Omega_T).
\]
Finally, taking $\phi_k\nearrow1$ as in Lemma~\ref{L1} and using the
monotone convergence theorem gives the same estimate without $\phi$. Hence \eqref{Estimate02:divfree} follows. Note that since the drift term is canceled, the restriction $1-\frac{1}{d}<m\le2$ used in
Lemma~\ref{L1} is not needed here. Thus the result holds for all $m>0$.
\end{proof}

As a consequence of Lemma~\ref{L1}, we obtain the following estimates.
\begin{lemma}\label{comparison estimates}
Assume the same hypotheses as in Lemma~\ref{L1}. Then the following estimates hold.
\begin{enumerate}[(i)]
  \item Let $1 < q \le m+1$. Also, let $0<\alpha < 2 - \frac{2d(q-1)}{2+d(m+q-1)} = \frac{2(2+md)}{2+d(m+q-1)}$. Then it follows that
	\begin{equation}\label{Estimate03}
\begin{aligned}
&\left(\fint_{\Omega_T} \left|\nabla u^{\frac{m+q-1}{2}}\right|^{\alpha} \,dxdt\right)^{\frac{1}{\alpha}} \\
&\qquad \le c \left[\frac{2d(m+q-1)^2}{m[2(2+md)-\alpha\{2+d(m+q-1)\}]}\right]^{\frac{2+d(m+q-1)}{2(2+md)}} [\mu(\Omega_{T})]^{\frac{q-1}{2+md}} \left[ \frac{2\mu(\Omega_{T})}{|\Omega_T|}\right]^{\frac{2+d(m+q-1)}{2(2+md)}}\\
&\qquad \quad + c [\mu(\Omega_{T})]^{\frac{q-1}{2+md}} \left[\frac{(m+q-1)^2}{m^2}\fint_{\Omega_T} u^{2-m} |V|^2 \,dxdt\right]^{\frac{2+d(m+q-1)}{2(2+md)}},
\end{aligned}
	\end{equation}
for some constant $c=c(m,d,q,\alpha)\ge 1$.
  \item When $q\searrow 1$, the following holds, for any $\alpha \in (0,2)$,
	\begin{equation}\label{Estimate04}
\left(\fint_{\Omega_T} \left|\nabla u^{\frac{m}{2}}\right|^{\alpha} \,dxdt\right)^{\frac{1}{\alpha}}
\le c \left[ \frac{\mu(\Omega_{T})}{|\Omega_T|}\right]^{\frac{1}{2}}+ c \left(\fint_{\Omega_T} u^{2-m} |V|^2 \,dxdt\right)^{\frac{1}{2}}
	\end{equation}
for some constant $c=c(m,d,\alpha)\ge 1$.
\end{enumerate}
\end{lemma}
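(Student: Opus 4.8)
The plan is to start from the weighted gradient bound \eqref{Estimate02} of Lemma~\ref{L1} and remove the weight $(A^{q-1}+u^{q-1})^{-\xi}$ by an H\"older argument, then optimize in the free parameters $A$ and $\xi$. First I would fix $\xi>1$ (to be chosen at the end, or simply $\xi=2$ for concreteness) and write, for $0<\alpha<2$,
\[
\int_{\Omega_T}\left|\nabla u^{\frac{m+q-1}{2}}\right|^{\alpha}dxdt
=\int_{\Omega_T}\frac{\left|\nabla u^{\frac{m+q-1}{2}}\right|^{\alpha}}{\left(A^{q-1}+u^{q-1}\right)^{\alpha\xi/2}}\left(A^{q-1}+u^{q-1}\right)^{\alpha\xi/2}dxdt,
\]
and apply H\"older with exponents $2/\alpha$ and $2/(2-\alpha)$. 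The first factor is controlled by \eqref{Estimate02}, raised to the power $\alpha/2$; the second factor becomes
$\left(\int_{\Omega_T}\left(A^{q-1}+u^{q-1}\right)^{\frac{\alpha\xi}{2-\alpha}}dxdt\right)^{\frac{2-\alpha}{2}}$,
which I bound by $c\,A^{(q-1)\alpha\xi/2}|\Omega_T|^{(2-\alpha)/2}+c\left(\int_{\Omega_T}u^{\frac{(q-1)\alpha\xi}{2-\alpha}}dxdt\right)^{(2-\alpha)/2}$.

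Next I would invoke Lemma~\ref{L:V} to estimate the $L^{r_1,r_2}_{x,t}$ (here simply $L^p_{x,t}$) norm of $u$ appearing in that last integral in terms of $\sup_t\|u(\cdot,t)\|_{L^1}$ and $\|\nabla u^{\frac{m+q-1}{2}}\|_{L^\alpha_{x,t}}$; the exponent $p=\frac{(q-1)\alpha\xi}{2-\alpha}$ must lie in the admissible range dictated by \eqref{Lr1r2}, which is exactly where the constraint $\alpha<\frac{2(2+md)}{2+d(m+q-1)}$ enters — one picks $\xi$ (equivalently the H\"older split) so that the power of $\|\nabla u^{\frac{m+q-1}{2}}\|_{L^\alpha_{x,t}}$ produced on the right-hand side, call it $\|\nabla u^{\frac{m+q-1}{2}}\|_{L^\alpha_{x,t}}^{\beta}$, has $\beta<\alpha$, so the term can be absorbed into the left-hand side. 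Using \eqref{Estimate01} to replace $\sup_t\|u(\cdot,t)\|_{L^1}$ by $\mu(\Omega_T)$, absorbing, and then choosing $A$ to balance the two surviving contributions (the $\mu(\Omega_T)/|\Omega_T|$ term and the $u^{2-m}|V|^2$ term coming through \eqref{Estimate02}), one arrives after elementary bookkeeping of exponents at \eqref{Estimate03}; tracking the blow-up of the constant as $\alpha\uparrow\frac{2(2+md)}{2+d(m+q-1)}$ gives the displayed factor $[2(2+md)-\alpha\{2+d(m+q-1)\}]^{-\frac{2+d(m+q-1)}{2(2+md)}}$.

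For part~(ii), I would simply pass to the limit $q\searrow1$ in \eqref{Estimate03}. As $q\to1$ the exponents simplify: $\frac{q-1}{2+md}\to0$ (so the $\mu(\Omega_T)$ prefactors disappear), $\frac{2+d(m+q-1)}{2(2+md)}\to\frac12$, and the weight $(A^{q-1}+u^{q-1})^{-\xi}\to(1+1)^{-\xi}$ degenerates harmlessly; the admissibility range $\alpha<\frac{2(2+md)}{2+d(m+q-1)}$ opens up to $\alpha<2$. The coefficient $\frac{(m+q-1)^2}{m^2}\to1$, and one reads off \eqref{Estimate04}. One must check the limit is legitimate — either redo the argument directly at $q=1$ (the weight is then a harmless constant and the absorption step is cleaner), or note all constants are continuous in $q$ on $(1,1+\delta]$.

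The main obstacle is the bookkeeping in the H\"older/absorption step: one must choose $\xi$ (or the H\"older exponents) so that simultaneously (a) the auxiliary power $p$ of $u$ lands in the admissible window of Lemma~\ref{L:V}, and (b) the resulting self-improving exponent $\beta$ on $\|\nabla u^{\frac{m+q-1}{2}}\|_{L^\alpha}$ is strictly less than $\alpha$, which is possible \emph{precisely} when $\alpha<\frac{2(2+md)}{2+d(m+q-1)}$ — so the sharp threshold is forced by this step rather than being an artifact. Getting the constant's dependence on the gap $2(2+md)-\alpha\{2+d(m+q-1)\}$ right requires carrying the inequality $\beta<\alpha$ quantitatively through Young's inequality in the absorption.
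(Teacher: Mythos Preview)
Your proposal is correct and follows essentially the same route as the paper's proof: H\"older split against the weight $(A^{q-1}+u^{q-1})^{\xi}$, use of a parabolic embedding to bound the resulting $u$-integral by the gradient, and then resolution of the self-improving inequality. The only cosmetic differences are that the paper invokes Lemma~\ref{T:pSobolev} rather than Lemma~\ref{L:V}, and instead of keeping $A,\xi$ free and optimizing/absorbing at the end it fixes $A$ by $A^{\xi\alpha(q-1)/2}=\bigl(\fint u^{\,\xi\alpha(q-1)/(2-\alpha)}\bigr)^{(2-\alpha)/2}$ and $\xi$ so that this $u$-exponent matches the embedding exponent exactly, which lands directly on an inequality of the form $X^{1-\theta}\le C$ with $\theta=\frac{d(q-1)}{2+d(m+q-1)}$ and no Young step is needed.
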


\begin{remark}
\begin{enumerate}[(i)]
  \item The constant $c$ in \eqref{Estimate03} remains bounded as
  $q\searrow1$.

  \item In the divergence-free case $\nabla\cdot V=0$, under the hypotheses of
  Lemma~\ref{L:divfree:apriori} instead of Lemma~\ref{L1}, the estimates \eqref{Estimate03} and
  \eqref{Estimate04} remain valid with the terms involving $V$ omitted.
\end{enumerate}
\end{remark}

\begin{proof}
The estimate \eqref{Estimate04} follows from the estimate \eqref{Estimate03}. Now we prove \eqref{Estimate03} for
$1\le \alpha < \frac{2(2+md)}{2+d(m+q-1)}$.
The estimate for $\alpha<1$, whenever needed, follows from
H\"{o}lder's inequality after proving the estimate for a larger exponent. First, it follows that by applying the H\"{o}lder inequality,
\begin{equation*}
\begin{aligned}
  \fint_{\Omega_T} \left|\nabla u^{\frac{m+q-1}{2}}\right|^{\alpha} \,dxdt &= \fint_{\Omega_T} \left( \frac{\left|\nabla u^{\frac{m+q-1}{2}}\right|^2}{\left(A^{q-1}+u^{q-1}\right)^\xi} \right)^{\frac{\alpha}{2}} \left(A^{q-1}+u^{q-1}\right)^{\frac{\xi\alpha}{2}} \,dxdt\\
  &\le \left( \fint_{\Omega_T} \frac{\left|\nabla u^{\frac{m+q-1}{2}}\right|^2}{\left(A^{q-1}+u^{q-1}\right)^\xi} \,dxdt \right)^{\frac{\alpha}{2}} \underbrace{\left( \fint_{\Omega_T} \left(A^{q-1}+u^{q-1}\right)^{\frac{\xi\alpha}{2-\alpha}} \,dxdt \right)^{\frac{2-\alpha}{2}}}_{=:I}.
\end{aligned}
\end{equation*}
Then we have
\begin{equation*}
  I \le c(\alpha) A^{\frac{\xi\alpha(q-1)}{2}} + \left( \fint_{\Omega_T} u^{\frac{\xi\alpha(q-1)}{2-\alpha}} \,dxdt \right)^{\frac{2-\alpha}{2}} = c(\alpha) A^{\frac{\xi\alpha(q-1)}{2}},
\end{equation*}
by choosing the constant $A$ with
\begin{equation}\label{est03-01}
  A^{\frac{\xi\alpha(q-1)}{2}} = \left( \fint_{\Omega_T} u^{\frac{\xi\alpha(q-1)}{2-\alpha}} \,dxdt \right)^{\frac{2-\alpha}{2}}.
\end{equation}
We combine the above inequalities and \eqref{Estimate02} to discover
\begin{equation*}\label{est03-02}
\fint_{\Omega_T} \left|\nabla u^{\frac{m+q-1}{2}}\right|^{\alpha} \,dxdt
\le \left[\frac{(m+q-1)^2}{m(q-1)(\xi-1)}\right]^{\frac{\alpha}{2}} A^{\frac{\alpha(q-1)}{2}} \left[ \frac{2\mu(\Omega_{T})}{|\Omega_T|} + \frac{(q-1)(\xi-1)}{3m}\fint_{\Omega_T} u^{2-m} |V|^2 \,dxdt\right]^{\frac{\alpha}{2}}.
\end{equation*}
On the other hand, Lemma \ref{T:pSobolev} and \eqref{Estimate01} imply
\begin{equation}\label{est03-03}
\begin{aligned}
  \fint_{\Omega_T} \left(u^{\frac{m+q-1}{2}}\right)^{\alpha\frac{d+\frac{2}{m+q-1}}{d}} \,dxdt
  &\le c \fint_{\Omega_T} \left|\nabla u^{\frac{m+q-1}{2}}\right|^{\alpha} \,dxdt \left(\sup_{t \in (0, T)} \int_{\Omega} \left(u^{\frac{m+q-1}{2}}\right)^{\frac{2}{m+q-1}} \,dx \right)^{\frac{\alpha}{d}}\\
  &\le c [\mu(\Omega_{T})]^{\frac{\alpha}{d}} \fint_{\Omega_T}  \left|\nabla u^{\frac{m+q-1}{2}}\right|^{\alpha} \,dxdt,
\end{aligned}
\end{equation}
where $c=c(m,d,q,\alpha)\ge 1$.
Now we choose $\xi >1$ such that
\begin{equation*}
  \frac{\xi\alpha(q-1)}{2-\alpha} = \frac{m+q-1}{2}\alpha\frac{d+\frac{2}{m+q-1}}{d},
\end{equation*}
that is,
\begin{gather*}
  \xi = \frac{(2-\alpha)\{2+d(m+q-1)\}}{2d(q-1)} > 1 \iff \alpha < 2 - \frac{2d(q-1)}{2+d(m+q-1)} = \frac{2(2+md)}{2+d(m+q-1)},\\
  \xi-1=\frac{2(2+md)-\alpha\{2+d(m+q-1)\}}{2d(q-1)},\\
  \frac{(m+q-1)^2}{m(q-1)(\xi-1)} = \frac{2d(m+q-1)^2}{m[2(2+md)-\alpha\{2+d(m+q-1)\}]}.
\end{gather*}
Inserting \eqref{est03-03} into \eqref{est03-01}, we find
\begin{equation*}
   A^{\frac{\alpha(q-1)}{2}} \le c [\mu(\Omega_{T})]^{\frac{\alpha(q-1)}{2+d(m+q-1)}} \left(
\fint_{\Omega_T}
\left|\nabla u^{\frac{m+q-1}{2}}\right|^\alpha\,dxdt
\right)^{\frac{d(q-1)}{2+d(m+q-1)}}.
\end{equation*}
Combining the above estimates, we obtain
\begin{equation*}\label{est03-04}
\begin{aligned}
&\left(\fint_{\Omega_T} \left|\nabla u^{\frac{m+q-1}{2}}\right|^{\alpha} \,dxdt\right)^{1-\frac{d(q-1)}{2+d(m+q-1)}} \\
&\qquad \le c \left[\frac{2d(m+q-1)^2}{m[2(2+md)-\alpha\{2+d(m+q-1)\}]}\right]^{\frac{\alpha}{2}} [\mu(\Omega_{T})]^{\frac{\alpha(q-1)}{2+d(m+q-1)}} \left[ \frac{2\mu(\Omega_{T})}{|\Omega_T|}\right]^{\frac{\alpha}{2}}\\
&\qquad \quad + c [\mu(\Omega_{T})]^{\frac{\alpha(q-1)}{2+d(m+q-1)}} \left[\frac{(m+q-1)^2}{m^2}\fint_{\Omega_T} u^{2-m} |V|^2 \,dxdt\right]^{\frac{\alpha}{2}},
\end{aligned}
\end{equation*}
where $c=c(m,d,q,\alpha)\ge 1$. This implies the estimate \eqref{Estimate03}.
\end{proof}

We remark that the previous results can also be obtained under nonzero initial data.
\begin{remark}\label{L2 rmk}
Suppose that the initial datum is a finite nonnegative measure $\mu_0$ on $\Omega$. Then the estimates in Lemma~\ref{comparison estimates} remain valid with $\mu(\Omega_T)$ replaced by
$\nu(\Omega_T):=\mu(\Omega_T)+\mu_0(\Omega)$. See also Remark~\ref{L1 rmk}.
\end{remark}

We now finalize the gradient estimate under the appropriate assumptions on $V$.

\begin{theorem}\label{T:final_est}
Assume the same hypotheses as in Lemma~\ref{L1} for parts (i) and (ii), and as in Lemma~\ref{L:divfree:apriori} for part (iii).

\begin{enumerate}[(i)]
	\item (Porous medium case) If $1\leq m < 2$ and $V$ satisfies \eqref{T:V:PME}, then for any $\alpha \in (1, 2)$, the following estimate holds:
	 \begin{equation}\label{E_V}
	\left(\fint_{\Omega_T} \left|\nabla u^{\frac{m}{2}}\right|^{\alpha} \,dxdt\right)^{\frac{1}{\alpha}}
\leq c\left[ \frac{\mu(\Omega_{T})}{|\Omega_T|}\right]^{\frac{1}{2}}
 + C \|V\|_{L_{x,t}^{q_1, q_2}}^{\sigma_1} \left[\mu(\Omega_{T})\right]^{\sigma_2},
   \end{equation}
   where $c=c(m,d,\alpha)$, $C=C(m,d,\alpha,|\Omega_{T}|)$, $\sigma_1 = \frac{2q_2}{(2-\alpha)q_2 +2 \alpha}$, and $\sigma_2=\frac{2(2-m)q_2-\alpha m (q_2-2)}{2[(2-\alpha)q_2 +2\alpha]}$.

	Furthermore, if $m=2$ and $V$ satisfies \eqref{T:V:PME_2}, then for any $\alpha \in (1,2)$, we have 
	\begin{equation}\label{E_V_2}
	\left(\fint_{\Omega_T} \left|\nabla u^{\frac{m}{2}}\right|^{\alpha} \,dxdt\right)^{\frac{1}{\alpha}}
\leq c\left[ \frac{\mu(\Omega_{T})}{|\Omega_T|}\right]^{\frac{1}{2}}
 + C \|V\|_{L_{x,t}^{q_1, q_2}},
   \end{equation}
   where $c=c(m,d,\alpha)$, $C=C(m,d,\alpha,|\Omega_{T}|)$. 
   
 \smallskip

 \item (Fast diffusion case) If $1-\frac{1}{d} < m < 1$ and $V$ satisfies \eqref{T:V:FDE}, then for any $\alpha \in (1, 2)$, the estimate \eqref{E_V} holds.

 \smallskip

 \item (Divergence-free case) If $m > 0$ and $\nabla \cdot V = 0$, then for any $\alpha \in (1,2)$, the following estimate holds: 
     \begin{equation}\label{E_V_divfree}
	  \left(\fint_{\Omega_T} \left|\nabla u^{\frac{m}{2}}\right|^{\alpha} \,dxdt\right)^{\frac{1}{\alpha}}
  \leq c\left[ \frac{\mu(\Omega_{T})}{|\Omega_T|}\right]^{\frac{1}{2}},
   \end{equation}
   where $c=c(m,d,\alpha)$.
   \end{enumerate}
\end{theorem}

\begin{proof} 
Now we handle the term of $V$ in the estimate \eqref{Estimate04}. Let us set
    \[
   \mathcal{V}= \int_{\Omega_T} u^{2-m} |V|^2 \,dxdt.
   \]

If $m=2$, then $\mathcal{V}= \|V\|^{2}_{L_{x,t}^{2,2}}$. Therefore, the estimate \eqref{Estimate04} directly becomes 
\begin{equation}\label{E_V_22}
\left(\fint_{\Omega_T} \left|\nabla u^{\frac{m}{2}}\right|^{\alpha} \,dxdt\right)^{\frac{1}{\alpha}}
\le c \left[ \frac{\mu(\Omega_{T})}{|\Omega_T|}\right]^{\frac{1}{2}}+ c \left(\fint_{\Omega_T} |V|^2 \,dxdt\right)^{\frac{1}{2}}.
\end{equation}

Now let $1-\frac{1}{d}< m <2$. Applying H\"{o}lder inequalities in space with $q_1 >2$ and time with $q_2 >2$, we obtain 
\begin{equation*}
	\mathcal{V} \leq \|V\|_{L_{x,t}^{q_1, q_2}}^{2} \|u\|_{L_{x,t}^{r_1, r_2}}^{2-m},
	\quad  r_1 = \frac{(2-m)q_1}{q_1 - 2}, \quad 	r_2 = \frac{(2-m)q_2}{q_2 - 2}.
	\end{equation*}
Requiring the pair $(r_1, r_2)$ to satisfy \eqref{Lr1r2} in Lemma~\ref{L:V} is equivalent to  
\begin{equation}\label{V_alpha}
\begin{gathered}
V \in L_{x,t}^{q_1,q_2} \ \text{ where } \frac{d}{q_1} + \frac{\alpha (2+md) - 2d}{2q_2} = \frac{\alpha(2+md)+2d(m-2)}{4},	 \\
 \text{ for }
 \begin{cases}
 	\frac{m-1}{2} \leq \frac{1}{q_1} \leq \frac{1}{2}-\frac{(2-m)(d-\alpha)}{\alpha md}, \
 	\frac{1}{2}-\frac{2-m}{\alpha m} \leq \frac{1}{q_2} \leq \frac{1}{2}  &\text{if } 1\leq m < 2, \ \frac{2d}{2+md} \leq \alpha < 2, \vspace{1mm} \\
 	0\leq \frac{1}{q_1} \leq \frac{\alpha(2+md)+2d(m-2)}{4d}, \
 	0 \leq \frac{1}{q_2} \leq \frac{1}{2}-\frac{d(1-m)}{\alpha(2+md)-2d} & \text{if } 1-\frac{1}{d}< m < 1, \ \frac{2d(2-m)}{2+md} \leq \alpha < 2.
 \end{cases}
\end{gathered}\end{equation}
Then, combining \eqref{Lr1r2_norm} in Lemma~\ref{L:V} and \eqref{Estimate01} in Lemma~\ref{L1} yields 
\begin{equation}\label{V:est:02}
\mathcal{V} \leq c\|V\|_{L_{x,t}^{q_1, q_2}}^{2} \left[\mu(\Omega_{T})\right]^{\frac{2(2-m) q_2 - \alpha m (q_2 -2)}{2q_2}} \left(\int_{\Omega_T} \left|\nabla u^{\frac{m}{2}}\right|^{\alpha}\,dxdt\right)^{1-\frac{2}{q_2}}. 
\end{equation}

We now derive \eqref{E_V} for each case.

$\bullet$ (\emph{Porous medium case}) 
If $m=2$, the desired estimate \eqref{E_V_2} follows immediately from \eqref{E_V_22} by applying H\"{o}lder inequalities for any $q_1, q_2 \geq 2$.

Let $1\leq m <2$ and $V \in L_{x,t}^{\frac{2}{m-1}, 2}$. In this setting, $V$ belongs to the scaling-invariant class $\mathcal{S}_{m}^{(q_1, q_2)}$ (with the convention that $\frac{2}{m-1}=\infty$ if $m=1$). Applying H\"{o}lder inequality in space with $(2-m)+(m-1)=1$ and using \eqref{Estimate01}, we have
\begin{equation}\label{V:est:01}
\mathcal{V} \leq \left(\sup_{t\in (0,T)} \int_{\Omega} u \,dx\right)^{2-m} \|V\|_{L_{x,t}^{\frac{2}{m-1}, 2}}^{2} \leq [\mu(\Omega_T)]^{2-m} \cdot \|V\|_{L_{x,t}^{\frac{2}{m-1}, 2}}^{2}.
\end{equation}
Note that \eqref{V:est:01} is also valid for $m=1$. This gives \eqref{E_V} with $\sigma_1 = 1$ and $\sigma_{2}=\frac{2-m}{2}$ since $q_2 =2$.

If $1\leq m <2$ and $V$ satisfies \eqref{V_alpha}, inserting \eqref{V:est:02} into \eqref{Estimate04} implies 
 \begin{equation}\label{V:est:03}
\left(\fint_{\Omega_T} \left|\nabla u^{\frac{m}{2}}\right|^{\alpha} \,dxdt\right)^{\frac{1}{\alpha}}
\le c \left[ \frac{\mu(\Omega_{T})}{|\Omega_T|}\right]^{\frac{1}{2}}
+ c \|V\|_{L_{x,t}^{q_1, q_2}} \frac{\left[\mu(\Omega_{T})\right]^{\frac{2(2-m)q_2 - \alpha m (q_2 -2)}{4q_2}}}{|\Omega_{T}|^{\frac{1}{q_2}}} 
\left(\fint_{\Omega_T} \left|\nabla u^{\frac{m}{2}}\right|^{\alpha}\,dxdt\right)^{\frac{q_2 -2}{2q_2}}. 
	\end{equation}   
By applying Young's inequality with $\frac{\alpha(q_2 -2)}{2q_2} + \frac{(2-\alpha)q_2 + 2\alpha}{2q_2}=1$, we obtain 
\begin{equation*}\label{V:est:04}\begin{aligned}
\left(\fint_{\Omega_T} \left|\nabla u^{\frac{m}{2}}\right|^{\alpha} \,dxdt\right)^{\frac{1}{\alpha}}
&\le c \left[ \frac{\mu(\Omega_{T})}{|\Omega_T|}\right]^{\frac{1}{2}}
+ \frac{1}{2} \left(\fint_{\Omega_T} \left|\nabla u^{\frac{m}{2}}\right|^{\alpha} \,dxdt\right)^{\frac{1}{\alpha}} + c\|V\|_{L_{x,t}^{q_1, q_2}}^{\frac{2q_2}{(2-\alpha)q_2 + 2\alpha}} 
\frac{\left[\mu(\Omega_{T})\right]^{\frac{2(2-m)q_2-\alpha m(q_2-2)}{2\left[(2-\alpha)q_2 + 2\alpha\right]}}}{|\Omega_{T}|^{\frac{2}{(2-\alpha)q_2 + 2\alpha}}},
\end{aligned}\end{equation*}
which implies \eqref{E_V}. Since \eqref{Estimate04} is obtained for any $\alpha \in (0,2)$, the condition \eqref{V_alpha} is valid for $1\leq m <2$ and $\frac{2d}{2+md} \leq \alpha < 2$. As $\alpha \to 2$, \eqref{V_alpha} corresponds to 
\[
V \in \mathfrak{S}_{m}^{(q_1,q_2)} \ \text{ where } \ \frac{md}{(2-m)+d(m-1)} < q_1 \leq \frac{2}{m-1}, \  2 \leq q_2 < \frac{m}{m-1}.
\]

If $V$ satisfies \eqref{T:V:PME}, then $(q_1,q_2)$ either equals $\left( \frac{2}{m - 1}, 2 \right)$ (where \eqref{V:est:01} or \eqref{E_V_22} applies), or satisfies \eqref{V_alpha} (where \eqref{V:est:03} applies), or lies in the subcritical regime of \eqref{V_alpha}. In the subcritical case, there exists a pair $(\tilde{q}_1, \tilde{q}_2)$ satisfying \eqref{V_alpha} such that $\tilde{q}_1 \leq q_1$ and $\tilde{q}_2 \leq q_2$. Therefore, we have 
\begin{equation}\label{subSalpha}
\|V\|_{L^{\tilde{q}_1, \tilde{q}_2}_{x,t}} \leq |\Omega|^{\frac{1}{\tilde{q}_1} - \frac{1}{q_1}} T^{\frac{1}{\tilde{q}_2} - \frac{1}{q_2}} \|V\|_{L^{q_1, q_2}_{x,t}},
\end{equation}
which again leads to \eqref{V:est:03}. Thus \eqref{E_V} is valid for all $V$ satisfying \eqref{T:V:PME}.

$\bullet$ (\emph{Fast diffusion case}) Assume that $1-\frac{1}{d}<m<1$. By the same reasoning as above, the class \eqref{V_alpha} as $\alpha \to 2$ corresponds to \eqref{T:V:FDE}. Consequently, for any $V$ satisfying \eqref{T:V:FDE}, the estimate \eqref{E_V} holds.

$\bullet$ (\emph{Divergence-free case})  
Since both \eqref{Estimate01} and \eqref{Estimate02:divfree} are independent of $V$, the estimate \eqref{Estimate04} from Lemma~\ref{comparison estimates} contains no drift integral term. Therefore, it directly yields \eqref{E_V_divfree}.
\end{proof}

\section{Existence of weak solutions}\label{S:Exist}

In this section, we construct weak solutions to \eqref{PME} by means of compactness arguments applied to a sequence of regularized problems. As a preliminary step, we establish an estimate for the time derivative of a regular solution under suitable assumptions on the drift field $V$.

\begin{lemma}\label{time derivative}
Let $u$ be a regular solution of \eqref{PME} with $\mu\in L^1(\Omega_T)$. Assume that
\begin{equation}\label{u_FC}
\sup_{t\in (0, T)} \int_{\Omega} u \,dx < \infty \ \text{ and } \  \int_{\Omega_T} |\nabla u^{\frac{m}{2}}|^{\alpha} \,dxdt < \infty,
\end{equation}
where $\alpha$ satisfies 
\begin{equation}\label{alpha01}
\begin{cases}
	\alpha \in [\frac{2}{2-m},2), & \text{if } \ (1-\frac{2}{d})_{+} < m < 1, \vspace{1mm}\\
	\alpha \in [\frac{2(1+md)}{2+md},2), & \text{if }\ m \geq 1.
\end{cases}
\end{equation}
Furthermore, assume that $V$ satisfies either
\begin{equation}\label{V:uV01}
V \in L_{x,t}^{q_1,q_2} \ \text{ for } \ q_1 = \infty, \ q_2 =1, \quad \text{ if } \ m \geq 1,
\end{equation}
or
\begin{equation}\label{V:uV02}
\begin{aligned}
V \in L_{x,t}^{q_1,q_2}  & \text{ where } \ \frac{d}{q_1} + \frac{2+d(m-1)}{q_2} < 2+d(m-1), \vspace{1mm} \\
& \  \text{ for }\
\begin{cases}
 0 \leq \frac{1}{q_1} \leq \frac{2 + d(m-1)}{d}, \
0\leq \frac{1}{q_2} \leq 1  & \text{ if } \ (1-\frac{2}{d})_{+}< m < 1, \vspace{1mm}\\
 0 \leq \frac{1}{q_1} < \frac{2 + d(m-1)}{md}, \
\frac{m - 1}{m} < \frac{1}{q_2} \leq 1 & \text{ if } \ m \geq 1.
\end{cases}
\end{aligned}
\end{equation}
Then, it follows that
\[
\partial_t u \in L^{1}(0,T;  W^{-1,1}(\Omega)).
\]
\end{lemma}

\begin{proof}
We proceed under the condition \eqref{u_FC}, which is established in Lemma~\ref{L1} and Theorem~\ref{T:final_est}. 

To show that $\partial_t u \in L^{1}(0,T;  W^{-1,1}(\Omega))$, it suffices to bound the right-hand side of the weak formulation
\begin{equation}\label{compact01}
\int_{\Omega_T} \partial_t u \,\varphi \,dxdt
=
-\int_{\Omega_T}\nabla u^m\cdot\nabla\varphi\,dxdt
+
\int_{\Omega_T}uV\cdot\nabla\varphi\,dxdt
+
\int_{\Omega_T}\mu\varphi\,dxdt
\end{equation}
for every $\varphi \in C^{\infty}(\overline{\Omega} \times [0,T])$ such that $\varphi = 0$ on $\partial \Omega \times (0,T)$ and $\varphi(\cdot, T)=0$.

First, it is straightforward to see that 
\[
\int_{\Omega_T} \mu \varphi \,dxdt \leq \|\mu\|_{L^1(\Omega_T)}\|\varphi\|_{L^{\infty}(\Omega_T)} < \infty. 
\]

Next, using the identity $|\nabla u^m| = 2 u^{\frac{m}{2}}|\nabla u^{\frac{m}{2}}|$, we obtain 
\begin{equation*}
\int_{\Omega_T} |\nabla u^m| |\nabla\varphi| \,dxdt
\leq 2\|\nabla \varphi\|_{L^{\infty}(\Omega_{T})} \int_{\Omega_T} u^{\frac{m}{2}}|\nabla u^{\frac{m}{2}}| \,dxdt.
\end{equation*}
By H\"{o}lder's inequality, it follows that 
\begin{equation*}
\begin{aligned}
\int_{\Omega_T} u^{\frac{m}{2}}|\nabla u^{\frac{m}{2}}| \,dxdt 
&\leq \left(\int_{\Omega_T} |\nabla u^{\frac{m}{2}}|^{\alpha} \,dxdt\right)^{\frac{1}{\alpha}} \left(\int_{\Omega_T} u^{\frac{\alpha m}{2(\alpha -1)}} \,dxdt\right)^{1-\frac{1}{\alpha}}.
\end{aligned}
\end{equation*}
Consequently, the second integral is finite provided that the exponent satisfies 
\[
\frac{\alpha m}{2(\alpha -1)} \leq
\begin{cases}
	1, & \text{ for } (1-\frac{2}{d})_{+}<m < 1 \text{ and  } \alpha \in [\frac{2}{2-m},2), \vspace{1mm}\\
	\frac{\alpha (2+md)}{2d}, & \text{ for } m \geq 1 \text{ and } \alpha \in [\frac{2(1+md)}{2+md}, 2).
\end{cases}
\]
Indeed, by Lemma~\ref{T:pSobolev}, the assumption \eqref{u_FC} ensures that $u \in L_{x,t}^{\frac{\alpha(2+md)}{2d}}$, and in the singular case, we directly use the first condition in \eqref{u_FC}.

Lastly, applying H\"{o}lder's inequality with $q_1, q_2 > 1$ yields
\begin{equation}\label{compact02}
	\int_{\Omega_T} u |V| |\nabla \varphi| \,dxdt
	\leq \|\nabla \varphi\|_{L^{\infty}} \|V\|_{L_{x,t}^{q_1,q_2}} \|u\|_{L_{x,t}^{r_1,r_2}}, 
\end{equation}
where $r_1 = \frac{q_1}{q_1 -1}$ and $r_2 = \frac{q_2}{q_2 -1}$.
Note that the bound for the case $V\in L_{x,t}^{\infty,1}$ in \eqref{V:uV01} is immediate since $u\in L_{x,t}^{1,\infty}(\Omega_{T})$.
Furthermore, Lemma~\ref{L:V} combined with \eqref{u_FC} guarantees that $\|u\|_{L_{x,t}^{r_1,r_2}} < \infty$ whenever the pair $(r_1, r_2)$ satisfies \eqref{Lr1r2}, which corresponds to the pair $(q_1, q_2)$ in \eqref{V:uV02}. 

Combining the above estimates in \eqref{compact01}, we conclude that 
\[
\partial_t u \in L^{1}(0,T;  W^{-1,1}(\Omega)),
\]
which completes the proof.
\end{proof}

Now, we are ready to prove our main existence theorems (Theorems~\ref{T:PME} and \ref{T:divfree}).

\begin{proof}[Proof of Theorem~\ref{T:PME}]
Let $\mu$ be a nonnegative finite measure on $\Omega_T$. Then there exists a sequence of nonnegative functions $\{\mu_n\}\subset L^\infty(\Omega_T)$ such that
\[
\mu_n(\Omega_T):=\int_{\Omega_T}\mu_n\,dxdt \le \mu(\Omega_T)
\]
and
\[
\mu_n \rightharpoonup \mu
\qquad \text{weakly in the sense of measures.}
\]
Likewise, we choose a sequence $\{V_n\}\subset C^\infty(\Omega_{T})\cap L^\infty(\Omega_T)$ such that
\[
V_n \to V
\quad \text{strongly in } L_{x,t}^{q_1,q_2}(\Omega_T),
\]
and
\[
\sup_n \|V_n\|_{L_{x,t}^{q_1,q_2}(\Omega_T)}<\infty.
\]
When one of the exponents $q_1,q_2$ is infinite, this approximation is understood in finite lower exponents which are still admissible for the estimates below, together with the corresponding uniform endpoint bound. In particular, the endpoint $(q_1,q_2)=(\infty,2)$ for $m=1$ is handled by approximating in $L_{x,t}^{q,2}$ with a fixed sufficiently large finite $q$, while retaining the uniform $L_{x,t}^{\infty,2}$ bound.

For each $n$, let $u_n$ be a regular solution in the sense of Definition~\ref{reg sol} to the regularized problem
\begin{equation}\label{reg-PME}
\partial_t u_n-\Delta u_n^m+\nabla\cdot(u_nV_n)=\mu_n
\end{equation}
with zero initial and boundary data. Moreover, for the case of measure-valued initial data, a similar argument applies, see Remark~\ref{L0 rmk}.

Choose $\bar\alpha\in(1,2)$ sufficiently close to $2$ so that
\[
1<\kappa<\frac{\bar\alpha(2+md)}{2(d+1)}
\]
for some $\kappa>1$. This is possible because $m>1-\frac1d$.
Applying the estimates from Section~\ref{S:apriori-est} and Theorem~\ref{T:final_est}, we obtain
\[
\sup_{t\in(0,T)}\int_\Omega u_n(x,t)\,dx
\le
\mu_n(\Omega_T)
\le
\mu(\Omega_T),
\]
and
\[
\sup_n
\int_{\Omega_T}\left|\nabla u_n^{\frac m2}\right|^{\bar\alpha}\,dxdt
\le C.
\]
Moreover, Lemma~\ref{T:pSobolev} yields
\[
\sup_n \|u_n\|_{L^r(\Omega_T)}\le C,
\qquad
r:=\frac{\bar\alpha(2+md)}{2d}.
\]

Next, H\"{o}lder's inequality gives
\[
\begin{aligned}
\int_{\Omega_T} |\nabla u_n|^\kappa\,dxdt
&=
c(m)\int_{\Omega_T}
\left|\nabla u_n^{\frac m2}\right|^\kappa
u_n^{\frac{\kappa(2-m)}{2}}
\,dxdt \\
&\le
c(m)
\left(\int_{\Omega_T}\left|\nabla u_n^{\frac m2}\right|^{\bar\alpha}\,dxdt\right)^{\frac{\kappa}{\bar\alpha}}
\left(\int_{\Omega_T}u_n^{\frac{\bar\alpha \kappa(2-m)}{2(\bar\alpha-\kappa)}}\,dxdt\right)^{1-\frac{\kappa}{\bar\alpha}}.
\end{aligned}
\]
Since
\[
\frac{\bar\alpha \kappa(2-m)}{2(\bar\alpha-\kappa)}
\le
\frac{\bar\alpha(2+md)}{2d}=r
\qquad \Longleftrightarrow \qquad
\kappa\le \frac{\bar\alpha(2+md)}{2(d+1)},
\]
the right-hand side is uniformly bounded in $n$. Hence
\[
\sup_n \|\nabla u_n\|_{L^\kappa(\Omega_T)}<\infty.
\]
Because $u_n=0$ on $\partial\Omega\times(0,T)$, Poincar\'e's inequality implies
\[
\sup_n \|u_n\|_{L^\kappa(0,T;W_0^{1,\kappa}(\Omega))}<\infty.
\]

By Lemma~\ref{time derivative},
\[
\sup_n \|\partial_t u_n\|_{L^1(0,T;W^{-1,1}(\Omega))}<\infty.
\]
We now apply Lemma~\ref{AL} with
\[
X_0=W_0^{1,\kappa}(\Omega), \qquad X=L^1(\Omega), \qquad X_1=W^{-1,1}(\Omega).
\]
Since
\[
W_0^{1,\kappa}(\Omega)\Subset L^1(\Omega)
\qquad \text{and} \qquad
L^1(\Omega)\hookrightarrow W^{-1,1}(\Omega),
\]
we conclude that, up to a subsequence,
\[
u_n \to u
\quad \text{strongly in } L^1(\Omega_T)
\]
and almost everywhere in $\Omega_T$.

On the other hand, after passing to a further subsequence,
\[
\nabla u_n^{\frac m2}\rightharpoonup w
\quad \text{weakly in } L^{\bar\alpha}(\Omega_T;\mathbb R^d),
\]
and
\[
u_n \rightharpoonup u
\quad \text{weakly in } L^r(\Omega_T).
\]
By interpolation between the strong convergence in $L^1(\Omega_T)$ and the uniform bound in $L^r(\Omega_T)$,
\[
u_n \to u
\quad \text{strongly in } L^\gamma(\Omega_T)
\quad \text{for every } 1\le \gamma<r.
\]
In particular,
\[
u_n^{\frac m2}\to u^{\frac m2}
\quad \text{strongly in } L^{\frac{\bar\alpha}{\bar\alpha-1}}(\Omega_T).
\]

To identify the weak limit, let $\eta\in C_c^\infty(\Omega_T)$. Then
\[
\int_{\Omega_T} w \eta\,dxdt
=
\lim_{n\to\infty}
\int_{\Omega_T}\nabla u_n^{\frac m2}\eta\,dxdt
=
-\lim_{n\to\infty}
\int_{\Omega_T}u_n^{\frac m2}\,\nabla\eta\,dxdt.
\]
Since $u_n^{\frac m2}\to u^{\frac m2}$ strongly in $L^{\frac{\bar\alpha}{\bar\alpha-1}}(\Omega_T)$, we obtain
\[
\int_{\Omega_T} w\eta\,dxdt
=
-\int_{\Omega_T}u^{\frac m2}\,\nabla\eta\,dxdt.
\]
Hence
\[
w=\nabla u^{\frac m2}
\qquad \text{in the sense of distributions.}
\]

It remains to pass to the limit in the weak formulation for \eqref{reg-PME}. The time derivative term and the measure term are standard. For the diffusion term, we write
\[
\nabla u_n^m = 2u_n^{\frac m2}\nabla u_n^{\frac m2},
\]
and use the strong convergence of $u_n^{m/2}$ together with the weak convergence of $\nabla u_n^{m/2}$ to deduce
\[
\nabla u_n^m \rightharpoonup \nabla u^m
\quad \text{weakly in } L^1(\Omega_T;\mathbb R^d).
\]
For the drift term, let
\[
r_1=\frac{q_1}{q_1-1},
\qquad
r_2=\frac{q_2}{q_2-1}.
\]
Since $(r_1,r_2)$ satisfies \eqref{Lr1r2}, Lemma~\ref{L:V}, together with the uniform bounds above, yields
\[
\sup_n \|u_n\|_{L_{x,t}^{r_1,r_2}(\Omega_T)}<\infty.
\]
Hence, up to a subsequence,
\[
u_n \rightharpoonup u
\quad \text{weakly in } L_{x,t}^{r_1,r_2}(\Omega_T).
\]
Since $V_n\to V$ strongly in $L_{x,t}^{q_1,q_2}(\Omega_T)$, it follows that
\[
u_nV_n \rightharpoonup uV
\quad \text{weakly in } L^1(\Omega_T;\mathbb R^d).
\]
Passing to the limit in the weak formulation, we conclude that $u$ is a weak solution of \eqref{PME} in the sense of Definition~\ref{D:WS}.
\end{proof}

\begin{proof}[Proof of Theorem~\ref{T:divfree}]
We note that a naive mollification of a divergence-free vector field on $\Omega$ does not in general remain divergence-free on the whole domain $\Omega$. Therefore, instead of working directly on $\Omega$, we use an interior exhaustion.

Let $\{\Omega_n\}_{n=1}^\infty$ be a sequence of bounded smooth domains such that
\[
\overline{\Omega_n}\subset \Omega_{n+1}\Subset \Omega,
\qquad
\bigcup_{n=1}^\infty \Omega_n=\Omega.
\]
Set $
\Omega_{n,T}:=\Omega_n\times(0,T).$
Let $\mu$ be a nonnegative finite measure on $\Omega_T$. Then there exists a sequence of nonnegative functions
$
\{\mu_n\}\subset C_c^\infty(\Omega_{n,T})
$
such that
\[
\mu_n(\Omega_{n,T})=\int_{\Omega_{n,T}}\mu_n\,dxdt \le \mu(\Omega_T)
\]
and $\mu_n \rightharpoonup \mu$ weakly in the sense of measures on $\Omega_T$.
Let $\widetilde V$ denote the extension of $V$ by zero outside $\Omega_T$.
Choose $\varepsilon_n>0$ so that $
\varepsilon_n < \frac12 \operatorname{dist}(\Omega_n,\partial\Omega),
$
and let $\rho_{\varepsilon_n}^x$ and $\eta_{\varepsilon_n}^t$ be standard mollifiers in space and time.
Define
\[
V_n:=\eta_{\varepsilon_n}^t *_t \left(\rho_{\varepsilon_n}^x *_x \widetilde V\right)
\qquad \text{in } \Omega_{n,T}.
\]
Then
$
V_n\in C^\infty(\Omega_{n,T})\cap L^\infty(\Omega_{n,T}),
$
and
$
V_n \to V
$ strongly in the ambient space corresponding to either \eqref{V:uV01} or \eqref{V:uV02}, locally in $\Omega_T$.
Moreover, \( \nabla\cdot V_n=0 \) in $\Omega_{n,T}$ for every $n$.

For each $n$, let $u_n$ be a regular solution in $\Omega_{n,T}$ of
\begin{equation}\label{reg-PME_n}
\partial_t u_n-\Delta u_n^m+\nabla\cdot(u_nV_n)=\mu_n
\qquad \text{in } \Omega_{n,T},
\end{equation}
with zero initial data and zero lateral boundary data on
\(
\partial\Omega_n\times(0,T).
\)
We extend $u_n$ by zero from $\Omega_{n,T}$ to $\Omega_T$, and still denote the extension by $u_n$.

Fix $\bar\alpha\in(1,2)$ such that
\begin{equation}\label{alpha-star-divfree}
\bar\alpha>
\max\left\{
\frac{2d}{2+md},
\,
\frac{2}{2-m} \chi_{\{m<1\}},
\,
\frac{2(1+md)}{2+md} \chi_{\{m\ge1\}}
\right\}.
\end{equation}
This is possible since $m>(1-\frac{2}{d})_+$.
By Theorem~\ref{T:final_est}\,\textup{(iii)}, applied on $\Omega_{n,T}$, we obtain
\begin{equation}\label{divfree-est-1}
\sup_{t\in(0,T)}\int_{\Omega_n} u_n(x,t)\,dx
\le
\mu_n(\Omega_{n,T})
\le
\mu(\Omega_T),
\end{equation}
and
\begin{equation}\label{divfree-est-2}
\sup_n
\int_{\Omega_{n,T}}
\left|\nabla u_n^{\frac m2}\right|^{\bar\alpha}\,dxdt
\le C.
\end{equation}
Since $\bar\alpha>\frac{2d}{2+md}$, Lemma~\ref{T:pSobolev} yields
\begin{equation}\label{divfree-est-3}
\sup_n \|u_n\|_{L^p(\Omega_{n,T})}\le C,
\qquad
p:=\frac{\bar\alpha(2+md)}{2d}>1.
\end{equation}

We now prove compactness. The argument is divided according to the diffusion
range.

\smallskip
\noindent
\underline{\emph{Case 1: $1\le m\le2$.}}
Choose $\kappa$ such that
\[
1<\kappa<\frac{\bar\alpha(2+md)}{2(d+1)}.
\]
Exactly as in the proof of Theorem~\ref{T:PME}, H\"{o}lder's inequality gives
\[
\sup_n \|\nabla u_n\|_{L^\kappa_{\rm loc}(\Omega_T)}<\infty.
\]
Hence, for every $U\Subset\Omega$ and every $0<t_1<t_2<T$,
\[
\sup_n \|u_n\|_{L^\kappa(t_1,t_2;W^{1,\kappa}(U))}<\infty.
\]
Moreover, by Lemma~\ref{time derivative},
\[
\sup_n \|\partial_t u_n\|_{L^1(t_1,t_2;W^{-1,1}(U))}<\infty.
\]
Therefore, by Lemma~\ref{AL}, after passing to a subsequence,
\[
u_n\to u
\quad \text{strongly in }L^1_{\rm loc}(\Omega_T)
\]
and almost everywhere in $\Omega_T$.

\smallskip
\noindent
\underline{\emph{Case 2: $\left(1-\frac{2}{d}\right)_+<m<1$.}}
For $k\in\mathbb N$, define
\[
w_n^{(k)}:=T_k(u_n):=\min\{u_n,k\}.
\]
Since $0<m<1$, we have
\[
|\nabla w_n^{(k)}|
=
\frac{2}{m}\left(w_n^{(k)}\right)^{1-\frac m2}
\left|\nabla \left(w_n^{(k)}\right)^{\frac m2}\right|
\le
C(k,m)\left|\nabla u_n^{\frac m2}\right|.
\]
Hence, for every $U\Subset\Omega$ and every $0<t_1<t_2<T$,
\begin{equation}\label{divfree-trunc-grad}
\sup_n \|w_n^{(k)}\|_{L^{\bar\alpha}(t_1,t_2;W^{1,\bar\alpha}(U))}\le C(k,U,t_1,t_2).
\end{equation}
At this point we use the truncation theory from Sections~3 and 4.3 in \cite{BDG15}. In the present divergence-free setting, the drift term disappears in the truncation estimates exactly as in the proof of Lemma~\ref{L:divfree:apriori}. Therefore the arguments of Lemmas~3.1--3.4 and Section~4.3.1--4.3.2 in \cite{BDG15} apply to the truncations $T_k(u_n)$ and yield the local time-derivative estimate
\begin{equation}\label{divfree-trunc-time}
\sup_n
\|\partial_t w_n^{(k)}\|_{L^1(t_1,t_2;W^{-1,1}(U))}
\le
C(k,U,W,t_1,t_2, \|V\|_{L^{1}(\Omega_T)})
\end{equation}
for every $U\Subset W\Subset\Omega$ and every $0<t_1<t_2<T$. Consequently, for every fixed $k\in\mathbb N$, Lemma~\ref{AL} implies that, after passing to a subsequence,
\[
T_k(u_n)\to T_k(u)
\quad \text{strongly in }L^1_{\rm loc}(\Omega_T).
\]
Repeating the diagonal construction exactly as in Section~4.3.2 of \cite{BDG15}, we conclude that
\[
u_n\to u
\quad \text{strongly in }L^1_{\rm loc}(\Omega_T)
\]
and almost everywhere in $\Omega_T$.

\smallskip
\noindent
\underline{\emph{Case 3: $m>2$.}}
In this case, the direct estimate of $\nabla u_n$ used in Case~1 is not
available because $u_n^{1-\frac m2}$ is singular near the zero set of
$u_n$. We therefore appeal to the compactness theory for the porous medium
equation with measure data in \cite{BDG13}. As in Case~2, the
divergence-free condition allows us to rewrite the drift contribution in
the local truncation estimates as a lower-order cutoff term. This term is
controlled locally by the integrability of $V_n$, and therefore it does not
affect the compactness argument.

Consequently, the local compactness method of \cite{BDG13} applies to the
sequence $\{u_n\}$ and gives, up to a subsequence,
\[
u_n\to u
\quad \text{strongly in }L^1_{\rm loc}(\Omega_T)
\]
and almost everywhere in $\Omega_T$.

\smallskip

In all cases, we have now obtained
\[
u_n\to u
\quad \text{strongly in }L^1_{\rm loc}(\Omega_T)
\]
and almost everywhere in $\Omega_T$.
Interpolating with \eqref{divfree-est-3}, we also obtain
\begin{equation}\label{divfree-strong-gamma}
u_n\to u
\quad \text{strongly in }L^\gamma(\Omega_T)
\quad \text{for every }1\le \gamma<p.
\end{equation}

From \eqref{divfree-est-2}, after passing to a further subsequence if necessary, we may assume that
\[
\nabla u_n^{\frac m2}\rightharpoonup z
\quad \text{weakly in }L^{\bar\alpha}_{\rm loc}(\Omega_T;\mathbb R^d).
\]
Since \eqref{alpha-star-divfree} implies
\(
\frac{m\bar\alpha}{2(\bar\alpha-1)}<p,
\)
the strong convergence \eqref{divfree-strong-gamma} yields
\[
u_n^{\frac m2}\to u^{\frac m2}
\quad \text{strongly in }L^{\frac{\bar\alpha}{\bar\alpha-1}}_{\rm loc}(\Omega_T).
\]
Let $\eta\in C_c^\infty(\Omega_T)$. Then
\[
\int_{\Omega_T} z\eta\,dxdt
=
\lim_{n\to\infty}\int_{\Omega_T}\nabla u_n^{\frac m2}\eta\,dxdt
=
-\lim_{n\to\infty}\int_{\Omega_T}u_n^{\frac m2}\,\nabla\eta\,dxdt
=
-\int_{\Omega_T}u^{\frac m2}\,\nabla\eta\,dxdt.
\]
Hence $z=\nabla u^{\frac m2}$ in the sense of distributions.

Let $\varphi \in C^{\infty} \left(\overline{\Omega} \times [0, T]\right)$ which vanishes on $\partial \Omega \times (0,T)$ and $\varphi(\cdot,T)=0$. 
We cannot use $\varphi$ directly as a test function in the approximate
problem on $\Omega_n$, since $\varphi$ does not necessarily vanish on
$\partial\Omega_n$. By using a suitable spatial cutoff function, we can construct a sequence of $\{\phi_j\} \subset C_c^\infty(\Omega\times[0,T))$ with $\phi_j(\cdot,T)=0$ such that $\varphi_j\to\varphi$ in $C^1_{\rm loc}(\Omega_T)$, and
$\varphi_j$, $(\varphi_j)_t$, $\nabla\varphi_j$ are uniformly bounded. Then for each fixed $j$, the function
$\varphi_j$ is admissible in the weak formulation for $u_n$ on
$\Omega_{n,T}$ for all sufficiently large $n$.
Testing the approximate equation by $\varphi_j$ and passing to the limit
$n\to\infty$, exactly as in the proof of Theorem~\ref{T:PME}, gives
\[
\int_{\Omega_T}
\left\{
-u(\varphi_j)_t+\nabla u^m\cdot\nabla\varphi_j
-uV\cdot\nabla\varphi_j
\right\}\,dxdt
=
\int_{\Omega_T}\varphi_j\,d\mu .
\]
Finally, letting $j\to\infty$, we obtain that $u$ is a weak solution of \eqref{PME} in $\Omega_T$. This completes the proof.
\end{proof}


\section{Sharpness of the general drift threshold}
\label{S:sharpness-divfree}

In this section, we discuss two complementary phenomena concerning general
non-divergence-free drift fields. First, we show that the lower threshold
\[
m>1-\frac{1}{d}
\]
in the general drift case is sharp. More precisely, in dimension \(d=2\),
the improved range available in the divergence-free case cannot be extended
to arbitrary drift fields by imposing only the corresponding integrability
condition on \(V\).

Second, we record an additional obstruction in the highly degenerate range
\(m>2\). This obstruction is different from the sharpness of the lower
threshold. It shows that, without the divergence-free cancellation, even a
rather integrable drift may cancel the diffusion term and destroy the
gradient estimate for \(u^{\frac{m}{2}}\). This should be interpreted as a
counterexample to the gradient estimate, not as a nonexistence result.

\subsection{Sharpness of the lower threshold \(m>1-\frac{1}{d}\)}
\label{S:sharpness-lower}

We first recall that, even in the drift-free case \(V=0\), the lower bound
\[
m>\left(1-\frac{2}{d}\right)_+
\]
is intrinsic to the measure data theory. This is reflected by the Barenblatt solutions for the porous medium equation with a Dirac mass on the right-hand side; see \cite{BDG13,BDG15}. Thus the lower bound in Theorem~\ref{T:divfree} cannot be improved in general, even without drift.

The main point of this subsection is different. We show that, once a general non-divergence-free drift is allowed, the stronger lower bound
\[
m>1-\frac{1}{d}
\]
in Theorem~\ref{T:PME} is sharp. More precisely, the improved range in Theorem~\ref{T:divfree} cannot be extended to arbitrary drift fields by imposing only the same integrability scale on \(V\).

For simplicity, we present the construction only in dimension \(d=2\). In this case, Theorem~\ref{T:divfree} allows the whole range
\(
m>\left(1-\frac{2}{d}\right)_+=0
\)
when \(V\) is divergence-free, whereas Theorem~\ref{T:PME} for a general drift field requires
\(
m>1-\frac{1}{d}=\frac{1}{2}.
\)
The following example shows that this gap is not merely a technical artifact of the proof. For every
\[
0<m\le \frac{1}{2},
\]
we construct a non-divergence-free drift field satisfying the same isotropic integrability scale as in the divergence-free theory, together with a nonnegative finite Radon measure on the right-hand side, for which the gradient estimate
\[
\nabla u^{\frac{m}{2}}\in L^\alpha
\]
fails for some \(\alpha\in(0,2)\).

Let us first explain the choice of the integrability exponent for the drift.
In
dimension \(d=2\), the divergence-free admissible condition
\eqref{sub_S_divfree}, in the isotropic case 
\((q_1,q_2)=(p,p)\), becomes
\begin{equation}\label{isotropic_p}
\frac{2}{p}+\frac{2m}{p}<2m
\quad \Longleftrightarrow \quad
p>\frac{m+1}{m}.
\end{equation}
Thus, if a drift field \(V\in L^p_{x,t}\) with
\(p>\frac{m+1}{m}\) were divergence-free, then it would satisfy the
corresponding isotropic admissible condition in Theorem~\ref{T:divfree}.
In the example below, we choose \(p\) exactly in this range and construct a
drift field
\[
V\in L^p(Q_1)
\]
which is not divergence-free. Therefore, the failure of the gradient estimate
is not caused by insufficient integrability of \(V\), but by the absence of
the cancellation structure \(\nabla\cdot V=0\).

\begin{example}
\label{ex:non-divfree-counterexample}
Let
\[
D_1:=\{x\in\mathbb R^2: |x|<1\},
\qquad
Q_1:=D_1\times(0,1).
\]
Assume that
\[
0<m\le \frac{1}{2}.
\]
Let \(p\) be any finite exponent in the admissible range
\[
p>\frac{m+1}{m},
\]
and set
\[
q:=2-\frac{2}{p},
\qquad
s:=qm.
\]
Then \(1<q<2\). In particular, \(r^{-q}\) is integrable near the origin in
two dimensions.

We shall construct a nonnegative function \(u\), a non-divergence-free drift
field \(V\in L^p(Q_1)\), and a nonnegative finite Radon measure
\(\mu\) such that
\[
u_t-\Delta u^m+\nabla\cdot(uV)=\mu
\qquad
\text{in } \, Q_1,
\]
with zero initial and lateral boundary data, but
\[
\int_{Q_1}
\left|\nabla u^{\frac{m}{2}}\right|^{\alpha_0}\,dxdt
=
\infty
\]
for some \(\alpha_0\in(0,2)\).

Choose \(L>s\), and define
\[
g(r):=r^{-s}-r^L,
\qquad
h(r):=g(r)^{\frac{1}{m}},
\qquad
0<r<1.
\]
Since \(r^{-s}>1\) and \(r^L<1\) for \(0<r<1\), we have \(g(r)>0\).
Moreover,
\(
g(1)=0,
\)
and near \(r=0\),
\[
g(r)\sim r^{-s},
\qquad
h(r)\sim r^{-q}.
\]
Near the boundary \(r=1\), since
\(
g'(1)=-(s+L)<0,
\)
we have
\[
g(r)\sim (s+L)(1-r),
\qquad
h(r)\sim C(1-r)^{\frac{1}{m}}.
\]
Thus \(h\in L^1(D_1)\), \(h>0\) in \(D_1\), and \(h=0\) on \(\partial D_1\).

Next choose
\[
0<\gamma<\frac{1}{p(1-m)}
\]
and set
\[
a(t):=e^t t^\gamma,
\qquad
u(x,t):=a(t)h(|x|).
\]
Then
\[
u(\cdot,0)=0
\qquad
\text{in }L^1(D_1),
\]
because \(a(t)\to0\) as \(t\searrow0\) and \(h\in L^1(D_1)\). Also,
\[
u=0
\qquad
\text{on }\partial D_1\times(0,1),
\]
because \(h(r)\to0\) as \(r\nearrow1\).

We now construct the drift field. Define
\[
B(r):=-\int_r^1 \tau h(\tau)\,d\tau,
\qquad
W_0(x):=\frac{B(|x|)}{|x|^2}x.
\]
Writing \(r=|x|\) and \(e_r=\frac{x}{|x|}\), this becomes
\[
W_0(x)=\frac{B(r)}{r}e_r.
\]
Since \(h\in L^1(D_1)\), the number
\[
B(0)=-\int_0^1 \tau h(\tau)\,d\tau
\]
is finite and negative.

For \(r>0\), the two-dimensional radial divergence formula gives
\[
\nabla\cdot(A(r)e_r)
=
\frac{1}{r}\frac{d}{dr}\left(rA(r)\right).
\]
Taking \(A(r)=\frac{B(r)}{r}\), we obtain
\[
\nabla\cdot W_0
=
\frac{1}{r} B'(r)
=
h(r)
\qquad
\text{in }D_1\setminus\{0\},
\]
because \(B'(r)=rh(r)\).

However, \(W_0\) has a singularity at the origin. In the sense of
distributions on \(D_1\), we have
\[
\nabla\cdot W_0=h-M\delta_0,
\]
where \(\delta_0\) denotes the Dirac mass at the origin in the spatial variable and 
\[
M:=2\pi\int_0^1 r h(r)\,dr>0.
\]
Indeed, for every \(\psi\in C_c^\infty(D_1)\), integrating by parts on
\(D_1\setminus\overline{B_\varepsilon}\), using \(B(1)=0\), and then
letting \(\varepsilon\searrow0\), gives
\[
-\int_{D_1}W_0\cdot\nabla\psi\,dx
=
\int_{D_1}h\psi\,dx
+
2\pi B(0)\psi(0).
\]
Since \(2\pi B(0)=-M\), the distributional identity follows.

We next choose a radial cutoff function. Since
\[
\Delta g
=
g''+\frac{1}{r} g'
=
s^2r^{-s-2}-L^2r^{L-2},
\]
and \(L>s\), we have
\[
\Delta g(1)=s^2-L^2<0.
\]
Hence there exists \(R\in(0,1)\) such that
\[
\Delta g\le0
\qquad
\text{on }[R,1).
\]
Choose \(\eta\in C^\infty([0,1))\) such that
\[
0\le\eta\le1,
\qquad
\eta=1\quad\text{on }[0,R],
\qquad
\eta=0\quad\text{near }1,
\qquad
\eta'\le0.
\]
Define
\[
V(x,t):=
\frac{\eta(|x|)\nabla u^m(x,t)-a(t)W_0(x)}{u(x,t)}.
\]
Then
\[
uV=\eta\nabla u^m-a(t)W_0.
\]

We claim first that
\[
V\in L^p(Q_1).
\]
Since
\(
u^m=a(t)^m g(r),
\)
we have
\[
\nabla u^m=a(t)^m\nabla g.
\]
Therefore
\[
V
=
\eta a(t)^{m-1}\frac{\nabla g}{h}
-
\frac{W_0}{h}.
\]

Near \(r=0\), we have
\[
h(r)\sim r^{-q},
\qquad
|\nabla g(r)|\sim r^{-s-1}=r^{-qm-1}.
\]
Hence
\[
\frac{|\nabla g|}{h}
\sim
r^{-1+q(1-m)}.
\]
Thus the spatial \(L^p\)-integral near \(r=0\) behaves like
\[
\int_0 r^{1-p+pq(1-m)}\,dr.
\]
This integral is finite if
\(
p\left(1-q(1-m)\right)<2.
\)
Since \(q=2-\frac{2}{p}\) and \(0<m\le\frac{1}{2}\), we have 
\[
p\left(1-q(1-m)\right)=p(-1+2m)+2(1-m)\le2(1-m)<2.
\]
Therefore the spatial singularity of \(\frac{\nabla g}{h}\) is \(L^p\)-integrable.

The corresponding time factor is
\[
a(t)^{m-1}
=
e^{(m-1)t}t^{-\gamma(1-m)}.
\]
Thus
\[
a^{m-1}\in L^p(0,1),
\]
because
\(
\gamma p(1-m)<1.
\)
Consequently,
\[
\eta a(t)^{m-1}\frac{\nabla g}{h}\in L^p(Q_1).
\]

It remains to estimate \(\frac{W_0}{h}\). Near \(r=0\), since \(B(r)\to B(0)\ne0\),
\[
|W_0(x)|\sim \frac{C}{r}.
\]
Therefore
\[
\frac{|W_0|}{h}
\sim
r^{q-1}.
\]
Since \(q>1\), this is bounded near the origin.

Near \(r=1\), we have
\[
h(r)\sim C(1-r)^{\frac{1}{m}}.
\]
Moreover,
\[
B(r)
=
-\int_r^1 \tau h(\tau)\,d\tau
\sim
-C(1-r)^{1+\frac{1}{m}}.
\]
Hence
\[
|W_0(x)|\sim C(1-r)^{1+\frac{1}{m}},
\]
and therefore
\[
\frac{|W_0|}{h}
\sim
C(1-r).
\]
Thus \(\frac{W_0}{h}\) is bounded near the boundary. We conclude that
\[
V\in L^p(Q_1).
\]

Moreover, \(V\) is not divergence-free. Indeed, near the origin the leading
radial part of \(V\) is
\[
a(t)^{m-1}\frac{\nabla g}{h}
\sim
-C(t)r^{-1+q(1-m)}e_r.
\]
Therefore,
\[
\nabla\cdot V
\sim
-C(t)q(1-m)r^{-2+q(1-m)}
\]
near \(r=0\), which is not identically zero.

We now compute the equation satisfied by \(u\). Since
\[
u_t=a'(t)h,
\qquad
u^m=a(t)^m g,
\]
and
\[
\nabla\cdot(a(t)W_0)
=
a(t)h-a(t)M\delta_0,
\]
we obtain, in the sense of distributions,
\[
\begin{aligned}
u_t-\Delta u^m+\nabla\cdot(uV)
&=
a'h-a^m\Delta g
+
\nabla\cdot\left(\eta a^m\nabla g-aW_0\right)\\
&=
a'h-a^m\Delta g
+
a^m\eta\Delta g
+
a^m\nabla\eta\cdot\nabla g
-a h
+aM\delta_0\\
&=
(a'-a)h
-a^m(1-\eta)\Delta g
+
a^m\nabla\eta\cdot\nabla g
+
aM\delta_0.
\end{aligned}
\]
Thus
\[
u_t-\Delta u^m+\nabla\cdot(uV)
=
f+a(t)M\delta_0\otimes dt,
\]
where \(\delta_0\otimes dt\) denotes the product of the spatial Dirac mass at the origin and the Lebesgue measure in time, and
\[
f:=
(a'-a)h
-a^m(1-\eta)\Delta g
+
a^m\nabla\eta\cdot\nabla g.
\]

We claim that
\[
f\ge0,
\qquad
f\in L^1(Q_1).
\]
First,
\[
a(t)=e^t t^\gamma
\]
implies
\[
a'(t)-a(t)
=
e^t\gamma t^{\gamma-1}\ge0.
\]
Hence
\[
(a'-a)h\ge0.
\]
Next, the term \(1-\eta\) is supported where \(r\ge R\), and on this region
\[
\Delta g\le0.
\]
Therefore
\[
-a^m(1-\eta)\Delta g\ge0.
\]
Finally, since both \(\eta\) and \(g\) are radial,
\[
\nabla\eta\cdot\nabla g=\eta'(r)g'(r).
\]
But
\[
\eta'(r)\le0,
\qquad
g'(r)=-sr^{-s-1}-Lr^{L-1}<0.
\]
Thus
\[
\nabla\eta\cdot\nabla g\ge0.
\]
Consequently,
\[
f\ge0.
\]

The integrability of \(f\) is also straightforward. Since
\[
a'(t)-a(t)=e^t\gamma t^{\gamma-1}\in L^1(0,1)
\]
and \(h\in L^1(D_1)\), we have
\[
(a'-a)h\in L^1(Q_1).
\]
The remaining terms are supported away from the origin, where \(g\),
\(\nabla g\), and \(\Delta g\) are bounded. Hence
\[
f\in L^1(Q_1).
\]
Therefore
\[
\mu:=f\,dxdt+a(t)M\delta_0\otimes dt
\]
is a nonnegative finite Radon measure on \(Q_1\).
Here \(\delta_0\otimes dt\) denotes the product of the spatial Dirac mass at the origin and the Lebesgue measure in time.

Finally, we show that the gradient estimate fails. Near \(r=0\),
\[
u^{\frac{m}{2}}
=
a(t)^{\frac{m}{2}}h(r)^{\frac{m}{2}}
\sim
a(t)^{\frac{m}{2}}r^{-\frac{qm}{2}}.
\]
Thus
\[
\left|\nabla u^{\frac{m}{2}}\right|
\sim
a(t)^{\frac{m}{2}}r^{-1-\frac{qm}{2}}.
\]
For \(\alpha>0\), the spatial integral near the origin behaves like
\[
\int_0 r^{1-\alpha\left(1+\frac{qm}{2}\right)}\,dr.
\]
This integral diverges whenever
\[
1-\alpha\left(1+\frac{qm}{2}\right)\le -1
\quad \Longleftrightarrow \quad
\alpha\ge \frac{2}{1+\frac{qm}{2}}.
\]
Since \(q>0\) and \(m>0\), the denominator $1+\frac{qm}{2}$ is larger than \(1\), and hence we may choose
\[
\alpha_0:=
\frac{2}{1+\frac{qm}{2}}\in(0,2).
\]
Therefore,
\[
\int_{Q_1}
\left|\nabla u^{\frac{m}{2}}\right|^{\alpha_0}\,dxdt
=
\infty.
\]
\end{example}

By construction, the drift field \(V\) belongs to the same isotropic integrability scale as the divergence-free admissible class in Theorem~\ref{T:divfree}. Nevertheless, the gradient estimate fails because the drift is not divergence-free. This shows that the cancellation coming from \(\nabla\cdot V=0\) is
essential. 

In particular, below the general-drift threshold
\(
m=1-\frac{1}{d}=\frac{1}{2}
\)
in two dimensions, the estimates obtained in the divergence-free case
cannot be expected for arbitrary drift fields under the same integrability
assumptions on \(V\). Therefore, the condition
\[
m>1-\frac{1}{d}
\]
in Theorem~\ref{T:PME} is sharp with respect to general, non-divergence-free
drifts.

Consequently, the threshold \(m>1-\frac{1}{d}\) in the general drift case is
not only a limitation of our compactness argument, but is sharp in the
following sense: below this threshold, even a drift field satisfying the
corresponding divergence-free integrability scale may destroy the gradient
estimate unless the cancellation condition \(\nabla\cdot V=0\) is imposed.

Together with the sharpness in the drift-free case, this gives the following picture: the lower bound
\(
m>\left(1-\frac{2}{d}\right)_+
\)
is intrinsic to the measure data problem itself, whereas the stronger threshold \(m>1-\frac{1}{d}\) is unavoidable for general drift fields. The divergence-free condition is precisely the structure that allows one to pass from the general-drift threshold to the optimal measure-data threshold.

\subsection{A further obstruction in the range \(m>2\)}
\label{S:obstruction-m-greater-than-two}

The preceding example concerns the sharpness of the lower threshold
\(m>1-\frac{1}{d}\) for general drift fields. We now record a different
phenomenon in the highly degenerate range \(m>2\).

In the proof of Theorem~\ref{T:PME}, the upper restriction \(m\le2\) is
connected with the direct compactness argument for \(u\). Indeed, when
\(m>2\), the factor
\(
u^{1-\frac{m}{2}}
\)
becomes singular near the zero set of \(u\), and estimates for
\(\nabla u^{\frac{m}{2}}\) no longer directly yield compactness of \(u\).

Before giving the example, let us explain the choice of the integrability
condition on the drift. In dimension \(d=2\), the divergence-free admissible
condition \eqref{sub_S_divfree}, in the isotropic case \((q_1,q_2)=(p,p)\),
is exactly \eqref{isotropic_p}. Thus, if a drift field
\(V\in L^p_{x,t}\) with \(p>\frac{m+1}{m}\) were divergence-free, then it
would satisfy the corresponding isotropic admissible condition in
Theorem~\ref{T:divfree}.

The example below chooses \(p\) in this same admissible range, more precisely
\(
\frac{m+1}{m}<p<m,
\)
and constructs a non-divergence-free drift field
\(
V\in L^p(Q_1).
\)
The additional upper bound \(p<m\) is not part of the divergence-free
admissible condition; it is needed in the construction to make the
integrability of \(V\) compatible with the failure of the gradient estimate.
Thus the obstruction is not caused by insufficient integrability of \(V\),
but by the absence of the cancellation structure \(\nabla\cdot V=0\).

\begin{example}
\label{ex:m-greater-than-two-Lp-drift}
Let
\[
D_1:=\{x\in\mathbb R^2:|x|<1\},
\qquad
Q_1:=D_1\times(0,1),
\]
and assume that
\[
m>2.
\]
Let \(p\) satisfy
\[
\frac{m+1}{m}<p<m.
\]
As explained above, the lower bound is the isotropic divergence-free admissibility condition, while the upper bound is used in the construction.
Choose
\[
\frac{m(p-1)}{2p(m-1)}
<
\vartheta
<
\frac{1}{2}.
\]
This is possible precisely because \(p<m\).

Let \(\chi\in C_c^\infty(D_1)\) be a nonnegative cutoff function such that
\(
0\le \chi\le1
\), and 
\(
\chi\equiv1
\) in \(
D_{\frac{1}{2}}.
\)
Let \(\gamma>0\), and define
\[
u(x,t)
:=
t^\gamma
\left(
|x_1|^{2\vartheta}\chi(x)^2
\right)^{\frac{1}{m}}.
\]
Then
\(
u(\cdot,0)=0
\) in \(
L^1(D_1),
\)
and \(u\) has zero lateral boundary data, since \(\chi\) is compactly supported
in \(D_1\). Moreover,
\[
u^m(x,t)
=
t^{\gamma m}|x_1|^{2\vartheta}\chi(x)^2.
\]

Define the drift field by
\[
V(x,t):=
\begin{cases}
\dfrac{\nabla u^m(x,t)}{u(x,t)}, & u(x,t)>0,\\[2mm]
0, & u(x,t)=0.
\end{cases}
\]
Equivalently, on \(\{u>0\}\),
\[
V(x,t)
=
t^{\gamma(m-1)}
\frac{
\nabla\left(|x_1|^{2\vartheta}\chi(x)^2\right)
}{
\left(|x_1|^{2\vartheta}\chi(x)^2\right)^{\frac{1}{m}}
}.
\]
By construction,
\[
uV=\nabla u^m
\quad\text{a.e. in }Q_1.
\]
Therefore, 
\(u\) solves
\[
u_t-\Delta u^m+\nabla\cdot(uV)=\mu
\quad\text{in } Q_1
\]
in the sense of distributions,
where
\[
\mu
:=
\gamma t^{\gamma-1}
\left(
|x_1|^{2\vartheta}\chi(x)^2
\right)^{\frac{1}{m}}.
\]
Since
\[
\gamma t^{\gamma-1}\ge0,
\qquad
\int_0^1 t^{\gamma-1}\,dt<\infty,
\qquad
\left(
|x_1|^{2\vartheta}\chi(x)^2
\right)^{\frac{1}{m}}
\in L^1(D_1),
\]
the measure \(\mu\) is a nonnegative finite Radon measure on \(Q_1\).

We now verify that
\[
V\in L^p(Q_1).
\]
Near the set \(\{x_1=0\}\) and inside \(D_{\frac{1}{2}}\), where
\(\chi\equiv1\), we have
\[
u(x,t)=t^\gamma |x_1|^{\frac{2\vartheta}{m}},
\qquad
u^m(x,t)=t^{\gamma m}|x_1|^{2\vartheta}.
\]
Thus
\[
|V(x,t)|
\sim
t^{\gamma(m-1)}
|x_1|^{-1+\frac{2\vartheta(m-1)}{m}}
\]
near \(\{x_1=0\}\).
Indeed,
\[
\nabla\left(|x_1|^{2\vartheta}\chi^2\right)
=
2\vartheta\,\operatorname{sgn}(x_1)
|x_1|^{2\vartheta-1}\chi(x)^2 e_1
+
2|x_1|^{2\vartheta}\chi(x)\nabla\chi(x),
\]
which implies
\[
\left|
\frac{
\nabla\left(|x_1|^{2\vartheta}\chi(x)^2\right)
}{
\left(|x_1|^{2\vartheta}\chi(x)^2\right)^{\frac{1}{m}}
}
\right|
\le
C|x_1|^{-1+\frac{2\vartheta(m-1)}{m}}
+
C,
\]
where we used \(m>2\), so that the power 
\(
\chi^{1-\frac{2}{m}}
\)
appearing in the cutoff term is bounded.

Since
\(
t^{\gamma(m-1)}\in L^p(0,1),
\)
it remains to check the spatial singularity. We need
\[
\int_{-\varepsilon}^{\varepsilon}
|x_1|^{p\left(-1+\frac{2\vartheta(m-1)}{m}\right)}
\,dx_1<\infty.
\]
This is equivalent to
\[
p\left(1-\frac{2\vartheta(m-1)}{m}\right)<1 \quad \iff \quad
\vartheta>
\frac{m(p-1)}{2p(m-1)}.
\]
This is exactly our choice of \(\vartheta\). Hence
\(
V\in L^p(Q_1).
\)
Moreover, by construction, \(V\) is not divergence-free, since its leading component near \(\{x_1=0\}\) depends nontrivially on \(x_1\). 

Finally, we show that the gradient estimate fails. Since
\[
u^{\frac{m}{2}}
=
t^{\frac{\gamma m}{2}}
\left(
|x_1|^{2\vartheta}\chi(x)^2
\right)^{\frac{1}{2}}
=
t^{\frac{\gamma m}{2}}
|x_1|^\vartheta\chi(x),
\]
and since \(\chi\equiv1\) in \(D_{\frac{1}{2}}\), near \(\{x_1=0\}\) we have
\(
u^{\frac{m}{2}}
\sim
t^{\frac{\gamma m}{2}}
|x_1|^\vartheta.
\)
Therefore
\[
\left|\nabla u^{\frac{m}{2}}\right|
\sim
t^{\frac{\gamma m}{2}}
|x_1|^{\vartheta-1}.
\]
The spatial integral in the \(x_1\)-variable behaves like
\[
\int_{-\varepsilon}^{\varepsilon}
|x_1|^{\alpha(\vartheta-1)}\,dx_1.
\]
This integral diverges whenever
\(
\alpha(1-\vartheta)\ge1.
\)
Choose
\(
\alpha_0:=\frac{1}{1-\vartheta}.
\)
Since \(\vartheta<\frac{1}{2}\), we have
\(
1<\alpha_0<2.
\)
Consequently,
\[
\int_{Q_1}
\left|\nabla u^{\frac{m}{2}}\right|^{\alpha_0}
\,dxdt
=
\infty.
\]
\end{example}

The restriction \(p<m\) in the preceding construction is not accidental.
The condition \(V\in L^p(Q_1)\) requires
\(
\vartheta>
\frac{m(p-1)}{2p(m-1)},
\)
whereas the failure of the gradient estimate with some exponent
\(\alpha_0<2\) requires
\(
\vartheta<\frac{1}{2}.
\)
These two inequalities are compatible precisely when \(p<m\). Thus the
construction gives examples for every
\[
\frac{m+1}{m}<p<m,
\]
and in particular for \(p=2\) whenever \(m>2\).

The example shows that, without the divergence-free structure, the drift can
interact with the degenerate diffusion in a much stronger way than in the
divergence-free case. Indeed, the drift is chosen so that
\[
uV=\nabla u^m,
\]
and hence the drift contribution cancels the diffusion term. Therefore the
gradient estimate
\[
\nabla u^{\frac{m}{2}}\in L^\alpha(Q_1)
\quad
\text{for every }0<\alpha<2
\]
cannot be expected for arbitrary non-divergence-free drifts, even in the
range \(m>2\), under the above integrability condition on \(V\).

This is a different obstruction from the one in
Example~\ref{ex:non-divfree-counterexample}. The first example proves the
sharpness of the lower threshold \(m>1-\frac{1}{d}\) for general drifts. The
present example shows that, in the highly degenerate range \(m>2\), the
divergence-free cancellation is also essential for recovering the gradient
estimate available in Theorem~\ref{T:divfree}.

\section{Application}\label{S:Application}

In this section, we study, as an application, a porous medium equation with measure data coupled to the incompressible Navier--Stokes equations. More precisely, we consider
\begin{equation}\label{KF-10}
\rho_t-\Delta \rho^m +v\cdot \nabla \rho=\mu,
\end{equation}
\begin{equation}\label{KF-20}
v_t-\Delta v +(v\cdot \nabla) v+\nabla \pi=-\rho\nabla\phi,
\qquad {\rm div}\,v=0
\end{equation}
in $\Omega_T=\Omega\times (0, T)$, where $\Omega\subset\mathbb R^d$, $d=2,3$, is a bounded domain with smooth boundary, subject to the homogeneous Dirichlet boundary conditions
\[
\rho=0,\qquad v=0
\qquad \text{on } \partial\Omega\times(0,T).
\]
Here $\phi=\phi(x)$ is a given sufficiently regular potential, for instance a gravitational potential, and the initial data are assumed to satisfy
\[
\rho_0\in L^1(\Omega),
\qquad
v_0\in L^2(\Omega).
\]

The system \eqref{KF-10}--\eqref{KF-20} can be viewed as a simplified Keller--Segel--fluid type system with porous medium diffusion and measure data, namely
\[
\rho_t-\Delta \rho^m +v\cdot \nabla \rho+\chi\nabla\cdot(\rho\nabla c)=\mu,
\]
\[
c_t-\Delta c +v\cdot \nabla c=-c\rho,
\]
\[
v_t-\Delta v +(v\cdot \nabla) v+\nabla \pi=-\rho\nabla\phi,
\qquad {\rm div}\,v=0.
\]
If the chemical concentration is absent, that is, if $c=0$, then the above system reduces to \eqref{KF-10}--\eqref{KF-20}. In the case $\mu=0$, there are already several results on existence and regularity for Keller--Segel--fluid systems with porous medium diffusion; see, for example, \cite{CHKK17, HKK02, HZ21, Win15} and the references therein.

Our goal is to construct weak solutions to \eqref{KF-10}--\eqref{KF-20}. We first introduce the notion of weak solutions.

\begin{definition}\label{Weak-ks} Let $m>0$ and $\Omega\subset\R^d$, $d=2,3$ be a bounded domain with smooth boundary. We say that a pair of $(\rho, v)$ is a  weak solution of \eqref{KF-10}--\eqref{KF-20} in $\Omega_{T} := \Omega \times (0, T)$ with zero boundary data if the following are satisfied:
\begin{itemize}
	\item [(i)] It holds that
	\[
	\rho , \ \nabla \rho^m, \ \rho v, \, v\otimes v,\, \nabla v\in L_{x,t}^{1}.
	\]
	
	\item [(ii)] For any $\varphi \in C_{c}^{\infty} \left(\overline{\Omega} \times [0, T);\R\right)$ and $\psi \in C_{c}^{\infty} \left(\overline{\Omega} \times [0, T);\R^d\right)$ with $\rm{div}\,\psi=0$ both of which vanish on $\partial\Omega$, it holds that
	\[
	\int_{\Omega_T}\left\{-\rho\,\varphi_t+\nabla \rho^m\cdot \nabla\varphi-\rho v\cdot \nabla\varphi\right\}\,dxdt
	=
	\int_\Omega \rho_0\,\varphi(\cdot,0)\,dx
	+
	\int_{\Omega_T}\varphi\,d\mu,
	\]
	\[
	\int_{\Omega_T}\left\{-v\cdot\psi_t+\nabla v:\nabla\psi-(v\otimes v):\nabla\psi\right\}\,dxdt
	=
	\int_\Omega v_0\cdot\psi(\cdot,0)\,dx
	-
	\int_{\Omega_T}\rho\,\nabla\phi\cdot\psi\,dxdt.
	\]
\end{itemize}	
\end{definition}

To construct a weak solution, it is required to  develop an approximated system of \eqref{KF-10}--\eqref{KF-20}. Since the argument of regularization is somewhat similar as mentioned earlier in Section~\ref{S:Exist}, we provide a priori estimate in the proof for construction of weak solutions of \eqref{KF-10}--\eqref{KF-20}.

\begin{theorem}
Let  $\Omega\subset\R^d$, $d=2,3$ be a bounded domain with a smooth boundary. Assume that $\rho_0\in L^1(\Omega)$, $v_0\in L^2(\Omega)$, and let $\mu$ be a nonnegative finite measure on $\Omega_T$.
If $m>0$ when $d=2$ and if $m>\frac{2}{3}$ when $d=3$, then
there exists a pair of weak solutions $(\rho, v)$ in Definition \ref{Weak-ks}. Moreover, for any $0<\alpha<2$, a pair of weak solutions $(\rho, v)$ satisfies
\[
\sup_{t\in (0, T)} \int_{\Omega\times\{t\}} (\rho(\cdot, t) +\abs{v(\cdot, t)}^2)\,dx +\int_{\Omega_T} (\left|\nabla \rho^{\frac{m}{2}}\right|^{\alpha} +\abs{\nabla v}^2)\,dxdt\le C,
\]
where
\[
C=C\!\left(m,d,\alpha,\Omega,T,\|\nabla\phi\|_{L^\infty},
\|\rho_0\|_{L^1(\Omega)},\|v_0\|_{L^2(\Omega)},\mu(\Omega_T)\right).
\]
\end{theorem}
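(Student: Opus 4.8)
The strategy is to build an approximating sequence $(\rho_n, v_n)$ solving a regularized version of \eqref{KF-10}--\eqref{KF-20}, derive uniform a priori bounds, and pass to the limit. First I would mollify the data: replace $\mu$ by $\mu_n \in L^\infty(\Omega_T)$ with $\mu_n \rightharpoonup \mu$ weakly in the sense of measures and satisfying \eqref{sola2}, replace $\rho_0, v_0$ by smooth $\rho_{0,n}, v_{0,n}$ with $\rho_{0,n} \to \rho_0$ in $L^1(\Omega)$ and $v_{0,n} \to v_0$ in $L^2(\Omega)$, and regularize the diffusion ($\rho^m \mapsto (\rho+\tfrac1n)^m - (\tfrac1n)^m$ or similar) so that a classical/strong solution $(\rho_n, v_n)$ exists on $\Omega_T$ by standard parabolic fixed-point theory; this is where one uses that $\Omega$ is bounded with smooth boundary and that $\phi$ is regular. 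Throughout, the coupling is one-directional enough that $v_n$ solves a forced Navier--Stokes system with forcing $-\nabla\phi\,\rho_n$, and $\rho_n$ solves \eqref{PME} with drift $V_n = v_n$, which is divergence-free.

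\textbf{A priori estimates.} The heart of the matter is the energy estimate. Testing the $v_n$-equation with $v_n$ and using $\dv v_n = 0$ gives
\[
\sup_{t\in(0,T)}\int_\Omega |v_n(\cdot,t)|^2\,dx + \int_{\Omega_T}|\nabla v_n|^2\,dxdt \leq C\left(\|v_0\|_{L^2}^2 + \int_{\Omega_T}|\nabla\phi|\,|\rho_n|\,|v_n|\,dxdt\right),
\]
and by Lemma~\ref{L1} (in its divergence-free form, Lemma~\ref{L:divfree:apriori}) applied to $\rho_n$ we already have $\sup_t \int_\Omega \rho_n\,dx \leq \mu(\Omega_T)+\|\rho_0\|_{L^1}$ independently of $v_n$. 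The cross term is absorbed: since $\phi$ is fixed and smooth, $\int_{\Omega_T}|\nabla\phi|\rho_n|v_n|\,dxdt \leq \|\nabla\phi\|_{L^\infty}\int_0^T\|\rho_n\|_{L^1}^{1/2}\|\rho_n\|_{L^1}^{1/2}\|v_n\|_{L^2}\cdots$, or more cleanly one interpolates $\|\rho_n v_n\|_{L^1}$ using the $L^1$-bound on $\rho_n$ together with the parabolic Sobolev bound for $\rho_n$ from Lemma~\ref{T:pSobolev} (valid in the range $m>0$ for $d=2$, $m>\tfrac23$ for $d=3$, exactly the stated hypotheses, so that $2+d(m-1)>0$ and the relevant exponents are positive), and then uses Young's inequality to move a small multiple of $\int|\nabla v_n|^2$ and $\int|\nabla\rho_n^{m/2}|^\alpha$ to the left. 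This yields the claimed uniform bound
\[
\sup_{t\in(0,T)}\int_\Omega(\rho_n+|v_n|^2)\,dx + \int_{\Omega_T}\left(|\nabla\rho_n^{m/2}|^\alpha + |\nabla v_n|^2\right)\,dxdt \leq C\left(\|\rho_0\|_{L^1},\|v_0\|_{L^2},\mu(\Omega_T)\right),
\]
where the $\nabla\rho_n^{m/2}$ bound comes from Theorem~\ref{T:final_est}(iii) (the divergence-free case, $V$-independent), noting $v_n$ is divergence-free.

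\textbf{Compactness and passage to the limit.} With these bounds in hand: $\nabla\rho_n^{m/2}$ is bounded in $L^\alpha(\Omega_T)$, $\rho_n$ in $L^\infty(0,T;L^1)\cap L^{\alpha(2+md)/(2d)}(\Omega_T)$, and $v_n$ in $L^\infty(0,T;L^2)\cap L^2(0,T;W_0^{1,2})$. From the equations, $\partial_t\rho_n$ is bounded in $L^1(0,T;W^{-1,1}(\Omega))$ by Lemma~\ref{time derivative} (the divergence-free drift $v_n$ lies in $L^2(0,T;L^{2^*})$, which sits inside the admissible class \eqref{V:uV02} for the stated $m$), and $\partial_t v_n$ is bounded in a suitable negative Sobolev space by the standard Navier--Stokes argument (controlling $v_n\otimes v_n$ in $L^{4/d}$ via Ladyzhenskaya). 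Applying the Aubin--Lions Lemma~\ref{AL} twice gives $\rho_n \to \rho$ strongly in $L^1(0,T;L^p)$ for $p<\tfrac{\alpha md}{2(d-\alpha)}$ (hence in $L^\gamma(\Omega_T)$ for $\gamma<\tfrac{\alpha(2+md)}{2d}$), and $v_n \to v$ strongly in $L^2(\Omega_T)$. Strong convergence of $\rho_n$ and $v_n$ lets us pass to the limit in the nonlinear terms $\nabla\rho_n^m = \tfrac{2m}{m}\rho_n^{m/2}\nabla\rho_n^{m/2}$, $\rho_n v_n$, $v_n\otimes v_n$, and $\nabla\phi\,\rho_n$; weak convergence handles the linear terms and $\mu_n \rightharpoonup \mu$ handles the measure term via \eqref{sola2}. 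One checks $v = \nabla\rho^{m/2}$ as the weak limit of $\nabla\rho_n^{m/2}$ exactly as in Section~\ref{S:Exist}. This produces a pair $(\rho,v)$ satisfying Definition~\ref{Weak-ks}, and the energy bound is inherited by weak lower semicontinuity of the norms.

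\textbf{Main obstacle.} The delicate point is the absorption of the coupling term $\int_{\Omega_T}|\nabla\phi|\,\rho_n\,|v_n|\,dxdt$ into the dissipation: one needs the parabolic Sobolev exponent for $\rho_n$ (which degrades as $m$ decreases) to be large enough, jointly with the Ladyzhenskaya-type control on $v_n$, that a Young's-inequality split leaves positive leftover dissipation on the left-hand side. This is precisely why the threshold $m>\tfrac23$ appears when $d=3$ (and no restriction when $d=2$): it is the borderline ensuring $\rho_n \in L^{q}(\Omega_T)$ for $q$ past the conjugate exponent needed to pair against $v_n \in L^2(0,T;L^6) \cap L^\infty(0,T;L^2) \hookrightarrow L^{10/3}(\Omega_T)$. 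A secondary technical nuisance is that the estimates of Section~\ref{S:apriori-est} were stated for the Dirichlet problem with zero initial data; here one must invoke Remarks~\ref{L1 rmk} and \ref{L2 rmk} to incorporate the genuine $L^1$ initial datum $\rho_0$, replacing $\mu(\Omega_T)$ by $\mu(\Omega_T)+\|\rho_0\|_{L^1(\Omega)}$ throughout.
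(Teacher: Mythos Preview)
Your proposal is correct and follows essentially the same approach as the paper: regularize, exploit that the divergence-free a priori estimate for $\rho$ (Theorem~\ref{T:final_est}(iii)) is independent of the drift, run the fluid energy estimate and absorb the coupling term $\int \rho|v|$ via interpolation and Young's inequality (this is exactly where the threshold $m>\tfrac23$ for $d=3$ enters), check that the resulting Leray--Hopf bounds place $v$ in the admissible drift class $\mathfrak{S}_{m,\sigma}$, and pass to the limit by Aubin--Lions. The paper carries out the interpolation for the cross term with explicit exponents (pairing $\rho\in L^{6/5}$ against $v\in L^6$ when $d=3$, and choosing a large $p$ when $d=2$), whereas you gesture at it, but the structure is identical.
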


\begin{proof}
We only explain the core a priori estimates for smooth solutions.  
The standard regularization and compactness argument producing a weak solution pair is exactly the same as in Section~\ref{S:Exist}, and is therefore omitted.

We set
$\nu:=\|\rho_0\|_{L^1(\Omega)}+\mu(\Omega_T)$.
Since $v$ is divergence-free, Theorem~\ref{T:divfree}, together with Remark~\ref{L0 rmk}, applies to the equation \eqref{KF-10}. In particular, for every $0<\alpha<2$,
\begin{equation}\label{app-rho-general}
\sup_{t\in(0,T)}\int_\Omega \rho(\cdot,t)\,dx
+
\int_{\Omega_T}\left|\nabla \rho^{\frac m2}\right|^\alpha\,dxdt
\le
C(\nu),
\end{equation}
where the constant is independent of the drift $v$.

We now estimate the fluid component.

\smallskip
\noindent
$\bullet$  \underline{\emph{(2d case)}} Assume that $d=2$ and $m>0$.
Choose
\(
\bar\alpha\in
\left(
\max\left\{1,\frac{2}{m+1}\right\},2
\right).
\)
Then \eqref{app-rho-general} yields
\begin{equation}\label{app-2d-rho}
\sup_{t\in(0,T)}\int_\Omega \rho(\cdot,t)\,dx
+
\int_{\Omega_T}\left|\nabla \rho^{\frac m2}\right|^{\bar\alpha}\,dxdt
\le
C(\nu).
\end{equation}

Testing \eqref{KF-20} against $v$, and using $\operatorname{div}v=0$, we obtain
\[
\frac12\frac{d}{dt}\|v\|_{L^2(\Omega)}^2+\|\nabla v\|_{L^2(\Omega)}^2
=
-\int_\Omega \rho\,\nabla\phi\cdot v\,dx
\le
\|\nabla\phi\|_{L^\infty}\|\rho\|_{L^q}\|v\|_{L^p},
\]
where
$\frac1p+\frac1q=1$.
By the two-dimensional Gagliardo--Nirenberg inequality,
\[
\|v\|_{L^p}
\le
C\|v\|_{L^2}^{\frac{2}{p}}\|\nabla v\|_{L^2}^{1-\frac{2}{p}}.
\]
Moreover, by interpolation between $L^1(\Omega)$ and the Sobolev exponent for $\rho^{m/2}$,
\[
\|\rho\|_{L^q}
\le
C
\|\rho\|_{L^1}^{\theta}
\|\nabla \rho^{\frac m2}\|_{L^{\bar\alpha}}^{\frac{2}{m}(1-\theta)},
\qquad
\theta=\frac{(m+q)\bar\alpha-2q}{q(\bar\alpha(m+1)-2)}.
\]
Substituting these bounds into the energy inequality for $v$ and applying Young's inequality, we obtain
\[
\frac{d}{dt}\|v\|_{L^2}^2+\|\nabla v\|_{L^2}^2
\le
\varepsilon \|\nabla \rho^{\frac m2}\|_{L^{\bar\alpha}}^{\bar\alpha}
+
C_\varepsilon\left(1+\|v\|_{L^2}^2\right),
\]
provided that $p>2$ is chosen sufficiently large. Since \eqref{app-2d-rho} controls
$\|\nabla \rho^{\frac m2}\|_{L^{\bar\alpha}(\Omega_T)}$,
Gronwall's inequality yields
\begin{equation}\label{app-2d-v}
\sup_{t\in(0,T)}\|v(\cdot,t)\|_{L^2(\Omega)}^2
+
\int_{\Omega_T}|\nabla v|^2\,dxdt
\le
C.
\end{equation}
Combining \eqref{app-2d-rho} and \eqref{app-2d-v}, and then using \eqref{app-rho-general} again for arbitrary $0<\alpha<2$, we conclude that
\[
\sup_{t\in(0,T)}\int_\Omega \left(\rho(\cdot,t)+|v(\cdot,t)|^2\right)\,dx
+
\int_{\Omega_T}\left(\left|\nabla \rho^{\frac m2}\right|^\alpha+|\nabla v|^2\right)\,dxdt
\le
C
\]
for every $0<\alpha<2$.

\smallskip
\noindent
$\bullet$ \underline{\emph{(3d case)}} Assume that $d=3$ and $m>\frac23$.
Choose
\(
\bar\alpha\in
\left(
\max\left\{1,\frac{8}{3m+2}\right\},2
\right).
\)
Then \eqref{app-rho-general} gives
\begin{equation}\label{app-3d-rho}
\sup_{t\in(0,T)}\int_\Omega \rho(\cdot,t)\,dx
+
\int_{\Omega_T}\left|\nabla \rho^{\frac m2}\right|^{\bar\alpha}\,dxdt
\le
C(\nu).
\end{equation}

Testing \eqref{KF-20} against $v$, and using $\operatorname{div}v=0$, we obtain
\[
\frac12\frac{d}{dt}\|v\|_{L^2(\Omega)}^2+\|\nabla v\|_{L^2(\Omega)}^2
=
-\int_\Omega \rho\,\nabla\phi\cdot v\,dx
\lesssim
\|\rho\|_{L^{6/5}}\|v\|_{L^6}
\le
C\|\rho\|_{L^{6/5}}^2+\frac12\|\nabla v\|_{L^2}^2.
\]
Hence, we get
\begin{equation}\label{app-3d-v1}
\frac12\frac{d}{dt}\|v\|_{L^2}^2+\|\nabla v\|_{L^2}^2
\lesssim
\|\rho\|_{L^{6/5}}^2.
\end{equation}

We next estimate $\|\rho\|_{L^{6/5}}$. By interpolation, together with the uniform $L^1$-bound on $\rho$,
\[
\|\rho\|_{L^{6/5}}^2
\le
\|\rho\|_{L^1}^{2\theta}
\|\rho\|_{L^{\frac{3m\bar\alpha}{2(3-\bar\alpha)}}}^{2(1-\theta)}
\lesssim
\left\|\rho^{\frac m2}\right\|_{L^{\frac{3\bar\alpha}{3-\bar\alpha}}}^{\frac{4(1-\theta)}{m}}
\lesssim
\left\|\nabla \rho^{\frac m2}\right\|_{L^{\bar\alpha}}^{\frac{4(1-\theta)}{m}},
\]
where
$\displaystyle
\theta=\frac{(5m+4)\bar\alpha-12}{2(\bar\alpha(3m+2)-6)}$.
A direct computation gives
\[
\frac{4(1-\theta)}{m}
=
\frac{2\bar\alpha}{\bar\alpha(3m+2)-6}.
\]
Therefore, we have
\begin{equation}\label{app-3d-v2}
\frac{d}{dt}\|v\|_{L^2}^2+\|\nabla v\|_{L^2}^2
\lesssim
\left\|\nabla \rho^{\frac m2}\right\|_{L^{\bar\alpha}}^{\frac{2\bar\alpha}{\bar\alpha(3m+2)-6}}.
\end{equation}
Since
\(
\bar\alpha>\frac{8}{3m+2},
\)
it follows that
\[
\frac{2\bar\alpha}{\bar\alpha(3m+2)-6}\le \bar\alpha.
\]
Combining \eqref{app-3d-v2} with \eqref{app-3d-rho}, we conclude that
\begin{equation}\label{app-3d-v}
\sup_{t\in(0,T)}\|v(\cdot,t)\|_{L^2(\Omega)}^2
+
\int_{\Omega_T}|\nabla v|^2\,dxdt
\le
C.
\end{equation}
Together with \eqref{app-rho-general}, this yields
\[
\sup_{t\in(0,T)}\int_\Omega \left(\rho(\cdot,t)+|v(\cdot,t)|^2\right)\,dx
+
\int_{\Omega_T}\left(\left|\nabla \rho^{\frac m2}\right|^\alpha+|\nabla v|^2\right)\,dxdt
\le
C
\]
for every $0<\alpha<2$.

Finally, the above estimates are stable under the standard regularization procedure, and the compactness argument from Section~\ref{S:Exist} yields a weak solution pair $(\rho,v)$ in the sense of Definition~\ref{Weak-ks}. This completes the proof.
\end{proof}






\begin{bibdiv}
\begin{biblist}

\bib{BedHe2017}{article}{
      author={Bedrossian, J.},
      author={He, S.},
       title={Suppression of blow-up in {P}atlak-{K}eller-{S}egel via shear
  flows},
        date={2017},
        ISSN={0036-1410,1095-7154},
     journal={SIAM J. Math. Anal.},
      volume={49},
      number={6},
       pages={4722\ndash 4766},
         url={https://doi.org/10.1137/16M1093380},
      review={\MR{3730537}},
}

\bib{BDG13}{article}{
      author={B\"{o}gelein, V.},
      author={Duzaar, F.},
      author={Gianazza, U.},
       title={Porous medium type equations with measure data and potential
  estimates},
        date={2013},
        ISSN={0036-1410},
     journal={SIAM J. Math. Anal.},
      volume={45},
      number={6},
       pages={3283\ndash 3330},
         url={https://doi.org/10.1137/130925323},
      review={\MR{3124895}},
}

\bib{BDG14}{article}{
      author={B\"{o}gelein, V.},
      author={Duzaar, F.},
      author={Gianazza, U.},
       title={Continuity estimates for porous medium type equations with
  measure data},
        date={2014},
        ISSN={0022-1236},
     journal={J. Funct. Anal.},
      volume={267},
      number={9},
       pages={3351\ndash 3396},
         url={https://doi.org/10.1016/j.jfa.2014.08.014},
      review={\MR{3261113}},
}

\bib{BDG15}{article}{
      author={B\"{o}gelein, V.},
      author={Duzaar, F.},
      author={Gianazza, U.},
       title={Very weak solutions of singular porous medium equations with
  measure data},
        date={2015},
        ISSN={1534-0392},
     journal={Commun. Pure Appl. Anal.},
      volume={14},
      number={1},
       pages={23\ndash 49},
         url={https://doi.org/10.3934/cpaa.2015.14.23},
      review={\MR{3299023}},
}

\bib{BDG16}{article}{
      author={B\"{o}gelein, V.},
      author={Duzaar, F.},
      author={Gianazza, U.},
       title={Sharp boundedness and continuity results for the singular porous
  medium equation},
        date={2016},
        ISSN={0021-2172},
     journal={Israel J. Math.},
      volume={214},
      number={1},
       pages={259\ndash 314},
         url={https://doi.org/10.1007/s11856-016-1360-3},
      review={\MR{3540615}},
}

\bib{CCY2019}{incollection}{
      author={Carrillo, J.~A.},
      author={Craig, K.},
      author={Yao, Y.},
       title={Aggregation-diffusion equations: dynamics, asymptotics, and
  singular limits},
        date={2019},
   booktitle={Active particles. {V}ol. 2. {A}dvances in theory, models, and
  applications},
      series={Model. Simul. Sci. Eng. Technol.},
   publisher={Birkh\"auser/Springer, Cham},
       pages={65\ndash 108},
      review={\MR{3932458}},
}

\bib{CHKK17}{article}{
      author={Chung, Y.~S.},
      author={Hwang, S.},
      author={Kang, K.},
      author={Kim, J.},
       title={H\"{o}lder continuity of {K}eller-{S}egel equations of porous
  medium type coupled to fluid equations},
        date={2017},
        ISSN={0022-0396},
     journal={J. Differential Equations},
      volume={263},
      number={4},
       pages={2157\ndash 2212},
         url={https://doi.org/10.1016/j.jde.2017.03.042},
      review={\MR{3650336}},
}

\bib{DK07}{book}{
      author={Daskalopoulos, P.},
      author={Kenig, C.~E.},
       title={Degenerate diffusions},
      series={EMS Tracts in Mathematics},
   publisher={European Mathematical Society (EMS), Z\"{u}rich},
        date={2007},
      volume={1},
        ISBN={978-3-03719-033-3},
         url={https://doi.org/10.4171/033},
        note={Initial value problems and local regularity theory},
      review={\MR{2338118}},
}

\bib{DB93}{book}{
      author={DiBenedetto, E.},
       title={Degenerate parabolic equations},
      series={Universitext},
   publisher={Springer-Verlag, New York},
        date={1993},
        ISBN={0-387-94020-0},
         url={https://doi.org/10.1007/978-1-4612-0895-2},
      review={\MR{1230384}},
}

\bib{DGV12}{book}{
      author={DiBenedetto, E.},
      author={Gianazza, U.},
      author={Vespri, V.},
       title={Harnack's inequality for degenerate and singular parabolic
  equations},
      series={Springer Monographs in Mathematics},
   publisher={Springer, New York},
        date={2012},
        ISBN={978-1-4614-1583-1},
         url={https://doi.org/10.1007/978-1-4614-1584-8},
      review={\MR{2865434}},
}

\bib{Freitag}{article}{
      author={Freitag, M.},
       title={Global existence and boundedness in a chemorepulsion system with
  superlinear diffusion},
        date={2018},
        ISSN={1078-0947},
     journal={Discrete Contin. Dyn. Syst.},
      volume={38},
      number={11},
       pages={5943\ndash 5961},
         url={https://doi.org/10.3934/dcds.2018258},
      review={\MR{3917794}},
}

\bib{HKK01}{article}{
      author={Hwang, S.},
      author={Kang, K.},
      author={Kim, H.~K.},
       title={Existence of weak solutions for porous medium equation with a
  divergence type of drift term},
        date={2023},
        ISSN={0944-2669},
     journal={Calc. Var. Partial Differential Equations},
      volume={62},
      number={4},
       pages={Paper No. 126},
         url={https://doi.org/10.1007/s00526-023-02451-4},
      review={\MR{4568176}},
}

\bib{HKK02}{article}{
      author={Hwang, S.},
      author={Kang, K.},
      author={Kim, H.~K.},
       title={Existence of weak solutions for porous medium equation with a
  divergence type of drift term in a bounded domain},
        date={2024},
        ISSN={0022-0396,1090-2732},
     journal={J. Differential Equations},
      volume={389},
       pages={361\ndash 414},
         url={https://doi.org/10.1016/j.jde.2024.01.028},
      review={\MR{4697984}},
}

\bib{HKK_FDE}{article}{
      author={Hwang, S.},
      author={Kang, K.},
      author={Kim, H.~K.},
       title={Existence of weak solutions for fast diffusion equation with a
  divergence type of drift term},
     journal={Preprint, arXiv:2501.09539 [math.AP]},
}

\bib{HZ21}{article}{
      author={Hwang, S.},
      author={Zhang, Y.~P.},
       title={Continuity results for degenerate diffusion equations with
  {$L_t^p L_x^q$} drifts},
        date={2021},
        ISSN={0362-546X},
     journal={Nonlinear Anal.},
      volume={211},
       pages={Paper No. 112413, 37},
         url={https://doi.org/10.1016/j.na.2021.112413},
      review={\MR{4265723}},
}

\bib{KK-SIMA}{article}{
      author={Kang, K.},
      author={Kim, H.~K.},
       title={Existence of weak solutions in {W}asserstein space for a
  chemotaxis model coupled to fluid equations},
        date={2017},
        ISSN={0036-1410},
     journal={SIAM J. Math. Anal.},
      volume={49},
      number={4},
       pages={2965\ndash 3004},
         url={https://doi.org/10.1137/16M1083232},
      review={\MR{3682182}},
}

\bib{KZ18}{article}{
      author={Kim, I.},
      author={Zhang, Y.~P.},
       title={Regularity properties of degenerate diffusion equations with
  drifts},
        date={2018},
        ISSN={0036-1410},
     journal={SIAM J. Math. Anal.},
      volume={50},
      number={4},
       pages={4371\ndash 4406},
         url={https://doi.org/10.1137/17M1159749},
      review={\MR{3842921}},
}

\bib{KZ2021}{article}{
      author={Kim, I.},
      author={Zhang, Y.~P.},
       title={Porous medium equation with a drift: free boundary regularity},
        date={2021},
        ISSN={0003-9527,1432-0673},
     journal={Arch. Ration. Mech. Anal.},
      volume={242},
      number={2},
       pages={1177\ndash 1228},
         url={https://doi.org/10.1007/s00205-021-01702-y},
      review={\MR{4331024}},
}

\bib{KLLP19}{article}{
      author={Kinnunen, J.},
      author={Lehtel\"{a}, P.},
      author={Lindqvist, P.},
      author={Parviainen, M.},
       title={Supercaloric functions for the porous medium equation},
        date={2019},
        ISSN={1424-3199},
     journal={J. Evol. Equ.},
      volume={19},
      number={1},
       pages={249\ndash 270},
         url={https://doi.org/10.1007/s00028-018-0474-y},
      review={\MR{3918522}},
}

\bib{LS13}{article}{
      author={Liskevich, V.},
      author={Skrypnik, I.~I.},
       title={Pointwise estimates for solutions to the porous medium equation
  with measure as a forcing term},
        date={2013},
        ISSN={0021-2172},
     journal={Israel J. Math.},
      volume={194},
      number={1},
       pages={259\ndash 275},
         url={https://doi.org/10.1007/s11856-012-0098-9},
      review={\MR{3047070}},
}

\bib{Luk10}{article}{
      author={Lukkari, T.},
       title={The porous medium equation with measure data},
        date={2010},
        ISSN={1424-3199},
     journal={J. Evol. Equ.},
      volume={10},
      number={3},
       pages={711\ndash 729},
         url={https://doi.org/10.1007/s00028-010-0067-x},
      review={\MR{2674065}},
}

\bib{Luk12}{article}{
      author={Lukkari, T.},
       title={The fast diffusion equation with measure data},
        date={2012},
        ISSN={1021-9722},
     journal={NoDEA Nonlinear Differential Equations Appl.},
      volume={19},
      number={3},
       pages={329\ndash 343},
         url={https://doi.org/10.1007/s00030-011-0131-4},
      review={\MR{2926301}},
}

\bib{SSSZ}{article}{
      author={Seregin, G.},
      author={Silvestre, L.},
      author={\v{S}ver\'{a}k, V.},
      author={Zlato\v{s}, A.},
       title={On divergence-free drifts},
        date={2012},
        ISSN={0022-0396},
     journal={J. Differential Equations},
      volume={252},
      number={1},
       pages={505\ndash 540},
         url={https://doi.org/10.1016/j.jde.2011.08.039},
      review={\MR{2852216}},
}

\bib{SVZ13}{article}{
      author={Silvestre, L.},
      author={Vicol, V.},
      author={Zlato\v{s}, A.},
       title={On the loss of continuity for super-critical drift-diffusion
  equations},
        date={2013},
        ISSN={0003-9527},
     journal={Arch. Ration. Mech. Anal.},
      volume={207},
      number={3},
       pages={845\ndash 877},
         url={https://doi.org/10.1007/s00205-012-0579-3},
      review={\MR{3017289}},
}

\bib{Sim87}{article}{
      author={Simon, J.},
       title={Compact sets in the space {$L^p(0,T;B)$}},
        date={1987},
        ISSN={0003-4622},
     journal={Ann. Mat. Pura Appl. (4)},
      volume={146},
       pages={65\ndash 96},
         url={https://doi.org/10.1007/BF01762360},
      review={\MR{916688}},
}

\bib{Stu15}{article}{
      author={Sturm, S.},
       title={Pointwise estimates for porous medium type equations with low
  order terms and measure data},
        date={2015},
     journal={Electron. J. Differential Equations},
       pages={No. 101, 25 pp},
      review={\MR{3358473}},
}

\bib{Vaz06}{book}{
      author={V\'{a}zquez, J.~L.},
       title={Smoothing and decay estimates for nonlinear diffusion equations},
      series={Oxford Lecture Series in Mathematics and its Applications},
   publisher={Oxford University Press, Oxford},
        date={2006},
      volume={33},
        ISBN={978-0-19-920297-3; 0-19-920297-4},
         url={https://doi.org/10.1093/acprof:oso/9780199202973.001.0001},
        note={Equations of porous medium type},
      review={\MR{2282669}},
}

\bib{Vaz07}{book}{
      author={V\'{a}zquez, J.~L.},
       title={The porous medium equation},
      series={Oxford Mathematical Monographs},
   publisher={The Clarendon Press, Oxford University Press, Oxford},
        date={2007},
        ISBN={978-0-19-856903-9; 0-19-856903-3},
        note={Mathematical theory},
      review={\MR{2286292}},
}

\bib{Win15}{article}{
      author={Winkler, M.},
       title={Boundedness and large time behavior in a three-dimensional
  chemotaxis-{S}tokes system with nonlinear diffusion and general sensitivity},
        date={2015},
        ISSN={0944-2669},
     journal={Calc. Var. Partial Differential Equations},
      volume={54},
      number={4},
       pages={3789\ndash 3828},
         url={https://doi.org/10.1007/s00526-015-0922-2},
      review={\MR{3426095}},
}

\bib{Z11}{article}{
      author={Zlato\v{s}, A.},
       title={Reaction-diffusion front speed enhancement by flows},
        date={2011},
        ISSN={0294-1449},
     journal={Ann. Inst. H. Poincar\'{e} C Anal. Non Lin\'{e}aire},
      volume={28},
      number={5},
       pages={711\ndash 726},
         url={https://doi.org/10.1016/j.anihpc.2011.05.004},
      review={\MR{2838397}},
}

\end{biblist}
\end{bibdiv}

\end{document}